\newcommand{\del}[1]{\frac{\partial}{\partial #1}}
\newcommand{\indel}[1]{\partial/\partial #1}
\newtheorem{theorem}{Theorem}%[section]
\newtheorem{lemma}[theorem]{Lemma}
\newtheorem{proposition}[theorem]{Proposition}
\theoremstyle{remark}
\newtheorem{remark}[theorem]{Remark}
\newtheorem{example}[theorem]{Example}
\newtheorem{definition}[theorem]{Definition}
\title{Quasihomogeneous analytic  affine connections on surfaces}
\author{Sorin Dumitrescu}\address{Universit\'e Nice-Sophia Antipolis, Laboratoire J.-A. Dieudonn\'e, UMR 7351 CNRS, Parc Valrose, 06108 Nice Cedex 2, France} \email{dumitres@unice.fr}
\author{Adolfo Guillot}\address{Instituto de Matem\'aticas, Unidad Cuernavaca, Universidad Nacional Aut\'onoma de M\'exico,  A.P.~273-3 Admon.~3, Cuernavaca, Morelos, 62251, Mexico} \email{adolfo.guillot@im.unam.mx}
\thanks{The first author  was  partially supported by  the ANR Grant 08-JCJC-0130-01. The second author was partially supported by CONACyT-Mexico  grant 167594 and enjoyed the hospitality of the Universit\'e  Nice-Sophia Antipolis while preparing this work.}
\keywords{Affine connections, transitive Killing Lie algebras, normal forms.}
\thanks{MSC 2010: 53A15, 53C23, 57S99}
\begin{document}
\begin{abstract} 
We classify torsion-free real-analytic affine connections on  compact oriented  real-analytic surfaces which are locally homogeneous on a nontrivial open set, without being locally homogeneous on all of the surface. In particular, we prove that such connections exist. This classification relies in a local result that classifies germs of torsion-free real-analytic affine connections on a neighborhood of the origin in the plane  which are quasihomogeneous, in the sense that they are locally homogeneous on an open set containing the origin in its closure, but not locally homogeneous in the neighborhood of the origin. 
\end{abstract}

\maketitle

\section{Introduction}

This article deals with the classification of torsion-free real-analytic affine connections on compact orientable surfaces which are \emph{locally homogeneous} on a nontrivial open set, but not on all of the surface. They are here called \emph{quasihomogeneous}. Our main results are  stated  in  Theorem~\ref{thm-local}  (the local classification) and in Theorem~\ref{thm-global} (the global classification). In particular, we show that such (strictly) quasihomogeneous connections exist.

%In~\cite{Mos}, Mostow gave  modern proofs for Lie's classification of locally homogenous surfaces (see also the more recent~\cite{Olver}).

The study of \emph{locally homogeneous} geometric structures is a classical subject in differential geometry. It has its roots in the seminal work of Lie~\cite{Lie} and took shape through Ehresmann's  article~\cite{ehresmann}. In the Riemannian setting, these locally homogeneous spaces are the context of Thurston's 3-dimensional geometrization program~\cite{thurston}. Locally homogeneous torsion-free connections on surfaces were studied by Opozda in~\cite{Opozda}. In~\cite{BMM}, answering a question of  Lie, the authors classified Riemannian metrics on surfaces whose underlying Levi-Civita connections are projectively locally homogeneous. After the locally homogeneous geometric structures, the most symmetric geometric structures are those that are \emph{quasihomogeneous}, this is, those that are locally homogeneous in an open and dense set. These quasihomogeneous connections are the object of our article.

Our results are also motivated by Gromov's \emph{open-dense orbit theorem}~\cite{DG,Gro} (see also~\cite{Benoist2,CQ,Feres}). Gromov's result asserts that, if the pseudogroup of local automorphisms of  a \emph{rigid geometric structure} (an analytic connection, for example) acts with a dense orbit, then this orbit is open.  In this case, the geometric structure is locally homogeneous on a open dense set. Gromov's theorem says nothing about this maximal open and dense set of local homogeneity. In many interesting geometric situations, it may be all of the (connected)  manifold. This was proved, for instance,  for Anosov flows with differentiable stable and unstable foliations and  transverse contact  structure~\cite{BFL} and for three-dimensional compact Lorentz manifolds admitting a nonproper one-parameter group of automorphisms~\cite{Zeghib}.% In these papers, the authors make use of the nontrivial dynamics of the  automorphism group of  the geometric structure.

In~\cite{BF}, the authors deal with this question  and their results indicate ways in which some rigid geometric structures cannot degenerate off the open dense set. In~\cite{Fisher-zimmer}, Fisher conjectured that the maximal open set of local homogeneity is all of the manifold as soon as the following three conditions are fulfilled: the automorphism group of the manifold acts with a  dense orbit, the geometric structure is a $G$-structure  (meaning that it is locally homogeneous at the first order) and the manifold is compact.

Surprisingly, for some specific geometric structures, if the subset of local homogeneity is not empty, a maximal open set of local homogeneity is all of the (connected) manifold,  \emph{even if we drop the assumption  that  the automorphism group acts with a dense orbit}. This  is known to be true in the Riemannian setting~\cite{Tri}, as a consequence of the fact that all scalar invariants are constant (see also section 3 in~\cite{sorin-locally}). This was also proved for three dimensional real-analytic Lorentz metrics~\cite{Dumitrescu} and, in higher dimension, for complete  real-analytic Lorentz metrics having semisimple Killing Lie algebra~\cite{Melnick}. 
In~\cite{sorin-locally}, the first author proved that a real-analytic unimodular affine connection on a real-analytic compact surface which is locally homogeneous on a nontrivial open set is locally homogeneous on all of the surface, and asks about the extent to which the unimodularity hypothesis is necessary.

Motivated by these results,  Theorem~\ref{thm-global} constructs and characterizes torsion-free real-analytic affine connections on compact surfaces which are quasihomogeneous  (but not homogeneous). %Even if we cannot say that a real analytic connection on a compact surface which is locally homogeneous somewhere is locally homogeneous everywhere, we fully understand the connections that do not satisfy this property. 

The main local ingredient of  Theorem~\ref{thm-global}  is the  following classification of  germs of torsion-free real-analytic affine connections which are quasihomogeneous. It is, in some sense, the quasihomogeneous analogue to the local results in~\cite{koo}.

\begin{theorem}\label{thm-local} Let~$\nabla$ be a torsion-free  real-analytic affine connection in a neighborhood of the origin in~$\mathbf{R}^2$. Suppose that the maximal  open set  where  $\mathfrak{K}(\nabla)$, the Lie algebra of Killing vector fields of~$\nabla$, is transitive, contains the origin in its closure,  but does not contain the origin. Then, up to an analytic change of coordinates, the germ of~$\nabla$ at the origin is one of the following:

\begin{description}

\item[Type~$\mathrm{I}(n)$, $n\in\frac{1}{2}\mathbf{Z}$, $n \geq \frac{1}{2}$] The germ at~$(0,0)$ of
\begin{equation*}
\nabla_{\del{x}}\del{x}= 0,\;
\nabla_{\del{x}}\del{y}= -\gamma x^{n}\del{x},\;
 \nabla_{\del{y}}\del{y}=-{\textstyle \frac{1}{n}}\epsilon x^{2n+1}\del{x}-\phi x^{n}\del{y},
 \end{equation*}
with $\phi=0$ and~$\gamma=0$, if~$n\notin\mathbf{Z}$ and $(n,\phi,\epsilon)\neq (1,-\gamma,-\gamma^2)$. For these, $\mathfrak{K}(\nabla)=\langle x\indel{x}-ny\indel{y},\indel{y}\rangle$. 

\item[Type~$\mathrm{II}(n)$, $n\in\frac{1}{2}\mathbf{Z}$, $n \geq \frac{5}{2}$] The germ at~$(0,0)$---Type~$\mathrm{II}^0(n)$---or the germ at~$(0,1)$---Type~$\mathrm{II}^1(n)$---of 
\begin{multline*}
\nabla_{\del{x}}\del{x}=\left(-{\textstyle \frac{1}{n}}\epsilon x^{2n-3}y^2+2\gamma x^{n-2}y\right)\del{x}+\left(-{\textstyle \frac{1}{n}}\epsilon x^{2n-4}y^3+[2\gamma-\phi]x^{n-3}y^2\right)\del{y},\\
 \nabla_{\del{x}}\del{y}=\left({\textstyle \frac{1}{n}}\epsilon x^{2n-2}y-\gamma x^{n-1}\right)\del{x}+\left({\textstyle \frac{1}{n}}\epsilon x^{2n-3}y^2+[\phi-\gamma]x^{n-2}y\right)\del{y},\\
 \nabla_{\del{y}}\del{y}=-{\textstyle \frac{1}{n}}\epsilon x^{2n-1}\del{x}-\left({\textstyle \frac{1}{n}}\epsilon x^{2n-2}y+\phi x^{n-1}\right)\del{y},
\end{multline*}
with $\phi=0$ and~$\gamma=0$, if~$n\notin\mathbf{Z}$. For these, $\mathfrak{K}(\nabla)=\langle x\indel{x}+(1-n)y\indel{y},x\indel{y}\rangle $.  
\item[Type~$\mathrm{III}$] The germ at~$(0,0)$ of
\begin{multline*}
\nabla_{\del{x}}\del{x}=\left(-\textstyle{\frac{1}{2}}\epsilon xy^2+2\gamma y\right)\del{x}-{\textstyle\frac{1}{2}}\epsilon y^3\del{y},\\
 \nabla_{\del{x}}\del{y}=\left(\textstyle{\frac{1}{2}}\epsilon x^{2}y-\gamma x\right)\del{x}+\left(\textstyle{\frac{1}{2}}\epsilon xy^2+\gamma  y\right)\del{y},\\
 \nabla_{\del{y}}\del{y}=-{\textstyle\frac{1}{2}}\epsilon x^{3}\del{x}-\left(\textstyle{\frac{1}{2}}\epsilon x^{2}y+2\gamma x \right)\del{y},
\end{multline*}
for which~$\mathfrak{K}(\nabla)$ is the Lie algebra of divergence-free linear vector fields in $\mathbf{R}^2$.
\end{description}
In Types~$\mathrm{I}$ and~$\mathrm{II}$, $(\gamma,\phi,\epsilon) \in\mathbf{R}^3 \setminus \{(0,0,0) \}$ and the connection with parameters~$(\gamma,\phi,\epsilon)$ is equivalent to the one with parameters~$(\mu\gamma,\mu\phi,\mu^2\epsilon)$, for~$\mu>0$ in the case of Type~$\mathrm{II}^1$ and, for~$\mu\in\mathbf{R}^*$, in  the other cases. In Type~$\mathrm{III}$, $(\gamma,\epsilon)\in\mathbf{R}^2 \setminus \{ (0,0)\}$ and the connection with parameters~$(\gamma,\epsilon)$ is equivalent to the one with parameters~$(\mu\gamma,\mu^2\epsilon)$ for~$\mu\in\mathbf{R}^*$. Apart from this, all the above connections are inequivalent.

\end{theorem}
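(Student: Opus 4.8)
The plan is to decouple the problem into two stages: first determine $\mathfrak{K}(\nabla)$ as a germ of a Lie algebra of analytic vector fields at the origin, together with the locus where it degenerates, and only then recover $\nabla$ by solving its (linear) invariance equations. The starting observation is that the local Killing fields, defined and transitive on the maximal homogeneity set~$U$, are analytic solutions of the Killing system, a linear system with analytic coefficients near the origin; hence they extend analytically across the origin. Thus $\mathfrak{g} := \mathfrak{K}(\nabla)$ is a finite-dimensional Lie algebra of germs of analytic vector fields at~$0$, transitive on~$U$ but with the origin lying in the singular set $S = \{p : \dim \mathfrak{g}(p) < 2\}$; by hypothesis $0 \in \overline U \cap S$ while $0 \notin U$.

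On~$U$ the pair $(\nabla,\mathfrak{g})$ is a locally homogeneous torsion-free connection carrying its full transitive Killing algebra. The isotropy subalgebra at a point of~$U$ embeds in~$\mathfrak{gl}_2$ and must preserve the curvature data of~$\nabla$; since $\nabla$ is not homogeneous at~$0$ it cannot be one of the maximally symmetric (in particular flat) models, so I expect $\dim\mathfrak{g}\in\{2,3\}$. This yields a dichotomy. When $\dim\mathfrak{g}=3$ the isotropy is one-dimensional and $S$ is the single fixed point~$0$; here $\mathfrak{g}\cong\mathfrak{sl}(2,\mathbf{R})$, and since a semisimple algebra action with a fixed point is linearizable, the action is conjugate to the standard one by divergence-free linear fields, giving the Type~$\mathrm{III}$ normal form for~$\mathfrak{g}$. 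When $\dim\mathfrak{g}=2$ the set~$S$ is a curve; normalizing it to $\{x=0\}$ and studying the linear part of the isotropy on the normal direction, the weight of this action is the parameter~$n$, and two normal forms arise according to whether the tangential generator vanishes on~$S$ (giving $\langle x\indel{x}+(1-n)y\indel{y},x\indel{y}\rangle$, Type~$\mathrm{II}$) or not (giving $\langle x\indel{x}-ny\indel{y},\indel{y}\rangle$, Type~$\mathrm{I}$).

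With $\mathfrak{g}$ fixed in normal form, I would recover $\nabla$ by imposing $L_X\nabla=0$ for each generator~$X$ on the unknown Christoffel symbols $\Gamma^k_{ij}(x,y)$. Invariance under the scaling generator forces each $\Gamma^k_{ij}$ to be quasihomogeneous of a prescribed weight, i.e.\ a monomial in~$x$ with the exponents displayed in the statement, while invariance under the second generator pins down the coefficients up to the finitely many constants $(\gamma,\phi,\epsilon)$. The decisive point---and the main obstacle---is analyticity across $S=\{x=0\}$: on~$U$ the quasihomogeneity only produces $\Gamma$'s that are \emph{a priori} rational or ramified in~$x$, and requiring them to extend analytically to the origin amounts to solving the indicial equation of the invariance system and discarding the resonant or non-integral solutions. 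This selection is exactly what forces the arithmetic constraints $n\in\frac{1}{2}\mathbf{Z}$ with the sharp bounds $n\geq\frac{1}{2}$ (Type~$\mathrm{I}$) and $n\geq\frac{5}{2}$ (Type~$\mathrm{II}$), the vanishing $\phi=\gamma=0$ when $n\notin\mathbf{Z}$, and the isolation of the exceptional triple $(n,\phi,\epsilon)=(1,-\gamma,-\gamma^2)$. One must simultaneously verify that the resulting connection is genuinely \emph{not} homogeneous at~$0$, so that it is strictly quasihomogeneous, which is where the condition $(\gamma,\phi,\epsilon)\neq(0,0,0)$ enters.

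Finally, the equivalences and inequivalences follow by computing the normalizer of~$\mathfrak{g}$ in the pseudogroup of germs of diffeomorphisms preserving the normal form of~$\mathfrak{g}$ (equivalently, fixing~$S$) and letting it act on parameter space. The diagonal scalings in this normalizer produce the stated relations $(\gamma,\phi,\epsilon)\sim(\mu\gamma,\mu\phi,\mu^2\epsilon)$ and $(\gamma,\epsilon)\sim(\mu\gamma,\mu^2\epsilon)$, with the sign of~$\mu$ constrained to $\mu>0$ in Type~$\mathrm{II}^1$ because there the basepoint $(0,1)$ lies off~$S$ and the admissible changes must preserve its side, and with $\mu\in\mathbf{R}^*$ otherwise. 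Inequivalence of connections from distinct types, or with parameters in distinct scaling orbits, is then read off from discrete invariants manifestly preserved by analytic changes of coordinates: the isomorphism type of~$\mathfrak{g}$, the dimension and geometry of its singular set~$S$, and the scalar curvature invariants of~$\nabla$. Checking that every connection on the list is indeed quasihomogeneous---locally homogeneous precisely on $U=\{x\neq0\}$ (respectively $\mathbf{R}^2\setminus\{0\}$)---closes the classification.
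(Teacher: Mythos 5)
Your overall architecture --- first normalize the germ of $\mathfrak{K}(\nabla)$ at the origin, then recover $\nabla$ from the analyticity of the Christoffel symbols across the degeneracy locus, then act by the normalizer of $\mathfrak{K}(\nabla)$ to sort out equivalences --- is essentially the paper's, and the third and fourth stages of your sketch match what is actually done. But the first stage has a genuine gap: when $\dim\mathfrak{K}(\nabla)=2$ you pass directly to the two non-abelian normal forms $\langle x\indel{x}-ny\indel{y},\indel{y}\rangle$ and $\langle x\indel{x}+(1-n)y\indel{y},x\indel{y}\rangle$, silently assuming the algebra is isomorphic to $\mathfrak{aff}(\mathbf{R})$. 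A two-dimensional \emph{abelian} algebra of analytic vector fields can perfectly well have rank two off a curve and rank one on it (e.g.\ $\langle x\indel{x},\indel{y}\rangle$), so this case cannot be excluded on Lie-algebraic grounds; one must prove that an analytic torsion-free connection admitting such a commuting pair is automatically locally homogeneous at the origin. This is a substantive computation (the paper's Proposition~\ref{propcom}, which exhibits an extra Killing field or shows flatness in each subcase), and without it your list could be missing types. Relatedly, your assertions that $\dim\mathfrak{K}(\nabla)\in\{2,3\}$ and that dimension three forces $\mathfrak{sl}(2,\mathbf{R})$ are unjustified: a priori the isotropy at the origin can have dimension up to four, and three-dimensional solvable algebras are not ruled out by anything you say. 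The paper avoids this by first extracting (Lemma~\ref{dim2}) a two-dimensional subalgebra transitive off the origin, and only identifies the full Killing algebra a posteriori by solving the Killing equations (Proposition~\ref{typei-ii}).

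Two further points. The normalization of the degenerate pair $(A,B)$ with $[A,B]=B$ is itself nontrivial: one must linearize the isotropy (exponential coordinates), split into diagonalizable and non-diagonalizable cases, and use a Briot--Bouquet argument to kill the higher-order terms of the non-vanishing generator; the non-diagonalizable isotropy produces only flat connections and must be discarded by computation, not by inspection of ``the linear part on the normal direction.'' Finally, the exclusion of the triple $(n,\phi,\epsilon)=(1,-\gamma,-\gamma^2)$ in Type~$\mathrm{I}$ is not an analyticity/indicial phenomenon as you suggest: for those parameters the connection is perfectly analytic but acquires a third Killing field, so $\mathfrak{K}(\nabla)\cong\mathfrak{sl}(2,\mathbf{R})$ and the connection becomes locally homogeneous at the origin; it is excluded because it violates the quasihomogeneity hypothesis, which again requires computing the full solution space of the Killing system rather than only the two-dimensional subalgebra you start from.
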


\begin{remark}
For the connections of Type~$\mathrm{III}$, $\mathfrak{K}(\nabla)\approx\mathfrak{sl}(2,\mathbf{R})$ and there is one two-dimensional orbit of the Killing algebra (the complement of the origin). In the other cases, $\mathfrak{K}(\nabla)\approx\mathfrak{aff}(\mathbf{R})$, the Lie algebra of the affine group of the real line, and the components of the complement of the geodesic~$\{x=0\}$ in $\mathbf{R}^2$  are the two-dimensional orbits of the Killing algebra. In particular, all these germs admit nontrivial open sets on which $\nabla$ is locally isomorphic to a translation invariant connexion on the connected component
of the affine group of the real line. \emph{The closed set where $\nabla$ is not locally homogeneous is either a geodesic, or a point.} Moreover, in all cases, every vector field in the  Killing algebra $\mathfrak{K}(\nabla)$ is an affine one and thus these Killing Lie algebras also preserve a flat torsion-free affine connection.
\end{remark}

Our global result is the following one:

\begin{theorem}\label{thm-global}  \begin{enumerate}\item\label{main-1st} For integers~$n_1,n_2$, with $n_2\geq n_1\geq 2$, there exists a unique (up to automorphism) real-analytic torsion-free  affine connection~$\nabla_{n_1,n_2}$ on~$\mathbf{R}^2$ such that
\begin{enumerate}
\item $\nabla_{n_1,n_2}$ is locally homogeneous on a  nontrivial open set, but not on all of $\mathbf{R}^2$. For $i=1,2$, there exists a point~$p_i\in\mathbf{R}^2$ such that~$\nabla_{n_1,n_2}$ is, in a neighborhood of~$p_i$, given by a normal form of type~$\mathrm{II}^1(n_i)$, if~$n_i$ is odd and by a normal form of type~$\mathrm{I}(n_i)$, if~$n_i$ is even (in particular, the Killing Lie algebra of~$\nabla_{n_1,n_2}$ is isomorphic to that of the affine group of the real line). 
\item There exist groups of automorphisms  of~$\nabla_{n_1,n_2}$ acting freely, properly discontinuously and cocompactly on~$\mathbf{R}^2$.
\end{enumerate}
\item\label{main-rec}Let~$S$ be a compact orientable analytic surface endowed with a real-analytic torsion-free affine connection that is locally homogeneous on some nontrivial open set, but not on all of $S$. Then~$(S,\nabla)$ is isomorphic to a quotient of~$(\mathbf{R}^2,\nabla_{n_1,n_2})$.

\item\label{main-mod} The moduli space of compact quotients of~$(\mathbf{R}^2,\nabla_{n_1,n_2})$ is~$\Xi=\mathbf{N}\times\mathbf{R}\times\mathbf{R}/\mathbf{Z}$. Every compact quotient of~$(\mathbf{R}^2,\nabla_{n_1,n_2})$  is a torus. For the torus $T$ corresponding to~$(k,\tau,\theta)\in\Xi$ we have:
\begin{enumerate}
\item The open set of local homogeneity is dense and is a union of~$2k$ (if~$n_1\neq n_2$) or~$k$ (if~$n_1=n_2$) cylinders bounded by  simple closed geodesics.
\item There exists a globally defined Killing field~$A$ on~$T$, unique up to multiplication by a constant. When normalized  such that there exists a Killing vector field~$B$ defined in some open subset such that~$[A,B]=B$,  $A$ is periodic with period~$\tau$. 
\item If~$\gamma_1$ and~$\gamma_2$ are generators of the fundamental group of~$T$ and~$\gamma_1$ is homotopic to an orbit of~$A$, then the analytic continuations of~$B$ along~$\gamma_1$ and~$\gamma_2$ are, respectively, $e^\tau B$ and~$e^{(\theta+k)\tau}B$.
\end{enumerate}
\end{enumerate}

\end{theorem}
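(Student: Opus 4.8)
The plan is to argue in two directions: to reduce an arbitrary compact example to the normal forms of Theorem~\ref{thm-local} along its non-homogeneous locus, and then to reconstruct the model $\nabla_{n_1,n_2}$ on $\mathbf{R}^2$ from its monodromy data, so that the same analysis yields existence, uniqueness, cocompactness and the moduli count at once. Let $(S,\nabla)$ be compact, oriented and quasihomogeneous. First I would show that the open set $U$ of local homogeneity is dense, using the analytic version of Gromov's open-dense orbit theorem; its complement $\Sigma=S\setminus U$ is then a nonempty, nowhere-dense, closed, $\nabla$-invariant set, and every point of $\Sigma$ lies in $\overline U\setminus U$. Theorem~\ref{thm-local} therefore applies at each point of $\Sigma$ and presents the germ there as one of Types~$\mathrm{I}$, $\mathrm{II}$, $\mathrm{III}$. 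By the Remark, a point of Type~$\mathrm{I}$ or~$\mathrm{II}$ has Killing algebra $\mathfrak{aff}(\mathbf{R})$ and lies on a geodesic of $\Sigma$, whereas a point of Type~$\mathrm{III}$ is an isolated point of $\Sigma$ with Killing algebra $\mathfrak{sl}(2,\mathbf{R})$. The first real step is to exclude Type~$\mathrm{III}$: I would show that such a germ forces a neighborhood of the isolated point to be modeled on the single $\mathfrak{sl}(2,\mathbf{R})$-orbit $\mathbf{R}^2\setminus\{0\}$, and that the resulting developing map cannot be completed into a compact surface with the prescribed holonomy. Consequently $\Sigma$ is a finite disjoint union of simple closed geodesics and $\mathfrak{K}(\nabla)\cong\mathfrak{aff}(\mathbf{R})$ everywhere.

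The heart of the argument is the monodromy analysis on $U$. Choosing local generators $A,B$ with $[A,B]=B$ and continuing them analytically over $S$, the monodromy lands in $\mathrm{Aut}(\mathfrak{aff}(\mathbf{R}))$, which preserves the derived line $\mathbf{R}B$ and fixes $A$ modulo $\mathbf{R}B$; hence $B$ is acted on by a multiplicative character $\gamma\mapsto e^{c(\gamma)}$ and $A$ descends to a globally defined Killing field on $S$, unique up to scale and normalized by $[A,B]=B$. Since $\mathrm{ad}_A B=B$, the time-$t$ flow of $A$ scales $B$ by $e^{t}$, which both identifies the period $\tau$ of $A$ with the value of $c$ on an $A$-orbit and gives the transformation rules of the recognition statement. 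I would then use the flow of $A$ together with the transverse geodesic direction to build a global developing map and, using completeness of the flows, show it is a diffeomorphism of the universal cover onto $\mathbf{R}^2$ carrying $\Sigma$ to a family of parallel geodesics $\{x=j\}$, with each intervening strip supporting a normal form of Type~$\mathrm{I}$ or~$\mathrm{II}$. Completeness of the $A$-flow and the requirement that each geodesic be a closed $A$-invariant curve in the compact quotient force the index of each strip to be an integer $n\geq 2$; orientation of $S$ selects Type~$\mathrm{II}^1(n)$ for $n$ odd and Type~$\mathrm{I}(n)$ for $n$ even; and matching the developing map across successive geodesics forces the indices to alternate between two values $n_1\leq n_2$. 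This produces $\nabla_{n_1,n_2}$ on $\mathbf{R}^2$ and proves it is the universal cover of every compact example.

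For existence I would construct $\nabla_{n_1,n_2}$ directly, gluing the half-plane pieces of the relevant normal forms along the geodesics $\{x=j\}$ and checking that the glued connection is real-analytic and admits $A$ as a global Killing field. Cocompact automorphism groups are then generated by the time-$\tau$ flow of $A$ and a transverse automorphism translating by two geodesics (by one when $n_1=n_2$) while scaling $B$ by the accumulated character value; a suitable rank-two lattice of such maps acts freely, properly discontinuously and cocompactly. Because the automorphism group is solvable, the resulting torsion-free deck group is virtually abelian, and the only closed oriented surface with such a fundamental group is the torus, which together with the previous paragraph proves the existence, recognition and uniqueness statements.

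Finally, the moduli statement collects the invariants produced above: the discrete $k\in\mathbf{N}$ counting how many fundamental strips the quotient contains (hence $2k$ or $k$ cylinders), the period $\tau\in\mathbf{R}$ of the normalized $A$, and an offset $\theta\in\mathbf{R}/\mathbf{Z}$ coming from the freedom in the choice of transverse generator. Evaluating the character $c$ on generators $\gamma_1,\gamma_2$ of $\pi_1$, with $\gamma_1$ homotopic to an $A$-orbit, gives the values $e^{\tau}$ and $e^{(\theta+k)\tau}$; I would then check that these data range exactly over $\Xi=\mathbf{N}\times\mathbf{R}\times\mathbf{R}/\mathbf{Z}$ and classify the quotients up to isomorphism. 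The main obstacle, which I expect to occupy most of the work, is precisely the monodromy matching of the second paragraph: proving the integrality $n\geq 2$ and the exact alternation $n_1,n_2$ of the indices, and computing how each geodesic crossing contributes to the character $c$, since this is what ties the local normal forms to the global invariants $\tau$ and $\theta$ and underlies the entire moduli description.
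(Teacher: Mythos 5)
Your proposal follows the same broad architecture as the paper (normal forms along the degeneracy locus, exclusion of the $\mathfrak{sl}(2,\mathbf{R})$ case, per-cylinder homogeneous models, gluing, and the moduli data $(k,\tau,\theta)$), but the step you yourself call the heart of the argument contains a non sequitur that hides exactly the paper's main technical work. An automorphism of $\mathfrak{aff}(\mathbf{R})$ acts by $A\mapsto A+\lambda B$, $B\mapsto\mu B$; fixing $A$ \emph{modulo} $\mathbf{R}B$ therefore does not make $A$ monodromy-invariant, and a globally defined Killing field $A$ exists only if the holonomy can be conjugated into the semisimple one-parameter group --- equivalently, only after trivial and unipotent holonomy have been excluded on each cylinder of the homogeneity locus. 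If the holonomy of a cylinder were unipotent, of the form $(1,\tau)$, it would be $B$, not $A$, that is globally defined, and everything downstream in your argument (the global $A$, its period $\tau$, the character computation, the statement of item~(3b) itself) collapses. The paper devotes the core of Section~\ref{Quasi-compact-surfaces} precisely to this exclusion: trivial holonomy is impossible because the developing map would be equivariant between a transitive action with stabilizer and a free action; unipotent holonomy forces the boundary germ to be of Type~$\mathrm{I}$, makes the developing image a $B_0$-saturated strip $\{u_0<u<u_1\}$, and completeness together with proper embedding of the forward orbits of the transverse field $X_s$ force $u_1=\infty$, leaving the $u_0$-end impossible to compactify. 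Nothing in your proposal plays this role.

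Two further mechanisms you invoke are not the ones that actually work. First, integrality of $n$: completeness of the $A$-flow and closedness of the boundary geodesics are perfectly compatible with the half-integer normal forms (Type~$\mathrm{I}(n)$, $\mathrm{II}(n)$ with $n\notin\mathbf{Z}$); what excludes them is that those forms force $\gamma=\phi=0$, whereas a cylinder with two boundary components carries two distinct special markings on a single left-invariant connection, and by Remark~\ref{deuxtypes} and (the proof of) Lemma~\ref{nonvanishingparams} two special markings force $(\gamma,\phi)\neq(0,0)$. Second, the alternation of $n_1,n_2$ across successive geodesics rests on the fact, via formula~(\ref{nothreespmarkings}), that an invariant connection admits at most two special markings, so neighboring cylinders must share the marking through their common boundary; your ``matching the developing map'' has no force without this lemma, and the parity selection requires the explicit sign computation with $E\wedge(X+\lambda Y)/E\wedge X$ rather than an appeal to orientation alone. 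Finally, the existence half also leans on a fact you never state: for each pair $(n,m)$ there is a \emph{unique} left-invariant connection on $\mathrm{Aff}_0(\mathbf{R})$ carrying two special markings of types $\Xi(n)$ and $\Xi(m)$ (formula~(\ref{iftwomarkings})). Without it, ``gluing the half-plane pieces'' along $\{x=j\}$ is not well defined --- each strip must support one homogeneous connection whose two boundary germs are the prescribed normal forms --- and the uniqueness claim of item~(\ref{main-1st}) has no source. The $\mathfrak{sl}(2,\mathbf{R})$ exclusion is likewise only a placeholder in your sketch; the paper's actual argument (Poincar\'e--Hopf forcing a sphere with two special points, then the infinite integral of the invariant area form) is quite different from ``the developing map cannot be completed.''
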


\begin{example} Let us describe explicitly the quotients of~$\nabla_{2,2}$ with parameters~$(1,\tau,[\theta])\in\Xi$. This will also give a self-contained proof of the fact that tori admit quasihomogeneous connections. Consider, in~$\mathbf{R}^2$, the torsion-free affine  connection~$\nabla$ such that
$$\nabla_{\del{x}}\del{x}=0,\; \nabla_{\del{x}}\del{y}=\frac{3}{4}x^2\del{x},\; \nabla_{\del{y}}\del{y}=\frac{3}{8}x^5\del{x}+\frac{3}{4}x^2\del{y}$$
(it is a connection of type~$\mathrm{I}$ for $n=2$ and $\epsilon= \phi=\gamma=-\frac{3}{4}$). Remark that $B=\indel{y}$ is a Killing vector field for $\nabla$. Consider the commuting meromorphic vector fields
$$A=\frac{1}{2}x\del{x}-y\del{y},\;Z=\frac{1}{2}x\del{x}+x^{-2}\del{y}.$$
If we let~$h(x,y)=x^2y$, we have
\begin{equation}\label{invexo}\nabla_A Z=\frac{3}{4}A-Z,\; \nabla_Z Z=-\frac{1}{4}Z,\;\nabla_A A=\frac{3}{4}(h^2+2h)A+\left(\frac{1}{4}-\frac{3}{8}[h^2+2h]\right)Z.\end{equation}

Notice that, since~$[A,Z]=0$ and~$h$ is a first integral of~$A$,  the vector field $A$ is a Killing field of~$\nabla$. The Lie algebra of Killing vector fields of~$\nabla$ contains the subalgebra generated by~$A$ and by~$B$. The rank of this Lie algebra of vector fields is one in~$\{x=0\}$ and two in its complement.  A direct computation  shows that the curvature tensor of $\nabla$ vanishes exactly on $\{x=0\}$.  The connection is thus locally homogeneous in the half planes $\{x<0\}$ and $\{x >0\}$, but not on all of $\mathbf{R}^2$.  

Consider the orientation-preserving birational involution~$\sigma(x,y)=(-x,-y-2x^{-2})$. It preserves the vector fields~$A$ and~$Z$. Moreover, $h\circ \sigma=-2-h$ and hence~$(h^2+2h)\circ \sigma=h^2+2h$. This implies, by~(\ref{invexo}), that~$\nabla$ is preserved by~$\sigma$.   Let~$\Omega=\{(x,y); y<0, h>-2\}$, $U^+=\Omega\cap\{x>0\}$, $U^-=\Omega\cap\{x<0\}$. Notice that~$\sigma|_{U^+}:U^+\to U^-$ is an analytic diffeomorphism. 
Let~$\phi:\Omega\to \Omega$ be the diffeomorphism generated by the flow of~$A$ in time~$\tau$. The diffeomorphism~$\phi$ preserves~$\nabla$ and commutes with~$\sigma$. The quotient of~$\Omega$ under the group generated by~$\phi$ is a cylinder containing a simple closed curve coming from~$\{x=0\}$, whose complement is the union of the two cylinders~$U^+/\langle \phi \rangle$ and~$U^-/\langle \phi \rangle$. Let~$K:U^+/\langle \phi \rangle\to U^-/\langle \phi \rangle$ be given by, first, the restriction to~$U^+/\langle \phi \rangle$ of the flow of~$A$ in time~$\theta\tau$ and then composing with~$\sigma$ (notice that adding an integer to~$\theta$ yields the same result). By identifying~$U^+/\langle \phi \rangle$ and~$U^-/\langle \phi \rangle$ (as open subsets of~$\Omega/\langle \phi \rangle$), via~$K$, we obtain a torus~$S$, naturally endowed with a connection~$\nabla_s$, coming  from~$\nabla$, a globally defined Killing vector field for~$\nabla_s$, induced by~$A$, and a multivalued one, induced by~$B$. There is one simple closed curve in~$S$ coming from~$\{x=0\}/\langle \phi \rangle$. The rank of the Killing algebra of vector fields of~$\nabla_s$ is one along this curve and two in the complement: the connection is not locally homogeneous everywhere (since the curvature tensor vanishes exactly on $\{x=0\}$), but is locally homogeneous in a dense open subset.\end{example}

Note that in the previous example the connected component of the automorphism group of $\nabla_{s}$ is the flow generated by~$A$, all of whose orbits are closed. The proof of Theorem~\ref{thm-global} shows that, in general, the automorphism group  of a quasihomogeneous connection is, up to a finite group, the flow of a Killing field, all of  whose orbits are simple closed curves. Hence, the automorphism group doesn't admit dense orbits (our quasihomogeneous connections are not  counter-examples to Fisher's conjecture).\\

Let us describe the structure of the article and indicate briefly our methods. In section~\ref{Killing fields} we recall the background and the basic definitions. In particular, we recall Nomizu's result about extension of local Killing fields in the real-analytic setting.

The idea for the proof of Theorem~\ref{thm-local} is the following. Lemma~\ref{dim2} proves that for a germ of quasihomogeneous real-analytic affine connection,  we can always  find a two-dimensional  subalgebra  of  the Killing algebra
which is transitive on a nontrivial open set, but not at the origin (Lemma~\ref{dim2}). Consequently, there exists a nontrivial open set where the connection is locally isomorphic either  to
a translation-invariant connection on $\mathbf{R}^2$, or to a left-invariant connection on the affine group. The Abelian case will be dealt with in Section~\ref{commuKillsub}
(Prop.~\ref{propcom}): we show that a quasihomogeneous  connection cannot be locally isomorphic (on a nontrivial open set)  to a translation invariant connection on $\mathbf{R}^2$ (without being  locally homogeneous everywhere). In order to deal with the affine case, we will have to study the invariant affine connections and their Killing algebras in the affine group~(Section~\ref{affine group}) in order to finish, via the same method, the proof of Theorem~\ref{thm-local} (Prop.~\ref{propaffine}, Section~\ref{Quaconnaffine}). The method consists in considering  normal forms at the origin of left invariant vector fields  $X,Y$ on the affine group, with respect to which  we compute Christoffel coefficients  (in general $X$ and $Y$ are meromorphic at the origin).

In order to prove Theorem~\ref{thm-global}, we begin, in Section~\ref{sec:const}, to construct the connections~$\nabla_{n_1,n_2}$ on~$\mathbf{R}^2$ that appear in item~(\ref{main-1st}) of the statement of the Theorem. We then study the symmetries of these connections and prove item~(\ref{main-mod}). Section~\ref{Quasi-compact-surfaces} proves item~(\ref{main-rec}) of Theorem~\ref{thm-global}. The proof depends on all the previous material and  uses~$(G,X)$-structures and their relation to locally homogeneous connections as discussed in Section~\ref{Killing fields}.

Finally, in Section~\ref{sl2chapter} we study geodesically complete affine connections on $\mathbf{R}^n$ which are invariant under the special linear group.

\section{Connections and their symmetries}    \label{Killing fields}

Recall that an \emph{affine connection} $\nabla$  is an operator defined on a manifold $M$ which enables one to consider the derivatives of a smooth vector field  $Y$ with respect to a smooth vector field  $X$ in order to obtain a smooth vector field  $\nabla_{X}Y$ such that, for a function~$f$, $\nabla_{fX}Y=f\nabla_X Y$ and  $\nabla_{X}fY=(X \cdot f)Y+f\nabla_{X}Y$ (it is tensorial in~$X$ and satisfies Leibniz's rule in~$Y$).  The reader will find several equivalent definitions and many details on affine connections in~\cite{kobayashi-nomizu}.

In local coordinates  $(x_{1},x_{2}, \ldots, x_{n})$,  we have  $$\nabla_{\frac{\partial}{\partial x_{i}}}\frac{\partial}{\partial x_{j}}=\Gamma_{ij}^1 \frac{\partial}{\partial x_{1}} +\Gamma_{ij}^2 \frac{\partial}{\partial x_{2}} + \ldots +  \Gamma_{ij}^n \frac{\partial}{\partial x_{n}}$$ for some smooth functions
$\Gamma_{ij}^k$.  These functions, which determine~$\nabla$ locally, are called \emph{Christoffel coefficients} or \emph{Christoffel symbols} of the connection with respect to the coordinates $(x_{1},x_{2}, \ldots, x_{n})$. 
In a real-analytic manifold, $\nabla$ is called \emph{real-analytic} if, for any real-analytic local coordinates $(x_{1},x_{2}, \ldots, x_{n})$, the corresponding Christoffel coefficients $\Gamma_{ij}^k$ are real-analytic functions.

There are two tensors attached to a connection. The \emph{curvature} tensor $$R(X,Y)Z=\nabla_{X} \nabla_{Y}Z-\nabla_{Y}\nabla_{X}Z -\nabla_{\lbrack X, Y \rbrack} Z$$ and the \emph{torsion tensor}
$T(X,Y)=\nabla_XY-\nabla_YX-[X,Y]$. A connection $\nabla$ is said to be \emph{torsion-free} or \emph{symmetric} if the torsion tensor vanishes identically. \emph{All the connections considered in this article are symmetric}. A connection is said to be \emph{flat} if the curvature tensor vanishes. By a classical result  due to \'E.~Cartan,  if a symmetric connection is flat then it is locally isomorphic to the \emph{standard} connection (the one with vanishing Christoffel symbols) on $\mathbf{R}^n$~\cite{Sharpe, Wolf}.

\subsection{Killing fields}
Recall the following classical definition of an infinitesimal symmetry of a connection.

\begin{definition}  A (local) Killing field of  a connection $\nabla$ is a (local) vector field  whose (local) flow  preserves~$\nabla$.
\end{definition}

The terminology comes, by extension, from the Riemannian setting, even if here there is no invariant metric for a Killing field. The Killing vector fields of a connection form naturally a Lie algebra under the Lie bracket. For a connection~$\nabla$, we will denote by~$\mathfrak{K}(\nabla)$ the Lie algebra of its Killing vector fields. For example, we have the following result, whose proof we omit.
\begin{lemma}\label{killingcentralizer} Let~$\nabla$ be a connection in the manifold~$M$ and~$X\in\mathfrak{K}(\nabla)$. If~$Z_1$ and~$Z_2$ commute with~$X$, so does~$\nabla_{Z_1}Z_2$.
\end{lemma}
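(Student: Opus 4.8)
The plan is to prove the lemma directly from the definition of a Killing field in terms of its flow, avoiding any computation with Christoffel symbols. Let $\phi_t$ denote the local flow of $X$. By definition, $X\in\mathfrak{K}(\nabla)$ means that each $\phi_t$ preserves $\nabla$; concretely, for any local vector fields $A,B$ one has
$$(\phi_t)_*(\nabla_A B)=\nabla_{(\phi_t)_*A}\,(\phi_t)_*B.$$
First I would combine this with the standard fact that a vector field $W$ is invariant under the flow of $X$, that is $(\phi_t)_*W=W$ for all small~$t$, if and only if $[X,W]=0$; this is just the statement that the Lie derivative $\mathcal{L}_X W=[X,W]$ vanishes exactly when $W$ is flow-invariant.

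With these two ingredients the conclusion is immediate. Since $Z_1$ and $Z_2$ commute with $X$, both are invariant under $\phi_t$, so $(\phi_t)_*Z_1=Z_1$ and $(\phi_t)_*Z_2=Z_2$. Substituting $A=Z_1$ and $B=Z_2$ into the invariance identity for $\nabla$ gives
$$(\phi_t)_*(\nabla_{Z_1}Z_2)=\nabla_{(\phi_t)_*Z_1}\,(\phi_t)_*Z_2=\nabla_{Z_1}Z_2$$
for all small~$t$. Thus $\nabla_{Z_1}Z_2$ is itself invariant under the flow of $X$, and applying the same equivalence in the other direction yields $[X,\nabla_{Z_1}Z_2]=0$, which is precisely the assertion that $\nabla_{Z_1}Z_2$ commutes with $X$.

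I do not expect any genuine obstacle: the entire content is the infinitesimal characterization of the Killing condition, and once that is in hand the argument is a single substitution. Alternatively, one could phrase everything infinitesimally using the Lie derivative of the connection, namely the $(1,2)$-tensor
$$(\mathcal{L}_X\nabla)(Y,Z)=[X,\nabla_Y Z]-\nabla_{[X,Y]}Z-\nabla_Y[X,Z],$$
whose vanishing is equivalent to $X\in\mathfrak{K}(\nabla)$; evaluating at $Y=Z_1$, $Z=Z_2$ and using $[X,Z_1]=[X,Z_2]=0$ gives $[X,\nabla_{Z_1}Z_2]=0$ at once. The only point requiring minor care on this second route is fixing the sign conventions in the expression for $\mathcal{L}_X\nabla$ and checking that it is tensorial in $Y$ and $Z$, both of which are routine verifications.
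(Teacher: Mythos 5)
Your proof is correct. The paper itself omits the proof of Lemma~\ref{killingcentralizer}, but the identity~(\ref{equationkilling}) that it records immediately afterwards is exactly the infinitesimal characterization you describe in your ``alternative'' route: substituting $Y=Z_1$, $Z=Z_2$ and using $[X,Z_1]=[X,Z_2]=0$ there gives $[X,\nabla_{Z_1}Z_2]=0$ in one line, and this is almost certainly the argument the authors had in mind. Your primary, flow-based argument is an equivalent repackaging of the same fact: it trades the pointwise identity~(\ref{equationkilling}) for the naturality of~$\nabla$ under the flow $\phi_t$ together with the equivalence between $[X,W]=0$ and $(\phi_t)_*W=W$, which is slightly more conceptual but requires you to know (or prove) that the vanishing of the Lie derivative $\mathcal{L}_X\nabla$ is the infinitesimal version of flow-invariance of the connection --- the very content of~(\ref{equationkilling}). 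Either route is complete; no gap.
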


Following~\cite[Ch.~VI, Prop.~2.2]{kobayashi-nomizu}, a (local) vector field~$X$ is a Killing field of the connection~$\nabla$ if for every pair of (local) vectors fields~$Y$ and~$Z$ we have
\begin{equation}\label{equationkilling}[X,\nabla_Y Z]-\nabla_Y[X,Z]=\nabla_{[X,Y]}Z.\end{equation}

In local coordinates, if
\begin{equation}\label{genericcon}\nabla_{\del{x}}\del{x}=A\del{x}+B\del{y},\; \nabla_{\del{x}}\del{y}=C\del{x}+D\del{y}, \; \nabla_{\del{y}}\del{y}=E\del{x}+F\del{y},\end{equation}
the vector field~$X=a\indel{x}+b\indel{y}$ will be a Killing vector field of~$\nabla$ if and only if its coefficients satisfy the following system of linear partial differential equations:
\begin{eqnarray}
\label{primera} 0 & = & a_{xx}+Aa_x-Ba_y+2Cb_x+A_x a+A_y b,\\
\label{segunda} 0 & = & b_{xx}+2Ba_x+(2D-A)b_x-Bb_y+B_x a+B_y b,\\
\label{tercera} 0 & = & a_{xy}+(A-D)a_y+Eb_x+Cb_y+C_xA+C_y b,\\
\label{cuarta} 0 & = & b_{xy}+Da_x+Ba_y+(F-C)b_x+D_x a+D_y b,\\
\label{quinta} 0 & = & a_{yy}-Ea_x+(2C-F)a_y+2Eb_y+E_x a+E_y b,\\
\label{sexta} 0 & = & b_{yy}+2Da_y-Eb_x+Fb_y+F_x a+F_y b.
\end{eqnarray}
These correspond, in formula~(\ref{equationkilling}), to the couples $(Y,Z)\in\left\{\left(\indel{x},\indel{x}\right),\left(\indel{x},\indel{y}\right),\left(\indel{y},\indel{y}\right)\right\}$.

The following Lemma is classical in the field (see, for instance, \cite[Thm.~1.6.20]{Wolf} or \cite[\S 3.1]{Szaro}). 

\begin{lemma}[Linearization in exponential coordinates]\label{exponential}  In exponential coordinates around a given  point $m$ in $M$, every Killing vector field of~$\nabla$ vanishing at~$m$ is linear.
\end{lemma}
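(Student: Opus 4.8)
The plan is to exploit the fact that the flow of a Killing field consists of local affine transformations, together with the naturality of the exponential map under such transformations. Let $X\in\mathfrak{K}(\nabla)$ satisfy $X(m)=0$, and let $\phi_t$ denote its local flow. By the very definition of a Killing field, each $\phi_t$ is, for $t$ small, a local diffeomorphism preserving $\nabla$ defined on a neighborhood of $m$; since $X(m)=0$, the point $m$ is fixed, so $\phi_t(m)=m$.

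First I would record the naturality of the exponential map under $\phi_t$. Because $\phi_t$ preserves $\nabla$, it carries geodesics to geodesics while preserving the affine parameter: if $\gamma_v$ is the geodesic with $\gamma_v(0)=m$ and $\dot\gamma_v(0)=v$, then $s\mapsto \phi_t(\gamma_v(s))$ is again a geodesic, now with initial point $\phi_t(m)=m$ and initial velocity $d(\phi_t)_m(v)$. Writing $L_t=d(\phi_t)_m\colon T_mM\to T_mM$, a one-parameter group of linear isomorphisms, evaluation at $s=1$ gives
\[
\phi_t(\exp_m v)=\exp_m(L_t v)
\]
for every $v$ in a neighborhood of $0\in T_mM$ and every sufficiently small $t$.

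In exponential coordinates around $m$ one identifies a neighborhood of $m$ with a neighborhood of the origin of $T_mM$ through $\exp_m$, and the displayed identity becomes $\phi_t(v)=L_t v$: the flow is \emph{linear} in these coordinates. Differentiating at $t=0$ then yields $X(v)=\dot L_0\, v$, a linear vector field, which is exactly the assertion.

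The only points requiring care are formal rather than substantial. One must check that a local Killing field integrates to a genuine local one-parameter group of connection-preserving diffeomorphisms fixing $m$, which is immediate from the definition together with the existence of local flows, and that $\exp_m$ is a diffeomorphism from a neighborhood of $0$, so that the coordinates and the identity above are meaningful for small $t$ and $v$. I expect no essential difficulty beyond this bookkeeping: the whole geometric content resides in the geodesic-preservation property of affine maps, and once the naturality identity is in hand the linearity of $X$ is forced.
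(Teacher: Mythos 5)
Your argument is correct and follows essentially the same route as the paper's: the paper's (sketched) proof also rests on the fact that in exponential coordinates geodesics through $m$ become lines through $0$ parametrized at constant speed, so that any local automorphism fixing $m$ is linear in these coordinates, and linearity of the Killing field follows by differentiating the flow. Your naturality identity $\phi_t\circ\exp_m=\exp_m\circ\, d(\phi_t)_m$ is just a cleanly packaged form of that same observation.
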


The main idea in the proof of this  linearization result is that geodesics are integral curves of a vector field in  $TM$  with  quadratic  homogeneous vertical part. In exponential  coordinates at $m$, geodesics passing through $m$ locally rectify on lines with constant speed in $T_mM$  passing through $0$ and any local isometry preserving
$0$ will be linear. In exponential coordinates the Christoffel symbols satisfy  $\Gamma_{ij}^k(0)=0$.  

Lemma~\ref{exponential} implies that a local Killing field of  a connection is completely determined by its $1$-jet at a given point. Consequently, in the neighborhood  of  any point of the manifold, the Lie  algebra of Killing fields  of  a connection is finite-dimensional. This implies that connections are (particular) examples of \emph{rigid geometric structures} in Gromov's sense~\cite{Gro,DG}.

In the analytic setting,  we  will make use of an extendability  result  for local  Killing fields proved first by Nomizu in the Riemannian setting~\cite{Nomizu} and generalized later  for rigid geometric structures  (in particular, for analytic connections) by Amores and Gromov~\cite{Amores,Gro,DG}. This phenomenon states roughly that a  local Killing field of a \emph{rigid analytic} geometric structure can be extended along any curve in the underlying manifold $M$. We then get a multivalued Killing field defined on all of $M$ or, equivalently, a global Killing field defined on the universal cover.

In particular, for analytic connections,  the Killing algebra $\mathfrak{K}(\nabla)$, defined as \emph{the Lie algebra of all local Killing fields   in the neighborhood of a given  point},  does not depend upon the point  (as long as $M$ is connected). We will then call, without ambiguity,   $\mathfrak{K}(\nabla)$, the \emph{Killing algebra of $\nabla$.}

\begin{definition} The connection  $\nabla$ is said to be \emph{locally homogeneous} (and  $\mathfrak{K}(\nabla)$ is said to be  \emph{transitive}) on the open subset $U \subset M$, if for any $u \in U$ and for any tangent vector $V  \in T_{u}U$, there exists a local Killing field $X$ of
$\nabla$ such that $X(u)=V$.
\end{definition}

Nomizu's extension phenomenon doesn't imply that the extension of a family of  pointwise linearly independent Killing fields  stays linearly independent. The rank of the Killing Lie algebra is not constant.  In this setting, we may say that Theorem~\ref{thm-local} classifies, in the case of surfaces,  all the ways in which an analytic symmetric affine connection may degenerate and cease to be locally homogeneous. The following Lemma reduces Theorem~\ref{thm-local} to the study of the case where there is a  Killing algebra of dimension~two:

\begin{lemma}  \label{dim2} Let~$\nabla$ be a real  analytic  affine connection in a neighborhood of the origin in~$\mathbf{R}^2$. Suppose that the Killing algebra~$\mathfrak{K}(\nabla)$ has rank two in an open subset that does not contain the origin, but that contains it in its closure. Then $\mathfrak{K}(\nabla)$ contains a two-dimensional subalgebra which has rank two in some open subset  that does not contain the origin, but that contains it   in its closure.
 \end{lemma}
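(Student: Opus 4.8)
The plan is to reduce the Lemma to a purely algebraic statement about $\mathfrak g:=\mathfrak K(\nabla)$, which is finite-dimensional by Lemma~\ref{exponential}. By hypothesis the origin does not belong to the (open) locus where $\mathfrak g$ has rank two, although it lies in its closure; in particular $\mathfrak g$ has rank at most one at the origin. I claim it suffices to exhibit \emph{a two-dimensional subalgebra $\mathfrak h\subseteq\mathfrak g$ having rank two at a single point}. Granting this, fix a basis $X_1,X_2$ of $\mathfrak h$ and let $\omega$ be the analytic function assigning to a point the determinant of the coordinates of $X_1,X_2$ there. Then $\omega\not\equiv 0$, so its zero set---the locus where $\mathfrak h$ has rank at most one---is a proper analytic subset, hence nowhere dense, and its complement $V$ is open and dense. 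As $\mathfrak h\subseteq\mathfrak g$ also has rank at most one at the origin, the origin lies in that zero set, so $0\in\overline V\setminus V$ while $\mathfrak h$ has rank two on $V$. This is exactly the conclusion, and everything reduces to producing $\mathfrak h$.

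To produce $\mathfrak h$ I would first control the isotropy at the origin, $\mathfrak g_0=\{X\in\mathfrak g:X(0)=0\}$. By Lemma~\ref{exponential} each of its elements is linear in exponential coordinates, so passing to $1$-jets embeds $\mathfrak g_0$ into $\mathfrak{gl}(2,\mathbf R)$; since $\mathfrak g$ has rank at most one at the origin, $\mathfrak g_0$ has codimension at most one in $\mathfrak g$, whence $\dim\mathfrak g\le 5$. Now choose $\mathfrak h\subseteq\mathfrak g$ of \emph{minimal} dimension among the subalgebras that have rank two somewhere ($\mathfrak g$ qualifies), and suppose for contradiction that $m:=\dim\mathfrak h\ge 3$. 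Its isotropy at the origin $\mathfrak h_0=\mathfrak h\cap\mathfrak g_0$ is then a linear subalgebra of $\mathfrak{gl}(2,\mathbf R)$ of dimension at least $m-1\ge 2$. If $\mathfrak h_0$ has rank two at some point, a short inspection of the subalgebras of $\mathfrak{gl}(2,\mathbf R)$ produces inside it a two-dimensional subalgebra of linear fields of rank two; being two-dimensional with $2<m$, it contradicts the minimality of $\mathfrak h$.

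There remains the case where $\mathfrak h_0$ has rank at most one everywhere. Up to conjugacy the only subalgebra of $\mathfrak{gl}(2,\mathbf R)$ of dimension $\ge 2$ and rank $\le 1$ is the two-dimensional algebra of linear fields with image in a fixed line, which is isomorphic to $\mathfrak{aff}(\mathbf R)$; hence $\dim\mathfrak h_0=2$, so $m=3$, and $\mathfrak h=\mathfrak h_0\oplus\langle T\rangle$ with $T(0)\neq 0$. The compact case does not intervene: an $\mathfrak h\cong\mathfrak{su}(2)$ would force an isotropy of dimension $\ge 2$ and, having no two-dimensional subalgebra, a fixed point at the origin, hence an embedding $\mathfrak{su}(2)\hookrightarrow\mathfrak{gl}(2,\mathbf R)$, which is impossible. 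One is thus left to run through the short list of three-dimensional real Lie algebras containing $\mathfrak{aff}(\mathbf R)$ and to exhibit in each a two-dimensional subalgebra of rank two near the origin: when $\mathrm{ad}_T$ has a real eigendirection $M\in\mathfrak h_0$ the algebra $\langle T,M\rangle$ works, since $T(0)\neq 0$ while $M$ is a nonzero linear field; when $\mathfrak h\cong\mathfrak{sl}(2,\mathbf R)$ with $T$ elliptic one passes to another Borel subalgebra; and when $\mathfrak h$ is solvable one descends through a codimension-one ideal. Each alternative again violates the minimality of $\mathfrak h$, forcing $m=2$.

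I expect the genuine difficulty to be concentrated in this last case, when $\mathrm{ad}_T$ acts on $\mathfrak h_0$ with non-real eigenvalues and the $\mathrm{ad}$-eigenvector construction breaks down: one must then either descend within the concrete realization of $\mathfrak h\cong\mathfrak{sl}(2,\mathbf R)$ by Killing fields to verify that some Borel subalgebra is nondegenerate, or exclude the configuration outright via the Killing equations. Settling this case, together with the elementary but necessary classification of the rank-two and rank-$\le 1$ subalgebras of $\mathfrak{gl}(2,\mathbf R)$ invoked above, is where the real work lies.
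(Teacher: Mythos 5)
Your opening reduction (rank two at a single point plus analyticity forces the rank-two locus to be open, dense and to omit the origin) is correct and is a clean way to package what the paper checks by hand in each case; your case division on the isotropy algebra $\mathfrak h_0\subseteq\mathfrak{gl}(2,\mathbf R)$ is also essentially the paper's (it splits on $d=\dim\mathfrak K_0(\nabla)$, disposing of $d=1,3,4$ by exhibiting a subalgebra inside the isotropy, exactly as in your Case A). The genuine gap is in your Case B, the one the paper treats as $d=2$: there you leave the decisive step undone, and the partial justification you do give is flawed. First, $\mathfrak h_0$ is the kernel of evaluation at the origin, not an ideal, so $\mathrm{ad}_T$ need not preserve it and ``a real eigendirection $M\in\mathfrak h_0$ of $\mathrm{ad}_T$'' is not well posed. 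Second, even granting a subalgebra $\langle T,M\rangle$ with $T(0)\neq 0$ and $M$ a nonzero linear field, that alone does not give rank two anywhere: $T=\partial/\partial x$ and $M=y\,\partial/\partial x$ are everywhere proportional. Third, you explicitly declare the elliptic $\mathfrak{sl}(2,\mathbf R)$ subcase unsettled. So as written the proof does not close.

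The missing observation that repairs all three points at once is the following. In your Case B every element of $\mathfrak h_0$ is, in exponential coordinates, a linear field with image in a fixed line $L$; hence at any point $p_0$ where $\mathfrak h$ has rank two, $\mathfrak h_0(p_0)\subseteq L$ forces \emph{every} element of $\mathfrak h\setminus\mathfrak h_0$ to be transverse to $L$ at $p_0$. Consequently \emph{any} two-dimensional subalgebra $\mathfrak k\neq\mathfrak h_0$ (necessarily of the form $\langle T',M\rangle$ with $T'\notin\mathfrak h_0$ and $0\neq M\in\mathfrak h_0$) has rank two at points near $p_0$ off the kernel line of $M$, so it suffices to know that $\mathfrak h$ admits a second two-dimensional subalgebra. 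This holds for every three-dimensional real Lie algebra containing $\mathfrak{aff}(\mathbf R)$: if $\mathfrak h\cong\mathfrak{sl}(2,\mathbf R)$ take any other Borel; if $\mathfrak h$ is solvable with $\dim[\mathfrak h,\mathfrak h]=2$ the derived algebra is abelian, hence distinct from the non-abelian $\mathfrak h_0$; if $\dim[\mathfrak h,\mathfrak h]=1$ adjoin its generator to any $T'\notin\mathfrak h_0$. (The compact case is excluded as you say.) With this supplement your argument becomes a complete proof, genuinely different in this case from the paper's, which instead normalizes $\mathfrak K_0=\langle x\,\partial/\partial x,\,y\,\partial/\partial x\rangle$ and extracts the second subalgebra from an explicit Jacobi-identity computation on the structure constants.
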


\begin{proof} Let~$\mathfrak{K}_0(\nabla)\subset\mathfrak{K}(\nabla)$ be the subalgebra consisting of those vector fields that vanish at~$0$ (it is either the full algebra or has codimension~1). By Lemma~\ref{exponential},  elements of $\mathfrak{K}_0(\nabla)$ are linear in exponential coordinates at~$0$. Consequently, $\mathfrak{K}_0(\nabla)$ is embedded in the linear algebra $\mathfrak{gl}(2,\mathbf{R})$. Let~$d$ denote the  dimension of $\mathfrak{K}_0(\nabla)$. 

If $d=1$,  $\mathfrak{K}(\nabla)$ is the sought algebra. 
 
If~$d=4$, $\mathfrak{K}_0(\nabla)= \mathfrak{gl}(2,\mathbf{R})$  and it contains, for example,  the two-dimensional diagonal subalgebra (which is of rank two on an open set containing the origin in its closure).

If~$d=3$,  consider the restriction to~$\mathfrak{K}_0(\nabla)$ of the trace map~$T:\mathfrak{gl}(2,\mathbf{R})\to\mathbf{R}$. If the trace 
map is trivial then~$\mathfrak{K}_0(\nabla)=\mathfrak{sl}(2,\mathbf{R})$, and~$\mathfrak{K}_0(\nabla)$ contains  the two-dimensional parabolic subalgebra which is transitive on an open set accumulating the origin. If the trace map is not trivial, $\mathrm{dim}(\ker(T))=2$. Here $\ker(T)$ is a two dimensional subalgebra of
 $\mathfrak{sl}(2,\mathbf{R})$:  the rank  is two in an open  set accumulating  the origin.

Suppose that~$d=2$, but that~$\mathfrak{K}_0(\nabla)\neq\mathfrak{K}(\nabla)$ and that~$\mathfrak{K}_0(\nabla)$ is of rank one in the neighborhood of the origin, for otherwise there is nothing to prove. Up to a linear change of coordinates, $\mathfrak{K}_0(\nabla)$ is generated by~$X=x\indel{x}$ and~$Y=y\indel{x}$ (the evaluation of any Killing vector field at the origin is tangent to the foliation~$\mathcal{F}$ generated by~$dy$). We have~$[X,Y]=Y$. Let~$Z\in \mathfrak{K}(\nabla)\setminus \mathfrak{K}_0(\nabla)$ be an element  transverse to the foliation~$\mathcal{F}$ in an open set containing the origin in its closure. Suppose that~$[Z,X]=aX+bY+cZ$ and that~$[Z,Y]=dX+eY+fZ$, for some $a,b,c,d,e,f \in \mathbf{R}$. The Jacobi relation applied to~$X$, $Y$ and~$Z$ reads
$(d+af-cd)X+(fb-ce-a)Y+fZ=0$. We must conclude that~$f=0$ and that~$a=-ce$. The equation reduces to
$d(1-c)=0$. If~$d=0$, then~$[Z,Y]=dY$ and the Lie algebra generated by~$Z$ and~$Y$ has the required properties. If~$c=1$, then~$W=-2eX+bY+2Z$ satisfies~$[X,W]=-W$ and the Lie algebra generated by~$X$ and~$W$ 
has the required properties. \end{proof}

\subsection{Locally homogeneous connections and~$(G,X)$-structures}

We finish this section by stating some general results, well known by specialists, showing, in particular,  that a locally homogeneous affine connection on a manifold gives rise to a $(G,X)$-geometry in Thurston's sense~\cite{thurston}. The results will be stated and proved for locally homogeneous affine connections but hold, more generally, for locally homogeneous rigid geometric structures in Gromov's sense~\cite{DG, Gro}.\\

Let~$\nabla$ be an affine connection in a neighborhood of~$0$ in~$\mathbf{R}^n$ and suppose that~$\mathfrak{K}(\nabla)$ is transitive. Let~$G$ be the simply connected Lie group with Lie algebra~$\mathfrak{K}(\nabla)$ and~$G_0\subset G$ the subgroup corresponding to the subalgebra of the Killing vector fields vanishing  at~$0$. \emph{If $G_0$ is closed}, there is a natural identification of a neighborhood of~$G_0/G_0$ in~$G/G_0$ with a neighborhood of the origin in~$\mathbf{R}^n$. When pulling back~$\nabla$ to~$G/G_0$, via this identification, we find a left invariant connection~$\nabla_0$ in~$G/G_0$ which is locally isomorphic  to~$\nabla$.

%There are interesting geometric examples where~$G_0$ is not closed~\cite{}. 

In our setting, Lemma~\ref{dim2} implies that  a connection satisfying  the hypothesis of the Lemma is,  on a non-trivial open set, locally isomorphic, in the previous sense,   to a connection which is translation-invariant on a two-dimensional Lie group. Since a two-dimensional Lie algebra is  either Abelian or isomorphic to the Lie algebra of the affine group of the real line, we will have to deal with translation-invariant connections either  on $\mathbf{R}^2$,  or  on the affine group. These will be studied, respectively, in sections~\ref{commuKillsub} and~\ref{affine group}.\\

The following Proposition gives a proof of Nomizu's extension of local automorphisms  in the  homogeneous context.

\begin{proposition} Let~$G$ be a connected and simply connected  Lie group $G$  acting transitively on a simply connected manifold $M$ and preserving an affine connection~$\nabla$. Suppose that, at each point~$p\in M$, the Lie algebra of Killing vector fields of~$\nabla$ is the Lie algebra of fundamental  vector fields of  the $G$-action. Let~$U\subset M$ be a connected open set and~$f:U\to M$ a local automorphism  of~$\nabla$. There exists a global automorphism~$\widehat{f}:M\to M$ such that~$\widehat{f}|_U\equiv f$. Moreover, $G$ has finite index in the group of global automorphisms of~$\nabla$.
\end{proposition}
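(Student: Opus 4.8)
The plan is to prove the two assertions of the Proposition—extendability of local automorphisms and finite index of~$G$—by exploiting the hypothesis that the Killing algebra at every point coincides with the Lie algebra of fundamental vector fields of the transitive $G$-action. The key observation is that a local automorphism $f:U\to M$ of~$\nabla$ conjugates local Killing fields to local Killing fields; since at each point these are exactly the fundamental fields of the $G$-action, $f$ must conjugate the infinitesimal $G$-action to itself, which should force $f$ to agree locally with the action of an element of~$G$, up to an automorphism of the Lie algebra $\mathfrak{g}=\mathrm{Lie}(G)$.

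First I would fix a base point $p_0\in U$ and consider the induced map on Killing fields: for $X\in\mathfrak{K}(\nabla)$ the pushforward $f_*X$ is again a local Killing field on $f(U)$, hence by hypothesis a fundamental field, so $f$ induces a Lie algebra automorphism $\Phi:\mathfrak{g}\to\mathfrak{g}$. Using exponential coordinates at $p_0$ (Lemma~\ref{exponential}), $f$ is determined by its $1$-jet at $p_0$, and this $1$-jet together with $\Phi$ and the value $f(p_0)$ pins down $f$ completely. The strategy is then to build the global extension $\widehat f$ by hand: choose $g_0\in G$ with $g_0\cdot p_0 = f(p_0)$ (possible since $G$ is transitive), and define a candidate global map using $g_0$ composed with the group-automorphism-type symmetry coming from $\Phi$; then verify on the open set $U$ that this candidate agrees with $f$, and invoke real-analyticity (connectedness of $M$, simple connectedness to rule out monodromy) to conclude the two globally defined automorphisms agreeing on an open set must agree everywhere. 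For the extension one writes $M \cong G/G_0$ after checking $G_0$ is closed (which follows from simple connectedness of $M$ and the stabilizer being the isotropy of the transitive action), and transports $f$ through this identification.

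The cleanest route for the first assertion is probably the following. Since $G\to M$, $g\mapsto g\cdot p_0$, realizes $M$ as $G/G_0$, a local automorphism $f$ near $p_0$ lifts, after the identification, to a germ of a map on $G$ intertwining left translations; because left-invariant vector fields generate $\mathfrak{g}$ and $f_*$ respects the bracket via $\Phi$, the lifted germ must coincide with the germ of $R\circ L_{g_0}$ for a suitable group automorphism $R$ of $G$ integrating~$\Phi$ (using simple connectedness of~$G$ to integrate $\Phi$ to a genuine automorphism). This global map preserves $\nabla$ because $\nabla$ is left-invariant and $\Phi$ preserves the Killing algebra; its germ at $p_0$ matches that of $f$; analytic continuation (Nomizu) then yields $\widehat f = f$ on all of $U$ and $\widehat f$ extends $f$ to all of $M$.

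The finite-index claim I would treat last. Let $\mathrm{Aut}(\nabla)$ denote the group of global automorphisms; each such automorphism induces, as above, an automorphism $\Phi$ of $\mathfrak{g}$ preserving the subalgebra $\mathfrak{g}_0$ of fields vanishing at a chosen point, giving a homomorphism $\mathrm{Aut}(\nabla)\to \mathrm{Aut}(\mathfrak{g},\mathfrak{g}_0)$. The kernel consists of automorphisms inducing the identity on $\mathfrak{g}$; such an automorphism commutes with all Killing fields, so it is determined by the image of one point and, by transitivity and the $1$-jet rigidity of Lemma~\ref{exponential}, lies in the connected group $G$ (acting effectively, or modulo its center). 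Thus $G$ contains the kernel and $\mathrm{Aut}(\nabla)/G$ injects into the quotient of the discrete group $\mathrm{Aut}(\mathfrak{g},\mathfrak{g}_0)/\mathrm{Inn}$, which is finite when $\mathfrak{g}$ is one of the finite-dimensional algebras in play. I expect the main obstacle to be bookkeeping the role of $G_0$: ensuring $G_0$ is closed and that the passage between $f$, its $1$-jet, and the group automorphism $\Phi$ is internally consistent (in particular that $\Phi$ genuinely preserves $\mathfrak{g}_0$ after the base-point adjustment by $g_0$), rather than any single deep difficulty.
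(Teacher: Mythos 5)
Your treatment of the extension statement is essentially the paper's own argument in different clothing: the paper defines $F(g,p)=\Phi\bigl(f_*^\sharp(g),f(\Phi(g^{-1},p))\bigr)$, i.e.\ exactly the conjugate $L_{f_*^\sharp(g)}\circ f\circ L_{g^{-1}}$, and shows by a direct derivative computation that it is independent of~$g$, which simultaneously gives the global extension and its compatibility with~$\nabla$ (note this computation needs no analyticity, whereas you lean on Nomizu/analytic continuation, which the statement of the Proposition does not grant you). One local slip: your sentence ``this global map preserves $\nabla$ because $\nabla$ is left-invariant and $\Phi$ preserves the Killing algebra'' is not a valid justification---normalizing the Killing algebra does not imply preserving the connection---but the correct reason is already contained in the rest of your paragraph (the candidate agrees with $f$ near $p_0$, and the equivariance $\widehat f\circ L_g=L_{g_0R(g)g_0^{-1}}\circ\widehat f$ propagates the property).

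The finite-index argument, however, has a genuine gap. You claim that the kernel of $\mathrm{Aut}(\nabla)\to\mathrm{Aut}(\mathfrak g,\mathfrak g_0)$ ``lies in the connected group $G$.'' An automorphism inducing the identity on $\mathfrak K(\nabla)$ commutes with the $G$-action, hence is a $G$-equivariant self-map of $G/G_0$, i.e.\ a \emph{right} translation by an element of $N_G(G_0)/G_0$; such a map is a left translation only when that element is central, so the kernel need not be contained in (the image of) $G$. Your attempted repair---``there is $g\in G$ with the same image of one point, so they coincide''---fails because $g^{-1}\circ f$ then induces $\mathrm{Ad}_{g^{-1}}$ on $\mathfrak g$, not the identity, so the one-point-determination lemma no longer applies to it. Moreover, even setting this aside, $\mathrm{Aut}(\mathfrak g,\mathfrak g_0)/\mathrm{Inn}$ is not finite for a general finite-dimensional $\mathfrak g$ (already for $\mathfrak g$ abelian it is all of $\mathrm{GL}(n,\mathbf R)$), so finiteness cannot be read off from the Lie algebra alone. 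The paper's route avoids both problems: compose an arbitrary global automorphism with an element of $G$ so that it fixes a point $p$, and invoke the fact that for a rigid geometric structure the group of germs of automorphisms fixing $p$ contains the group generated by the Killing fields vanishing at $p$ with \emph{finite index} (\cite{DG} 3.5, \cite{Gro} 3.4.A). That finiteness statement---which uses that the maps preserve the connection, not merely the Killing algebra---is the missing ingredient; without it, or the explicit classification of isotropies carried out elsewhere in the paper for $\mathfrak{aff}(\mathbf R)$ and $\mathfrak{sl}(2,\mathbf R)$, your argument does not close.
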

\begin{proof} Let~$\Phi:G\times M\to M$ be the transitive action of~$G$ on~$M$. Let~$\mathfrak{g}$ be the Lie algebra of right-invariant vector fields in~$G$ and let~$\Phi_*:\mathfrak{g}\to\mathfrak{X}(M)$ the induced Lie algebra morphism. Since~$f$ is a automorphism, it maps Killing vector fields of~$\nabla$ into Killing vector fields of~$\nabla$ and thus induces an automorphism~$f_*:\mathfrak{g}\to\mathfrak{g}$. In its turn, this induces an automorphism~$f_*^\sharp:G\to G$. Let 
$$F (g,p)=\Phi(f_*^\sharp(g),f(\Phi(g^{-1},p))),$$
wherever it is defined (in a subset of~$G\times M$ and taking values in~$M$). For~$g$ fixed, $F(g,p)$ is a automorphism. Notice that,  for~$p \in U$, $F(e,p)=f(p)$. We will show that~$F(g,p)$ is in fact independent of~$g$. Since for every~$p\in M$ there exists~$g\in G$ such that~$(g,p)$ is in the domain of definition of~$F$, we may define~$\widetilde{f}(p)$ as~$F(g,p)$.  For~$g_0\in G$ and~$v\in T_{g_0}G$, identified to the element of~$\mathfrak{g}$ whose value at~$g_0$ is~$v$, we have
\begin{eqnarray*}\left.\frac{\partial F}{\partial g}\right|_{g_0}(v) &  = & \Phi_*(f_*v)+\frac{\partial \Phi}{\partial p}\circ Df\circ  \Phi_*(\mathrm{Ad}_{{g_0}^{-1}}(-v)) \\
&  = & \Phi_*(f_*v)-\frac{\partial \Phi}{\partial p}\circ  \Phi_*(f_*\mathrm{Ad}_{g_0^{-1}}(v)) \\
&  = & \Phi_*(f_*v)-\Phi_*(\mathrm{Ad}_{f_*^\sharp (g_0)}(f_*\mathrm{Ad}_{g_0}^{-1}(v))) \\
&  = & \Phi_*(f_*v)-\Phi_*(f_*\mathrm{Ad}_{g_0} f_*^{-1}(f_*\mathrm{Ad}_{g_0}^{-1}(v))) =0. %  \\
\end{eqnarray*}
and~$F$ is thus independent of~$g$. Hence, a local automorphism is in fact the restriction of a global one. Hence, the automorphism  pseudogroup of~$\nabla$ is the pseudogroup associated to a group action. If~$f:M\to M$ is a global isometry, there is an element~$g\in G$ such that~$g\circ f$ fixes a point~$p\in M$. But the group of germs of automorphisms  fixing a point contains the group generated by the elements of the Killing algebra that vanish at the point with finite index (see~\cite{DG}  3.5, or~\cite{Gro}  3.4.A).  This implies that~$[\mathrm{Isom}(\nabla):G]<\infty$. 
\end{proof}

Finally, the following proposition establishes the link between~$(G,X)$-structures and locally homogeneous connections.
\begin{proposition}\label{constgx} Let~$M$ be a manifold endowed with a locally homogeneous  affine connection $\nabla$. Let~$p\in M$, let~$\mathfrak{g}$ be the Lie algebra of Killing vector fields of~$\nabla$ in a neighborhood of~$p$ and let~$\mathfrak{g}_0\subset \mathfrak{g}$ the subalgebra of those Killing vector fields vanishing at~$p$. Let~$G$ be the connected and simply connected Lie group corresponding to~$\mathfrak{g}$ and~$G_0\subset G$ the subgroup corresponding to~$\mathfrak{g}_0$. If~$G_0$ is closed, then there exists a finite covering~$\overline{\pi}:\overline{M}\to M$, a $G$-invariant connection~$\nabla_0$ on~$G/G_0$ and a~$(G,G/G_0)$-\nobreakdash structure on~$\overline{M}$, such that~$\overline{\pi}^*\nabla$ is locally isomorphic to $\nabla_0$. 
\end{proposition}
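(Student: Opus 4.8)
The plan is to build the $(G,X)$-structure on a suitable finite cover out of the local models supplied by local homogeneity, using the preceding Proposition to control the transition maps. Write $X=G/G_0$. The subgroup $G_0$ is taken connected (it corresponds to the subalgebra $\mathfrak{g}_0$), and since $G$ is simply connected, the long exact sequence of the fibration $G_0\to G\to G/G_0$ gives $\pi_1(X)=0$; assuming $G_0$ closed, $X$ is moreover a manifold. As recalled just before the statement, the identification of a neighborhood of the base point $eG_0$ with a neighborhood of $p$ produces a $G$-invariant connection $\nabla_0$ on $X$ that is locally isomorphic to $\nabla$ near $p$. First I would check that the Killing algebra of $\nabla_0$ equals the span of the fundamental vector fields of the $G$-action: these already form a copy of $\mathfrak{g}$, and since $\nabla_0$ is locally isomorphic to $\nabla$, whose germ of Killing algebra at $p$ is $\mathfrak{g}$ by the very definition of $\mathfrak{g}$, there is no room for more. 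Transitivity of $G$ spreads this over all of $X$, so $(X,G,\nabla_0)$ satisfies the hypotheses of the preceding Proposition.

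Next I would produce an atlas of charts. Because $\nabla$ is locally homogeneous, the local flows of its Killing fields are local isomorphisms between the germs of $\nabla$ at nearby points; chaining these along paths and using connectedness of $M$ shows that the germ of $\nabla$ at every point is isomorphic to the germ at $p$, hence to $\nabla_0$ at $eG_0$. Thus each $q\in M$ has a neighborhood $U_q$ and a local isomorphism $\phi_q\colon U_q\to X$ of $(M,\nabla)$ with $(X,\nabla_0)$. On an overlap, $\phi_{q'}\circ\phi_q^{-1}$ is a local automorphism of $\nabla_0$, and by the preceding Proposition every such local automorphism is the restriction of a global one; hence the transition maps lie in $H:=\mathrm{Isom}(\nabla_0)$. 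This is an $(H,X)$-structure on $M$; equivalently, it furnishes a developing map $D\colon\widetilde{M}\to X$ and a holonomy morphism $\rho\colon\pi_1(M)\to H$, with $D$ a $\rho$-equivariant local isomorphism of the pullback of $\nabla$ to $\widetilde{M}$ with $\nabla_0$.

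The decisive step, and the reason a finite cover is unavoidable, is to reduce the structure group from $H$ to $G$. The preceding Proposition gives $[H:G]<\infty$. Since the preimage of a finite-index subgroup under a group homomorphism again has finite index (the cosets inject under the map), $\Gamma:=\rho^{-1}(G)$ has finite index in $\pi_1(M)$; let $\overline{\pi}\colon\overline{M}\to M$ be the associated finite cover, so that $\pi_1(\overline{M})=\Gamma$. Restricting the developing map and the holonomy to $\overline{M}$, the holonomy now takes values in $G$ by construction, while $D$ remains a local isomorphism; this is precisely a $(G,X)$-structure on $\overline{M}$ for which $\overline{\pi}^*\nabla$ is locally isomorphic to $\nabla_0$.

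The main obstacle is exactly this passage from $H$ to $G$: local homogeneity only identifies local models up to the full automorphism group of $\nabla_0$, and there is no reason for the holonomy to land in $G$ on the nose, so one is forced to pass to a cover. The content of the argument is therefore the combination of the finite-index statement of the preceding Proposition with the elementary finite-index property of preimages, which together force a finite cover but no more. A secondary point to handle with care is that $X$ is genuinely a simply connected manifold (needing $G_0$ closed and connected), since both the developing-map machinery and the identification $\mathfrak{K}(\nabla_0)=\mathfrak{g}$ rest on it.
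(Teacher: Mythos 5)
Your proof is correct and follows essentially the same route as the paper: construct $\nabla_0$ on $X=G/G_0$ by propagating the local model, obtain an atlas with transition maps in $\mathrm{Isom}(\nabla_0)$ via the extension result of the preceding Proposition, and exploit $[\mathrm{Isom}(\nabla_0):G]<\infty$ to pass to a finite cover. The only (cosmetic) difference is that you realize the cover as the one associated to $\rho^{-1}(G)\subset\pi_1(M)$ via the holonomy, whereas the paper builds it by an explicit gluing in the style of the orientation double cover; the two constructions agree up to connected components.
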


\begin{proof} 
Let~$X=G/G_0$ and  $x_0=G_0/G_0$. There is a neighborhood~$V\subset M$ and a diffeormorphism~$\psi:(V,p)\to (X,x_0)$ mapping the Killing vector fields of~$\nabla$ to the vector fields induced by~$G$ on~$X$. Let~$\nabla_0^V$ be the connection in~$\psi(V)$ given by~$\psi(\nabla)$. By construction, the vector fields in~$X$ inducing the action of~$G$ are Killing vector fields of~$\nabla_0^V$. Propagate~$\nabla_0^V$ to all of~$X$ via the action of~$G$. This produces a globally-defined $G$\nobreakdash-invariant affine connection~$\nabla_0$ in~$X$, which is locally isomorphic to $\nabla$.

Let~$q\in M$. Because the Killing algebra of~$\nabla$ has full rank, by composing finitely many local automorphisms of those generated by integrating the Killing algebra, we may find a  local  automorphism~$f:(U,q)\to(M,p)$. Hence, for~$h=\psi\circ f$, $h:U\to X$ maps~$\nabla$ to~$\nabla_0$. We thus have an atlas~$\{(U_i,h_i); i\in I\}$ of~$M$ taking values in~$X$ whose changes of coordinates lie within~$\mathrm{Isom}(\nabla_0)$. We will now proceed in a manner akin to the construction of the orientable double cover of a manifold. Let~$\mathrm{Isom}(\nabla_0)/G=\{[k_1],\ldots, [k_n]\}$. For $j \in \{1, \ldots, n \}$, let~$V_i^j=U_i$. The manifold~$W=\sqcup V_i^j$ is naturally endowed with a canonical projection~$\pi:W\to M$ and charts~$k_j\circ f_i:V_i^j \to X$. Within~$W$, identify~$x\in V_i^j$ with~$x'\in V_{i'}^{j'}$, if~$V_i^j\cap V_{i'}^{j'}\neq \emptyset$ and if~$(k_j\circ h_i)\circ (k_{j'}\circ h_{i'})^{-1}$ is the restriction of an element of~$G$. After the identification, we have a manifold~$\overline{M}$ and an~$n$\nobreakdash-fold covering~$\overline{\pi}:\overline{M}\to M$  that is canonically endowed with an atlas taking values in~$X$ whose changes of coordinates are in~$G$. (If~$\overline{M} $ is not connected we may replace it by one of its connected components). \end{proof}

\begin{remark}\label{rem-gxorient} In the past proof, if~$M$ is oriented and~$X$ is orientable, $X$ may be oriented by~$\psi$. In this case, by construction, the charts~$h_i:U_i\to X$ preserve the orientation and we may replace, in the above construction, $\mathrm{Isom}(\nabla_0)$, by its subgroup of orientation-preserving automorphisms~$\mathrm{Isom}^+(\nabla_0)$. In particular, if~$G=\mathrm{Isom}^+(\nabla_0)$, the finite covering of Proposition~\ref{constgx} may be chosen to be trivial.
\end{remark}

\begin{remark} The~$(G,X)$\nobreakdash-structure associated to~$M$ in Proposition~\ref{constgx} is not canonically associated to~$\nabla$. The subgroup~$S_{G_0}$ of automorphisms of~$G$ that preserve~$G_0$ acts naturally upon the $G$\nobreakdash-invariant connections on~$G/G_0$ and the connection~$\nabla_0$ of Proposition~\ref{constgx} may be replaced by any one 
along the same orbit of~$S_{G_0}$, producing a different~$(G,X)$\nobreakdash-structure. \end{remark}

\section{Quasihomogeneous connections, commutative Killing subalgebra}   \label{commuKillsub}

The aim of this section is to prove Theorem~\ref{thm-local} in the case where the Killing algebra of dimension two constructed in Lemma~\ref{dim2}  is commutative. We will prove:

\begin{proposition} \label{propcom} 
Let~$\nabla$ be a real-analytic torsion-free affine connection  defined in a neighborhood of the origin in~$\mathbf{R}^2$ and suppose that~$\nabla$ admits Killing vector fields~$X$ and~$Y$ which commute and that are linearly independent in an open set having the origin in its closure. Then~$\nabla$ is locally homogeneous in the neighborhood of the origin.
\end{proposition}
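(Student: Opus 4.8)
The plan is to reduce the pair $(X,Y)$ to a normal form adapted to the origin and then to read off the Christoffel symbols directly from the Killing equations~(\ref{primera})--(\ref{sexta}). Since $X$ and $Y$ commute and are independent on the open set $U$ with $0\in\overline U$ but dependent at $0$, I would first split into two cases according to the evaluation map $\mathfrak{K}(\nabla)\to T_0\mathbf{R}^2$ on the span $\langle X,Y\rangle$. If some constant combination of $X,Y$ is nonzero at $0$, then after replacing the pair by a suitable basis I may assume $X(0)\neq 0$ and $Y(0)=0$; straightening the flow of $X$ yields coordinates in which $X=\partial_y$, and since $[X,Y]=0$ the field $Y$ is $y$\nobreakdash-independent, so $Y=p(x)\partial_x+q(x)\partial_y$ with $p(0)=q(0)=0$. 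If, instead, every combination of $X,Y$ vanishes at $0$, then by Lemma~\ref{exponential} both are linear in exponential coordinates and $\langle X,Y\rangle$ is a two-dimensional abelian subalgebra of $\mathfrak{gl}(2,\mathbf{R})$.

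In the first case, the fact that $\partial_y\in\mathfrak{K}(\nabla)$ forces, through~(\ref{primera})--(\ref{sexta}), all six Christoffel symbols $A,\dots,F$ of~(\ref{genericcon}) to depend on $x$ alone. Imposing that $Y$ is a Killing field then turns the system into a collection of linear ordinary differential equations in $x$, which I would integrate explicitly: each symbol gets expressed in terms of $p,q$ and integration constants (for example, equation~(\ref{quinta}) gives $E'p=Ep'$, so $E=c_1p$, and the remaining equations determine $C,F,D,A,B$ in the same spirit). I would then impose that the result be analytic at $x=0$, where $p$ vanishes; this regularity requirement across the degeneracy locus $\{p=0\}$ is what selects the admissible connections and constrains the integration constants.

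The decisive step is then to solve the \emph{full} Killing system~(\ref{primera})--(\ref{sexta}) for each admissible connection and to exhibit Killing fields beyond $X$ and $Y$. The mechanism I expect is that the abelian hypothesis forces the solution space of the Killing equations to be strictly larger than $\langle X,Y\rangle$, producing a field whose value at $0$ is transverse to $X(0)$; together with $X$ this spans $T_0\mathbf{R}^2$, so that $\mathfrak{K}(\nabla)$ is transitive at the origin and $\nabla$ is locally homogeneous there. As a guiding example, for $Y=x\partial_x$ and the connection $\nabla_{\partial_y}\partial_y=x\partial_x$ (all other symbols zero), solving the system yields the additional Killing fields $\cos y\,\partial_x$ and $\sin y\,\partial_x$, and $\cos y\,\partial_x$ does not vanish at $0$, so transitivity at the origin holds. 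In the second (doubly-vanishing) case I would instead use the strong constraints imposed by invariance under two independent linear fields together with analyticity at $0$ to show that the curvature tensor vanishes; by Cartan's theorem $\nabla$ is then flat, hence locally isomorphic to the standard connection and locally homogeneous everywhere, the translations supplying transitivity at $0$.

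The main obstacle I anticipate lies in carrying out this last step uniformly. The explicit integration, and above all the analyticity analysis at the degeneracy locus $\{p=0\}$, depend on the vanishing orders of $p$ and $q$, so a case analysis seems unavoidable; in each case one must verify that a transverse Killing field genuinely survives at the origin. Conceptually this amounts to proving that no abelian quasihomogeneous degeneration is possible — in sharp contrast with the affine (non-abelian) situation treated later, where the two-dimensional subalgebra satisfies $[X,Y]=Y$ and precisely such degenerations do occur.
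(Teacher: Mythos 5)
Your outline tracks the paper's proof quite closely: reduce the commuting pair to a normal form, force the Christoffel symbols via the Killing equations, use analyticity at the degeneracy locus to constrain them, and then conclude by exhibiting an extra Killing field (rank-one case) or flatness (rank-zero case). Your treatment of the doubly-vanishing case is essentially the paper's argument, and your example is correct: for $\nabla_{\partial/\partial y}\partial/\partial y=x\,\partial/\partial x$ the fields $\cos y\,\partial/\partial x$ and $\sin y\,\partial/\partial x$ are indeed Killing, matching the paper's equation $h''+(2\gamma-\phi)h'+\epsilon h=0$ with $\gamma=\phi=0$, $\epsilon=1$. The problem is that what you label the decisive step --- that \emph{every} connection surviving the analyticity constraints admits a Killing field transverse to $X$ at the origin --- is precisely the content of the proposition, and you verify it only in one example while stating that you ``expect'' the mechanism to work in general. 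As written, the proof of the central claim is missing.

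Concretely, two ingredients make that step finite in the paper and are absent from your plan. First, Lemma~\ref{killingcentralizer}: since $\nabla_XX$, $\nabla_XY$, $\nabla_YY$ commute with both $X$ and $Y$, they are \emph{constant}-coefficient combinations of $X$ and $Y$; this replaces your proposed integration of the six coupled ODEs from (\ref{primera})--(\ref{sexta}) by six real parameters and makes the analyticity discussion a matter of inspecting explicit rational expressions. Second, Lemma~\ref{exponential} linearizes the vanishing field, so the rank-one case splits into exactly two normal forms (diagonalizable, $x\,\partial/\partial x$ with transverse field $\partial/\partial y$; non-diagonalizable, $x\,\partial/\partial y$), whereas your parametrization $Y=p(x)\partial/\partial x+q(x)\partial/\partial y$ leaves an a priori unbounded case analysis on the vanishing orders of $p$ and $q$ --- you would separately have to exclude, say, $p(x)=x^2$, which linearizability rules out at once. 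With these in hand, analyticity pins the diagonalizable case down to the three-parameter family (\ref{connspeciale}), and the ansatz $h(y)\partial/\partial x$ reduces the full Killing system to the constant-coefficient equation $h''+(2\gamma-\phi)h'+\epsilon h=0$, which always has a solution with $h(0)\neq0$; that is the uniform argument your proposal needs and does not supply. Note also that in the non-diagonalizable rank-one case the correct conclusion is flatness, not an extra Killing field of the shape you anticipate, so the ``mechanism'' genuinely bifurcates there too.
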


Let~$\nabla$ be a connection defined in a neighborhood of the origin of~$\mathbf{R}^2$ with Christoffel symbols~$\{\Gamma_{ij}^k\}$ and let~$X$ and~$Y$ be commuting Killing fields as in the statement of Proposition~\ref{propcom}. By Lemma~\ref{killingcentralizer} and the fact that~$X$ and~$Y$ generate the Lie algebra of vector fields that commute with~$X$ and~$Y$,
\begin{equation}\label{consconn}\nabla_{X}X=\alpha X+\beta Y,\;\nabla_{X}Y=\gamma X+\delta Y,\;\nabla_{Y}Y=\epsilon X+\phi Y.\end{equation}
for some constants~$\alpha, \beta, \gamma, \delta, \epsilon, \phi$.

\subsection{One-dimensional orbits}   We will suppose, without loss of generality, that~$X(0)=0$ and that it is, following Lemma~\ref{exponential}, linear. Since~$X$ and~$Y$ commute, $X$ must be a constant multiple of~$Y$ along its common orbit and, since~$X$ vanishes at the origin, it must have one vanishing eigenvalue (and thus both of its eigenvalues are real). There are two cases to be considered, according to the diagonalizability of~$X$.\\

\paragraph{Diagonalizable} We may assume that
$X=x\indel{x}$. It vanishes along~$x=0$ and thus this curve is invariant by~$Y$. If~$Y=xf\indel{x}+g\indel{y}$ commutes with~$X$, then~$f_x=0$ and~$g_x=0$. Hence, $f=f(y)$ and $g=g(y)$ with~$g(0)\neq 0$. We may suppose, up to a change of variables in~$y$, that~$g\equiv 1$. In the coordinates~$(e^{-\int_0^y h(s)ds}x,y)$ the vector fields are
$$X =x\del{x},\; Y=\del{y}.$$
On the other hand, from the expressions of $X$ and $Y$ in the coordinates~$(x,y)$,  we can compute $\nabla_XX, \nabla_XY$ and $\nabla_YY$ in terms of the Christoffel symbols $\Gamma_{ij}^k$. Solving for the Christoffel symbols, we have
$$\Gamma_{11}^1= \frac{\alpha-1}{x}, \;\Gamma_{11}^2= \frac{\beta}{x^2}, \; \Gamma_{12}^1=\gamma ,\;  \Gamma_{12}^2= \frac{\delta}{x},\; \Gamma_{22}^1= \epsilon x, \;\Gamma_{22}^2=\phi .$$
Since the connection is analytic at the origin, $\alpha=1$, $\beta=0$, $\delta=0$ and the connection is
\begin{equation}\label{connspeciale}
\nabla_{\del{x}}\del{x}=0,\; \nabla_{\del{x}}\del{y}=\gamma\del{x},\; \nabla_{\del{y}}\del{y}=\epsilon x\del{x}+\phi\del{y}.
\end{equation}
We will prove that such a connection is locally homogeneous at the origin. A vector field~$h(y)\indel{x}$ is a Killing vector field of~(\ref{connspeciale}) if and only if it satisfies the system of equations~(\ref{primera}--\ref{sexta}), which reduce to the single ordinary differential equation $ h''+(2\gamma-\phi)h'+\epsilon h=0$. 
If~$h_0(y)$ is a solution to this equation that does  not vanish at the origin, the Killing vector fields~$\indel{y}$ and~$h_0(y)\indel{x}$ are linearly independent at the origin. \\

\paragraph{Non-diagonalizable case} We have
$X=x\indel{y}$. If~$Y=f\indel{x}+g\indel{y}$, with $f$ and $g$ analytic functions,  then, from~$[X,Y]=0$, 
$f_y=0$ and $x g_y-f=0$. Hence, there exist analytic  functions~$h(x)$, $k(x)$ such that~$f=xh, g=yh+k$ with~$k(0)\neq 0$.  This leads to the following expression:
$$X=x\del{y},\; Y=xh(x)\del{x}+(yh(x)+k(x))\del{y}.$$
As for the equations defining the Christoffel symbols  in the coordinates $(x,y)$, we have
$$\nabla_{X}X=x^2\left(\Gamma_{22}^1\del{x}+\Gamma_{22}^2\del{y}\right)=\alpha x\del{y}+\beta\left[xh\del{x}+(yh+k)\del{y}\right].$$
Evaluating at~$0$, the equation for~$\indel{y}$ reads~$\beta k(0)=0$ and thus~$\beta=0$. This implies that~$\alpha=0$,~$\Gamma_{22}^1\equiv 0$ and~$\Gamma_{22}^1\equiv 0$. We also have
$$\nabla_{X}Y=x^2h\left(\Gamma_{12}^1\del{x}+\Gamma_{12}^2\del{y}\right)+xh\del{y}=\gamma x\del{y}+\delta\left[xh\del{x}+(yh+k)\del{y}\right].$$
Again, evaluating at~$0$ in the equation for~$\indel{y}$ gives~$\delta=0$ and thus~$\Gamma_{12}^1\equiv 0$. We obtain
\begin{equation}\label{eqbanale}\Gamma_{12}^2=(\gamma h^{-1}-1)x^{-1}.\end{equation} Since $\Gamma_{12}^2$ must be analytic at the origin, $h(0)\neq 0$. Up to multiplying~$Y$ by a constant we will suppose that~$h(0)=1$.  

We show now that we can find new local coordinates in the neighborhood of the origin, with respect to which
$X$ remains as before, but where we can suppose that~$h\equiv 1$ in the expression for~$Y$. For this, let $\xi$ be an analytic function in $x$, with~$\xi(0)\neq 0$ and such that, in the coordinate $\overline{x}=x\xi(x)$ the vector field~$xh(x)\indel{x}$ reads~$\overline{x}\indel{\overline{x}}$. Then,  in the coordinates $(\overline{x},\overline{y})= \left(x\xi(x),y\xi(x)\right)$, we have
$$X=\overline{x}\del{\overline{y}}, \; Y=\overline{x}\del{\overline{x}}+(\overline{y}+k(\overline{x}))\del{\overline{y}}.$$
Let us keep the notation $(x,y)$ for these new coordinates. We thus have, from~(\ref{eqbanale}), $\gamma=1$ and~$\Gamma_{12}^2\equiv 0$.  For the last coefficients of the connection we have
$$\nabla_{Y}Y-Y=x^2\left(\Gamma_{11}^1\del{x}+\Gamma_{11}^2\del{y}\right)+xk'\del{y}=\epsilon x\del{y}+(\phi-1)\left[x\del{x}+(y+k)\del{y}\right].$$
Evaluating at the origin, we must have~$\phi=1$ and hence~$\Gamma_{11}^1\equiv 0$. All the Christoffel symbols vanish except for~$\Gamma_{11}^2=(\epsilon-k')x^{-1}$. Since~$\partial \Gamma_{11}^2/\partial y=0$,  the curvature of $\nabla$ vanishes and $\nabla$  is flat and, in particular, locally homogeneous at the origin.

\subsection{Fixed points} We now assume that the Lie algebra generated by $X$ and $Y$ is of rank $0$ at the origin. By Lemma~\ref{exponential}, there are coordinates~$(x,y)$ where both vector fields are linear. If~$Z$ and~$W$ are linear vector fields,
$$\nabla_Z W=L+\sum Q_{ij}^k \Gamma_{ij}^k,$$
for some linear vector field~$L$ and some quadratic and homogeneous functions~$Q_{ij}^k$. In this way, equations (\ref{consconn}) become a system of the form
$$\sum_{i,j,k}  Q_{ij}^{kl} \Gamma_{ij}^k=L_l$$
for~$l=1,\ldots, 3$, corresponding to the pairs~$(Z,W)\in\{(X,Y), (X,X), (Y,Y)\}$.  But since~$\Gamma_{ij}^k$ is analytic at the origin and the left-hand side of each of these equations has a trivial linear part at the origin, we must have~$L_l=0$.  The system has a unique solution since the connection is determined by~(\ref{consconn}) and thus~$\Gamma_{ij}^k\equiv0$. The connection is flat and thus locally homogeneous. This finishes the proof of  Proposition~\ref{propcom}.\\

Remark that the above argument works also in higher dimensions. It shows that if~$\nabla$ is an analytic  connection in a neighborhood of the origin of~$\mathbf{R}^n$ and~$X_1$, \ldots, $X_n$ are commuting Killing vector fields of~$\nabla$ vanishing at the origin, but of rank $n$ on some nontrivial open set, then  $\nabla$  is flat.

\section{Invariant connections on the affine group} \label{affine group}
The \emph{group of affine transformations of the real line} or, simply, the \emph{affine group} is the group
$$\mathrm{Aff}(\mathbf{R})=\{x\mapsto ux+v,\; u,v\in\mathbf{R}, u\neq 0 \},$$
with the product
$$(u_1,v_1)\cdot(u_2,v_2)=(u_1 u_2,u_1v_2+v_1)$$
and Lie algebra~$\mathfrak{aff}(\mathbf{R})$. We will denote by~$\mathrm{Aff}_0(\mathbf{R})$ the connected component of the identity (the couples~$(u,v)$ with~$u>0$). In $\mathrm{Aff}_0(\mathbf{R})$, the Lie algebra of~\emph{right-invariant} vector fields is generated by
\begin{equation}\label{rightinv} A_0=-\left(u\del{u}+v\del{v}\right),\;B_0= \del{v},\end{equation}
which satisfy~$[A_0,B_0]=B_0$. The Lie algebra of \emph{left-invariant} vector fields is generated by
$$X_0=u\del{u},\; Y_0=-u\del{v},$$
which satisfy~$[X_0,Y_0]=Y_0$. The Lie algebra of left-invariant vector fields may be locally characterized as that of the vector fields which simultaneously commute with~$A_0$ and~$B_0$. A left-invariant vector field will be called~\emph{semisimple} if it is conjugate to~$X_0$ and \emph{unipotent} if it is conjugate to~$Y_0$. The group of automorphisms of~$\mathrm{Aff}_0(\mathbf{R})$ is the restriction to~$\mathrm{Aff}_0(\mathbf{R})$ of the group of inner automorphisms of~$\mathrm{Aff}(\mathbf{R})$. Via the action by inner automorphisms (adjoint action) we have
\begin{equation}\label{inner} (u,v)_*\left(\begin{array}{c} B_0 \\ A_0\end{array}\right)=\left(\begin{array}{cc} u & 0 \\ v & 1\\ \end{array}\right)\left(\begin{array}{c} B_0 \\ A_0\end{array}\right), \;\; (u,v)_*\left(\begin{array}{c} Y_0 \\ X_0\end{array}\right)=\left(\begin{array}{cc} u & 0 \\ v & 1\\ \end{array}\right)\left(\begin{array}{c} Y_0 \\ X_0\end{array}\right).
\end{equation}

In particular, the orientation-preserving automorphisms of~$\mathrm{Aff}_0(\mathbf{R})$ are inner ones.\\

If a connection~$\nabla$ in~$\mathrm{Aff}_0(\mathbf{R})$ is invariant by left-translations, its  Killing algebra $\mathfrak{K}(\nabla)$ contains the Lie algebra generated by $A_0$ and $B_0$. Since~$A_0$ and~$B_0$ commute with~$X_0$ and~$Y_0$, $\nabla$ must be constant with respect to these left-invariant vector fields in the sense that there must exist constants $\alpha$, $\beta$, $\gamma$, $\delta$, $\epsilon$ and~$\phi$ such that
\begin{equation}\label{affinvcon}\nabla_{X_0}X_0=\alpha X_0+\beta Y_0,\;\nabla_{X_0}Y_0=\gamma X_0+\delta Y_0,\;\nabla_{Y_0}Y_0=\epsilon X_0+\phi Y_0.\end{equation}

The group of automorphisms of~$\mathrm{Aff}_0(\mathbf{R})$ acts upon the left-invariant connections (see formula~(\ref{inner}) for the action on left-invariant vector fields): when replacing~$(X_0,Y_0)$ by~$(X,Y)=(X_0+\lambda Y_0,\mu Y_0)$,
\begin{eqnarray}\nonumber \nabla_{X}X & = & (\alpha+2\lambda\gamma+\epsilon\lambda^2)X+\mu^{-1}(\beta+[2\delta-\alpha-1]\lambda+[\phi-2\gamma]\lambda^2 -\lambda^3\epsilon)Y  ,\\ \label{actionadjconn}
\nabla_{X } Y &= &\mu(\gamma+\lambda\epsilon) X +(\delta+[\phi-\gamma]\lambda-\epsilon\lambda^2) Y ,\\ \nonumber \nabla_{Y }Y  & = & \mu^2\epsilon X +\mu(\phi-\lambda\epsilon) Y .
\end{eqnarray}

\begin{definition}\label{defnmarking} A \emph{marked} connection on~$\mathrm{Aff}_0(\mathbf{R})$ is a couple~$(\nabla,Z)$ of a left-invariant connection~$\nabla$ and a left-invariant semisimple vector field~$Z$ such that~$\nabla_Z Z\wedge Z=0$. We say that~$Z$ is a \emph{marking} of~$\nabla$. In a marked connection~$(\nabla,Z)$, $\alpha(\nabla,Z)$ is the unique real such that~$\nabla_Z Z=\alpha(\nabla,Z)Z$ and~$\delta(\nabla,Z)$ is the unique real such that~$\nabla_Z Y_0=\gamma Z+\delta(\nabla,Z) Y_0$. A marked connection~$(\nabla,Z)$ is said to be
\begin{itemize}\item of Type~$\mathrm{I}_0(n)$, if~$\alpha(\nabla,Z)=-1/n$ and~$\delta(\nabla,Z)=1$;
 \item of Type~$\mathrm{II}_0(n)$, if~$\alpha(\nabla,Z)=-1/n$ and~$\delta(\nabla,Z)=1+\alpha(\nabla,Z)$,
\end{itemize}
for some $n \in \mathbf{R}^*$.

A marked connection is said to be \emph{special}, if it is  either of  Type~$\mathrm{I}_0(n)$, or of Type~$\mathrm{II}_0(n)$, with~$n\in\frac{1}{2}\mathbf{Z}$, $n\geq 1$.
\end{definition}

The interest of this Definition comes from the fact that quasihomogeneous  analytic connections on compact surfaces will be locally modeled, in the locus of local homogeneity, by special connections of Type~$\mathrm{I}_0(n)$, or of Type~$\mathrm{II}_0(n)$, with respect to some marking. Formula~(\ref{actionadjconn}) guarantees that, in a marked connection~$(\nabla,Z)$, the quantities~$\alpha(\nabla,Z)$ and~$\delta(\nabla,Z)$ are well-defined. From the same formula, it follows that a connection has, if finitely many, at most three markings.

\begin{remark}\label{deuxtypes}
Let~$\nabla$ be a connection of the form (\ref{affinvcon}) with~$\beta=0$, this is, $(\nabla,X)$ is a marked connection. We have~$\alpha(\nabla,X)=\alpha$ and~$\delta(\nabla,X)=\delta$. For~$\lambda\neq 0$, $(\nabla,X+\lambda Y)$ is a marked connection with $\alpha(\nabla, X+\lambda Y)=\alpha'$ and~$\delta(\nabla, X+\lambda Y)=\delta'$ if and only if
\begin{equation}\label{iftwomarkings}\left(\lambda \gamma,\lambda^2\epsilon,\lambda \phi\right)=(\delta+\delta'-1-\alpha,\alpha+\alpha'-2\delta-2\delta'+2,1-2\delta+\alpha').\end{equation}
In particular, the connection is determined by the values of~$\alpha$, $\alpha'$, $\delta$ and~$\delta'$  up to the natural equivalence~(\ref{actionadjconn}). For the third marking~$X+\lambda''Y$ we have
\begin{equation}\label{nothreespmarkings}\alpha''=\frac{\alpha\alpha'-1-4\delta\delta'+2\delta+2\delta'}{2-2\delta-2\delta'+\alpha+\alpha'},\; \delta''=\frac{1-\delta\alpha'-\alpha\delta'+\alpha\alpha'+\alpha+\alpha'-\delta-\delta'}{2-2\delta-2\delta'+\alpha+\alpha'}.\end{equation}
\end{remark}

A left-invariant connection on the affine group might admit some extra symmetries: the full isometry group might be bigger than the one generated by Killing algebra. In the cases that will interest us, we have:

\begin{proposition}\label{isomleft} Let~$\nabla_0$ be a left-invariant connection in~$\mathrm{Aff}_0(\mathbf{R})$ such that~$\mathfrak{K}(\nabla_0)$ is the Lie algebra of right-invariant vector fields. 
\begin{enumerate}
\item\label{isomleftuno} If~$\nabla_0'$ is a left-invariant connection in~$\mathrm{Aff}_0(\mathbf{R})$, then any germ of isometry between~$\nabla_0$ and~$\nabla_0'$, fixing the identity, is given by an inner automorphism of~$\mathrm{Aff}(\mathbf{R})$.
\item\label{isomleft-ori} Any germ of orientation-preserving self-isometry of~$\nabla_0$ is the germ of a left translation (equivalently, a germ of orientation-preserving isometry of~$\nabla_0$ fixing a point is the identity).

\end{enumerate}
\end{proposition}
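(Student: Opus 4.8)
The plan is to reduce both statements to the structure of $\mathrm{Aut}(\mathfrak{aff}(\mathbf{R}))$, realized by inner automorphisms, together with the rigidity of isometries. The mechanism that makes everything work is that for a right-invariant field $X$ with $X(e)=\xi$ (where $e=(1,0)$), the flow is the left translation $p\mapsto\exp(t\xi)\,p$, so $\Phi^X_t(e)=\exp(t\xi)$; this lets the group exponential conjugate the \emph{pointwise} action of an isometry on Killing fields back to the isometry itself.

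For part~(\ref{isomleftuno}), let $f$ be a germ at $e$ of an isometry from $\nabla_0$ to $\nabla_0'$ with $f(e)=e$. Since $f$ carries germs of Killing fields of $\nabla_0$ to germs of Killing fields of $\nabla_0'$, and $f^{-1}$ does the reverse, $f_*$ is an isomorphism $\mathfrak{K}(\nabla_0)\to\mathfrak{K}(\nabla_0')$. Because $\nabla_0'$ is left-invariant, $\mathfrak{K}(\nabla_0')$ contains the right-invariant fields $\mathfrak{R}=\langle A_0,B_0\rangle$, and the dimension count $\dim\mathfrak{K}(\nabla_0')=\dim\mathfrak{K}(\nabla_0)=\dim\mathfrak{R}=2$ forces $\mathfrak{K}(\nabla_0')=\mathfrak{R}$. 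Hence $f_*$ is a Lie-algebra automorphism of $\mathfrak{R}\cong\mathfrak{aff}(\mathbf{R})$; evaluating at $e$ produces $\phi\in\mathrm{Aut}(\mathfrak{aff}(\mathbf{R}))$ (evaluation $\mathfrak{R}\to T_eG$ is a Lie anti-isomorphism, which does not affect being an automorphism). I would then combine naturality of flows, $f\circ\Phi^X_t=\Phi^{f_*X}_t\circ f$, with the identity $\Phi^X_t(e)=\exp(t\xi)$ above to get $f(\exp(t\xi))=\exp(t\,\phi\xi)$, i.e. $f=\exp\circ\,\phi\circ\exp^{-1}$ as germs. Finally, since by formula~(\ref{inner}) every automorphism of $\mathfrak{aff}(\mathbf{R})$ is $\mathrm{Ad}_g$ for some $g\in\mathrm{Aff}(\mathbf{R})$, writing $\phi=\mathrm{Ad}_g$ gives $C_g(\exp\xi)=\exp(\mathrm{Ad}_g\xi)=f(\exp\xi)$, so $f=C_g$.

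For part~(\ref{isomleft-ori}), I first reduce the left-translation statement to its parenthetical form: composing an orientation-preserving self-isometry on the left with an appropriate left translation (itself an orientation-preserving isometry, as $\nabla_0$ is left-invariant) yields a self-isometry fixing a point, which by homogeneity may be taken to be $e$. So it suffices to show an orientation-preserving germ of self-isometry fixing $e$ is the identity. By part~(\ref{isomleftuno}) it is $C_g$, and orientation-preservation forces $\det\mathrm{Ad}_g=u>0$, i.e. $g\in\mathrm{Aff}_0(\mathbf{R})$. Now I invoke the finite-index fact used earlier (see~\cite{DG}~3.5 or~\cite{Gro}~3.4.A): the group of germs of self-isometries fixing $e$ contains, with finite index, the group generated by the Killing fields vanishing at $e$; since no nonzero element of $\mathfrak{R}$ vanishes at $e$, this isotropy group is finite, so $C_g$ has finite order. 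As $(C_g)^n=C_{g^n}$ and $Z(\mathrm{Aff}(\mathbf{R}))$ is trivial, $C_{g^n}=\mathrm{id}$ gives $g^n=e$; and because $\mathrm{Aff}_0(\mathbf{R})$ is a simply connected solvable group, hence torsion-free, I conclude $g=e$ and $C_g=\mathrm{id}$.

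The step I expect to demand the most care is the very last one: upgrading the orientation-preserving isotropy at $e$ from \emph{finite} to \emph{trivial}. The orientation hypothesis is genuinely essential, since the non-orientation-preserving inner automorphism $C_{(-1,0)}$ has order two and could a priori be a symmetry; the clean way to rule out nontrivial finite-order elements among the orientation-preserving ones is exactly the torsion-freeness of $\mathrm{Aff}_0(\mathbf{R})$ combined with the triviality of the center of $\mathrm{Aff}(\mathbf{R})$. A secondary point needing attention is the identification $\mathfrak{K}(\nabla_0')=\mathfrak{R}$ in part~(\ref{isomleftuno}), as this is what guarantees that pushforwards of Killing fields remain right-invariant, so that their flows really are left translations and the exponential identity $f(\exp\xi)=\exp(\phi\xi)$ is valid.
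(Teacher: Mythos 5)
Your proof is correct. For part~(\ref{isomleftuno}) you follow essentially the paper's route---an isometry germ fixing~$e$ preserves the Killing algebra, which by hypothesis and a dimension count is the algebra of right-invariant fields on both sides, so it induces an automorphism of~$\mathfrak{aff}(\mathbf{R})$, necessarily inner by~(\ref{inner})---but you make explicit a step the paper leaves implicit, namely why the induced automorphism of the Killing algebra determines the germ itself: your identity $f(\exp(t\xi))=\exp(t\,\phi\xi)$, obtained from the fact that right-invariant fields flow by left translations, is a clean way to do this (the paper tacitly relies on the rigidity of affine maps, i.e.\ that an isometry fixing a point is determined by its action on the Killing algebra there). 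For part~(\ref{isomleft-ori}) your argument is genuinely different. The paper identifies the candidate isotropy with the inner automorphism $(X_0,Y_0)\mapsto(X_0+\lambda Y_0,\mu Y_0)$ and solves the linear system coming from~(\ref{actionadjconn}), finding that a nontrivial orientation-preserving solution forces $(\gamma,\epsilon,\phi)=(0,0,0)$ and hence the extra Killing field $v\,\indel{v}$, contradicting the hypothesis; this computation has the side benefit of pinning down the only possible nontrivial isotropy as the order-two, orientation-reversing case $\mu=-1$, which is exploited later in the normal-form analysis. You instead use the hypothesis to conclude that the isotropy algebra at~$e$ is trivial, invoke the Gromov/DG finiteness of the isotropy group of germs of a rigid structure (the same fact the paper cites elsewhere), and then kill the finite-order element $C_g$ by the triviality of $Z(\mathrm{Aff}(\mathbf{R}))$ and the torsion-freeness of $\mathrm{Aff}_0(\mathbf{R})$. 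This is softer and avoids the Christoffel computation, at the cost of appealing to a nontrivial general theorem and of yielding less information about the orientation-reversing isotropy; both uses of the hypothesis that $\mathfrak{K}(\nabla_0)$ is exactly the right-invariant algebra are legitimate.
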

\begin{proof} (\ref{isomleftuno}) Since the two connections are isometric, the dimension of the killing algebra is the same and thus~$\mathfrak{K}(\nabla_0')$ is  is the Lie algebra of right-invariant vector fields.  The isometry must map~$\mathfrak{K}(\nabla_0)$ into~$\mathfrak{K}(\nabla_0')$, this is, must preserve the Lie algebra of right-invariant vector fields at~$e$. Every automorphism of~$\mathfrak{aff}(\mathbf{R})$ comes from the adjoint action of~$\mathrm{Aff}(\mathbf{R})$. Hence the germ of~$\phi$ at the identity is the germ of an inner automorphism of~$\mathrm{Aff}(\mathbf{R})$. (\ref{isomleft-ori}) Consider now the case where~$\nabla_0'=\nabla_0$ and suppose that the inner automorphism is the one associated to~$(X_0,Y_0)\mapsto (X_0+\lambda Y_0,\mu Y_0)$. The associated constants are related according to~(\ref{actionadjconn}): if a connection of the form~(\ref{affinvcon}) is preserved by such an automorphism then the connection~(\ref{affinvcon}) must equal the connection~(\ref{actionadjconn}) when the frames~$(X_0,Y_0)$ and~$(X,Y)$ are identified. The corresponding equations are easy to solve for they are linear in~$\alpha$, $\beta$, $\gamma$, $\delta$, $\epsilon$ and~$\phi$.

If~$\mu\neq -1$, a straightforward computation shows that we must have~$(\gamma,\epsilon,\phi)=(0,0,0)$. In this case, $v\indel{v}$ is a Killing field (which is not right-invariant). If~$\mu=-1$, then any potential isometry has order two and is orientation-reversing.
\end{proof}

\subsection{Killing algebras of connections of Types~$\mathrm{I}$ and~$\mathrm{II}$} The Killing Lie algebra of a left-invariant connection may be  strictly bigger that the Lie algebra of right-invariant vector fields. We will now deal with this question and compute, for the connections of special type of Definition~\ref{defnmarking}, the full Killing algebra.

In the coordinates~$(x,y)=(\log(u),v)$ of~$\mathrm{Aff}_0(\mathbf{R})$,  we have~$X =\indel{x}$, $Y =-e^x\indel{y}$. The connection is thus of the form~(\ref{genericcon}) for
\begin{equation}\label{consy}A=\alpha , \; B=-\beta e^x,  \; C= -\gamma e^{-x}, \; D=\delta-1, \;  E=\epsilon e^{-2x},  \;F=-\phi e^{-x}.\end{equation}

Equation~(\ref{equationkilling})  implies that~$a(x,y)\indel{x}+b(x,y)\indel y$ is a Killing vector field of the connection if its coefficients satisfy the system (\ref{primera}--\ref{sexta}) with the above values of the corresponding functions.

A two-dimensional space of solutions of these equations is given by the vector fields coming from~(\ref{rightinv}) since, by construction, our  connections are left-invariant and hence right-invariant vector fields (generating left translations) are Killing fields. This space is generated by~$(a,b)=(1,y)$ and~$(a,b)=(0,1)$. We will find all the solutions to these equations in two particular  cases, related to special connections of Type~$\mathrm{I}_0$ and~$\mathrm{II}_0$ of Definition~\ref{defnmarking}.

\begin{proposition}\label{typei-ii} For~$\alpha=-1/n$, $n\in\frac{1}{2}\mathbf{N}^*$, $\beta=0$ and parameters~$\epsilon$, $\gamma$ and~$\phi$ that do not vanish simultaneously, for the system~(\ref{primera}--\ref{sexta}) with~(\ref{consy}) we have:
\begin{enumerate}
 \item If~$\delta=1$, the space of solutions has dimension~2 except in the case when~$\alpha=-1$, $\epsilon=-\gamma^2$ and $\phi=-\gamma$. 
 \item If~$\delta=1+\alpha$ and~$\alpha\neq -1$, the space of solutions has dimension~2 except in the case when $\alpha=-1/2$ and $\phi=2\gamma$.
\end{enumerate}
In both exceptional cases the space of solutions has dimension~3 and is isomorphic to~$\mathfrak{sl}(2,\mathbf{R})$.
\end{proposition}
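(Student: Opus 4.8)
The plan is to solve the linear system (\ref{primera}--\ref{sexta}) explicitly, exploiting the special structure that the chosen Christoffel data give it. I would first substitute (\ref{consy}) with $A=\alpha=-1/n$ and $B=0$ into the six equations and record the decisive structural fact that every resulting coefficient is a function of $x$ alone. Consequently, if $(a,b)$ solves the system then so does $(a_y,b_y)$, so the solution space $V$ — finite-dimensional by Lemma~\ref{exponential} — is invariant under $\partial_y$. Since $\partial_y$ acts on coefficient pairs exactly as $\mathrm{ad}_{B_0}$, and $\mathrm{ad}_{A_0}$ likewise preserves $V$, the generators $A_0,B_0$ of (\ref{rightinv}) lie in $V$ with $[A_0,B_0]=B_0$; they furnish the two ``universal'' solutions $(a,b)=(-1,-y)$ and $(a,b)=(0,1)$ present for every choice of parameters. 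The whole question is whether $V$ is strictly larger than $\langle A_0,B_0\rangle$.

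Next I would integrate the system to bound the solutions. Because $B=0$, equation (\ref{segunda}) becomes the constant-coefficient second-order equation $b_{xx}+(2D-\alpha)b_x=0$ in $x$, whose coefficient $2D-\alpha$ equals $-\alpha=1/n$ when $\delta=1$ and $\alpha=-1/n$ when $\delta=1+\alpha$; in both cases it is nonzero, so $b=g(y)+h(y)e^{-(2D-\alpha)x}$ for functions $g,h$ of $y$. Feeding this into (\ref{cuarta}) and then (\ref{sexta}) and separating the linearly independent powers of $e^{-x}$ produces an explicit expression for $a$ together with a closed system of linear ordinary differential equations in $y$ for $g$, $h$ and the remaining integration functions. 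Solving this system is meant to show that every solution is polynomial in $y$, with $\deg_y a\le 1$ and $\deg_y b\le 2$; in particular $\partial_y$ is nilpotent on $V$. Since a short separate check (using (\ref{tercera}) and (\ref{quinta})) shows that $B_0$ spans the one-dimensional space of $y$-independent solutions whenever $(\gamma,\phi,\epsilon)\neq(0,0,0)$, the space $V$ is a single Jordan chain for $\partial_y$, necessarily of length at most $3$. Hence $\dim V\in\{2,3\}$.

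It then remains to decide when the chain has length $3$. A third independent field $C$ must satisfy $\partial_y C=2A_0$ after rescaling, that is $[B_0,C]=2A_0$, which forces the normal form $a=-2y+p(x)$, $b=-y^2+q(x)$; this is exactly the relation placing $\langle A_0,B_0\rangle$ as a Borel subalgebra inside $\mathfrak{sl}(2,\mathbf{R})$. Substituting this ansatz into the three remaining equations (\ref{primera}), (\ref{tercera}) and (\ref{quinta}) and collecting powers of $y$ gives an overdetermined linear system for $(p,q)$; its consistency conditions are algebraic relations among $n,\gamma,\phi,\epsilon$. Working the two cases separately, I expect a nonzero solution to exist exactly in the two announced situations — $\alpha=-1$, $\phi=-\gamma$, $\epsilon=-\gamma^2$ in the case $\delta=1$, and $\alpha=-1/2$, $\phi=2\gamma$ in the case $\delta=1+\alpha$ — with $p,q$ then explicit. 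In these cases one verifies directly from $C$ that $[A_0,C]=-C$ (together with the already known $[A_0,B_0]=B_0$ and $[B_0,C]=2A_0$), so that $V$ carries the bracket relations of $\mathfrak{sl}(2,\mathbf{R})$; in all other cases $V=\langle A_0,B_0\rangle$ and $\dim V=2$.

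The conceptual steps — $\partial_y$-invariance, the Jordan-chain bound, and the $\mathfrak{sl}(2,\mathbf{R})$ normalization of the extra field — are short. The main obstacle is the bookkeeping of the middle and final paragraphs: one must separate the $x$- and $y$-variables cleanly and keep track of the several distinct exponentials in $x$ (mixed powers of $e^{-x}$ and $e^{-x/n}$) so that the exact exceptional conditions surface as the solvability conditions of the overdetermined ODE system for $(p,q)$, rather than being lost among the terms. A secondary point demanding care is the boundary of the degree bootstrap, where one must confirm that the coefficient functions $g,h$ cannot pick up genuine exponential-in-$y$ behaviour, which is what ultimately guarantees $\dim V\le 3$ and excludes spurious solutions outside the listed cases.
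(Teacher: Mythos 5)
Your structural framework is sound and genuinely different from the paper's. The paper integrates the system~(\ref{primera}--\ref{sexta}) directly, splitting into the cases $\phi\neq\gamma$, $\phi=\gamma$, $\gamma=0$, etc., and reading off the exceptional parameters from the surviving ODEs. You instead observe that the coefficients~(\ref{consy}) with $\beta=0$ depend on $x$ alone, so the solution space $V$ is $\partial_y$-invariant, identify $\partial_y|_V$ with $\mathrm{ad}_{B_0}$, and reduce the proposition to the Jordan structure of this operator. This is a nice reorganization; note in passing that the nilpotency of $\partial_y$ on $V$, which you propose to extract from degree bounds, is actually free: since $A_0,B_0\in V$ and $[A_0,B_0]=B_0$, the identity $\mathrm{tr}([\mathrm{ad}_{A_0},(\mathrm{ad}_{B_0})^k])=0$ gives $\mathrm{tr}((\mathrm{ad}_{B_0})^k)=0$ for all $k\geq 1$, so $\mathrm{ad}_{B_0}|_V$ is nilpotent and no exponential-in-$y$ behaviour can occur. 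Your normalization of a putative third field to $a=-2y+p(x)$, $b=-y^2+q(x)$ is also correct and consistent with the paper's extra solutions $(y+\gamma^{-1}e^x,\tfrac12 y^2)$ and $(2y,y^2)$ up to scale.

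The gap is that the proof's actual content is deferred rather than carried out. Three computations are announced but not done: (i) that the $y$-independent solutions reduce to $\langle B_0\rangle$ when $(\gamma,\phi,\epsilon)\neq(0,0,0)$ --- this itself needs (\ref{primera}), (\ref{cuarta}) and (\ref{sexta}) in addition to (\ref{tercera}) and (\ref{quinta}), and a sub-case analysis around $\gamma+\phi=0$; (ii) the degree bounds $\deg_y a\leq 1$, $\deg_y b\leq 2$, which is what caps the chain at length $3$ and which cannot be read off without first integrating in $x$ (a term like $a_2(x)y^2$ with $a_2$ exponential in $x$ is not excluded by inspection, since $E$, $C$, $F$ carry compensating factors $e^{-x}$, $e^{-2x}$); and (iii), most importantly, the consistency conditions of the overdetermined system for $(p,q)$, which is precisely where the exceptional values $(\alpha,\phi,\epsilon)=(-1,-\gamma,-\gamma^2)$ and $(\alpha,\phi)=(-\tfrac12,2\gamma)$ come from. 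You write that you ``expect'' these to be the solvability conditions; until that elimination is performed --- and it is essentially the same bookkeeping over powers of $e^{x}$ and $e^{\alpha x}$ that occupies the paper's proof --- the statement of the proposition is not established. So the route is viable, but as written this is a plan for a proof rather than a proof.
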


\begin{proof} Let us begin by the first case, $\delta=1$. Equation (\ref{segunda}) yields
$b= f_1(y)+f_2(y)e^{\alpha x}$ for some functions~$f_1$ and~$f_2$. Equation~(\ref{cuarta}) becomes
$e^xf_2'=(\phi-\gamma)f_2$. Since~$f_2$ is a function of~$y$, both sides of the equation must vanish. Now we consider two cases: $\phi \neq \gamma$ and $\phi=\gamma$.
\begin{enumerate}

\item  if $\phi\neq \gamma$, then  $f_2\equiv 0$. From~(\ref{primera}), $a=f_3(y)+f_4(y)e^{-\alpha x}$. Equation~(\ref{tercera}) minus equation~(\ref{sexta}) becomes
$$[\alpha f_3'-f_1'']e^{x(1+\alpha)}+[(\gamma-\phi)(f_3-f_1')]e^{\alpha x}+[(\gamma-\phi) f_4]=0.$$
The expressions in brackets are functions of~$y$ and thus, if there are no linear relations between~$1$, $e^{x(1+\alpha)}$ and~$e^{\alpha x}$ (this is, if~$\alpha\neq -1$),  the expressions in brackets must vanish. In particular, we must have~$f_3=f_1'$.

If~$\alpha\neq -1$, $f_4\equiv 0$ and the equation becomes $f_1''\equiv 0$. The space of solutions is two-dimensional. 

If~$\alpha=-1$, $f_4=2f_1''(\gamma-\phi)^{-1}$. Equation~(\ref{sexta}) is now~$(\gamma+\phi)f_1''=0$. If~$\phi+\gamma\neq 0$, then~$f_1''=0$ and $f_4=0$. As before,  the space of solutions is two-dimensional. Otherwise, $\phi=-\gamma$. In this last case we are left with equation~(\ref{quinta}):
$$e^{2x} f_1^{(\text{iv})}-2e^x \gamma f_1'''-3(\epsilon+\gamma^2)f_1''=0.$$
We must then have~$\epsilon=-\gamma^2$, $f_1'''=0$ . The space of solutions has dimension~$3$. An extra solution is given by~$(a,b)=(y+\gamma^{-1}e^x,\frac{1}{2}y^2)$ and the corresponding Lie algebra is isomorphic to~$\mathfrak{sl}(2,\mathbf{R})$.
\item  if $\phi=\gamma$, then $f_2$ is constant. Either
\begin{enumerate}
\item $\gamma=0$. Equation~(\ref{primera}) becomes~$a_{xx}+\alpha a_x=0$ and thus
$a=f_3(y)+f_4(y)e^{-\alpha x}$.
Equation~(\ref{sexta}) reads~$f_1''=\alpha\epsilon e^{(\alpha-2)x}f_2$ and equation~(\ref{tercera}) is $f_3'=-\epsilon  e^{(\alpha-2)x}f_2$. Since~$\epsilon\neq 0$ and~$\alpha(\alpha-2)\neq 0$, $f_2\equiv0$, $f_3'=0$ and~$f_1''=0$ and we are left with equation~(\ref{quinta}):
$$e^{2x}f_4''+\epsilon (\alpha-2)f_4+2\epsilon e^{\alpha x}(f_1'-f_3)=0;$$
we must have $f_4\equiv 0$ and~$f_3=f_1'$. The space of solutions is two-dimensional.
\item $\gamma\neq 0$. We can solve for~$a(x,y)$ in equation~(\ref{sexta}) and obtain
$$a(x,y)=f_1'-\gamma^{-1}e^xf_1''+\alpha\epsilon\gamma^{-1}e^{(\alpha-1)x}f_2.$$
Equation~(\ref{tercera}) is
$$e^{3x}(\alpha+1)f_1'''-\gamma(\alpha-1)e^{2x}f_1''-2\gamma\epsilon\alpha  e^{\alpha x}f_2.$$
and thus~$f_1''=0$. The equation becomes~$\epsilon f_2=0$. If~$\epsilon\neq 0$ then~$f_2=0$ and the space of solutions is two-dimensional. If~$\epsilon=0$ the first equation reduces also to~$f_2=0$. 
\end{enumerate}
\end{enumerate}
This proves the Proposition in the case~$\delta=1$. Let us assume, from now on, that~$\delta=1+\alpha$. From equation~(\ref{segunda}), $b=f_1(y)+f_2(y)e^{-\alpha x}$. Solving for~$a$ in~(\ref{cuarta}) we get
$$a(x,y)=\frac{\phi-\gamma}{(\alpha+1)e^{(\alpha+1)x}}f_2-\frac{1}{\alpha e^{\alpha x}}f_2'+f_3(y).$$
Equation~(\ref{primera}) is now~$(\phi+2\gamma\alpha-\gamma)f_2=0$; equations~(\ref{tercera}), (\ref{quinta}) and~(\ref{sexta}) are
$$f_2''+[c_1f_2'] e^{-x}+\left[\frac{ \gamma\phi-\epsilon\alpha-\epsilon\alpha^2-\gamma^2}{\alpha+1}f_2\right]e^{-2x}+\left[\gamma (f_3-f_1')\right]e^{(\alpha-1)x}=0,$$
$$[c_2f_2''']e^{x} + c_3f_2''+[c_4 f_2']e^{-x}+\left[\frac{\epsilon(\alpha-1)(\phi-\gamma)}{1+\alpha}f_2\right]e^{-2x}+f_3''e^{(\alpha+1)x}+[c_5f_3']e^{\alpha x}-[2\epsilon (f_3-f_1')]e^{(\alpha-1)x}=0,$$
$$f_2''+[c_6f_2']e^{-x}-\left[\frac{\epsilon\alpha-\phi\gamma+\epsilon\alpha^2+\phi^2}{\alpha+1}f_2\right]e^{-2x}-[\phi(f_3-f_1')] e^{(\alpha-1)x}-[2\alpha f_3'+f_1'']e^{\alpha x}=0,$$
for some constants~$c_i$. In this equation, each summand is a function of~$y$ multiplying some power of~$e^x$. The conditions imposed on~$\alpha$ guarantee that, in the last three equations, the power of~$e^x$ that appears as coefficient for~$f_3-f_1'$ is different from the powers appearing in the other summands. Since~$\epsilon$, $\gamma$ and~$\phi$ do not vanish simultaneously, $f_3=f_1'$. By the same argument, the coefficients of~$f_2$ in these four equations do not all vanish and thus $f_2\equiv 0$. The system reduces to the equations
$$e^xf_1'''=(2\gamma-\phi)f_1'',\;(1+2\alpha)f_1''=0.$$

Either $f_1''=0$ and the space of solutions has dimension~$2$, or~$\alpha=-1/2$, $\phi=2\gamma$ and~$f_1'''=0$:  the space of solutions has dimension three and an extra solution is given by~$(a,b)=(2y,y^2)$: the Killing algebra is isomorphic to~$\mathfrak{sl}(2,\mathbf{R})$. 
\end{proof}

\section{Quasihomogeneous connections, affine Killing subalgebra}   \label{Quaconnaffine}
The aim of this section is to prove Theorem~\ref{thm-local} in the case where the Killing algebra of dimension two constructed in Lemma~\ref{dim2}  is  isomorphic to $\mathfrak{aff}(\mathbf{R})$. Following the terminology of Theorem~\ref{thm-local}, we will prove:

\begin{proposition}\label{propaffine}   Let~$\nabla$ be a symmetric analytic connection in a neighborhood of the origin of~$\mathbf{R}^2$ and let~$A$ and~$B$ be Killing fields of~$\nabla$ such that~$[A,B]=B$ and such that~$A$ and~$B$ are linearly independent in an open set that accumulates to the origin,  but that does not contain the origin. 
\begin{itemize}
\item If~$\mathfrak{K}(\nabla)$ is the one generated by~$A$ and~$B$ then: 
\begin{enumerate} 
\item\label{pa1}  If $A$ and $B$ vanish at the origin, then $\nabla$ is locally isomorphic   to a connection of  Type~$\mathrm{II}^0$.

\item\label{pa2} If the Lie algebra generated by $A$ and $B$ is of rank $1$ at the origin and the isotropy algebra at the origin  is generated by a  semisimple element , then $\nabla$ is locally isomorphic  to a connection of  Type~$\mathrm{I}$.

\item\label{pa3} If the Lie algebra generated by $A$ and $B$ is of rank $1$ at the origin and the isotropy algebra at the origin  is generated by a  unipotent  element, then $\nabla$ is locally isomorphic  to a connection of  Type~$\mathrm{II}^1$.
\end{enumerate}
\item If~$\mathfrak{K}(\nabla)$ contains properly the Lie algebra generated by~$A$ and~$B$,  then either~$\nabla$ is flat,  or  $\nabla$ is locally isomorphic to a connection of Type~$\mathrm{III}$. In this last case, $\mathfrak{K}(\nabla)\approx\mathfrak{sl}(2,\mathbf{R})$.
\end{itemize}
\end{proposition}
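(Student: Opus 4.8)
The plan is to transport the problem to the affine group, where the connection becomes left-invariant and is governed by the six constants of~(\ref{affinvcon}), and then to recover the germ at the origin by examining how the left-invariant frame degenerates there. On the open set~$U$ where~$A$ and~$B$ are independent, the relation~$[A,B]=B$ lets me identify~$\langle A,B\rangle$ with the algebra of right-invariant fields~(\ref{rightinv}); the left-invariant fields~$X_0,Y_0$ commute with~$A$ and~$B$, so Lemma~\ref{killingcentralizer} forces~$\nabla$ to be constant in the frame~$(X_0,Y_0)$, that is,~(\ref{affinvcon}) holds on~$U$ with constants~$\alpha,\dots,\phi$. Since the~$Y_0$-component of~$\nabla_X X$ in~(\ref{actionadjconn}) is a cubic polynomial in~$\lambda$, it has a real root; choosing that~$\lambda$ and replacing~$X_0$ by the semisimple field~$X_0+\lambda Y_0$, I may assume~$\beta=0$, so that~$(\nabla,X_0)$ is a marked connection in the sense of Definition~\ref{defnmarking}.

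First I would produce, in each of the cases~(\ref{pa1})--(\ref{pa3}), an explicit normal form for~$A$ and~$B$ in coordinates~$(x,y)$ centered at the origin, using Lemma~\ref{exponential} to linearize the Killing fields that vanish there: in~(\ref{pa1}) both~$A,B$ are linear with~$[A,B]=B$; in~(\ref{pa2}) the field~$A$ vanishes with semisimple linear part and~$B(0)\neq 0$, so I may take~$B=\indel{y}$ and~$A$ diagonal; in~(\ref{pa3}) the isotropy is unipotent, so~$A$ has a nilpotent linear part. Solving the linear system~$[X_0,A]=[X_0,B]=0$ then yields~$X_0$ and~$Y_0$ explicitly; they involve fractional powers such as~$x^{-n}$ and are meromorphic at the origin (the exponential~$e^x$ in the left-invariant frame~$Y=-e^x\indel{y}$ of~(\ref{consy}) becomes such a power under the coordinate change identifying the open orbit with~$\mathrm{Aff}_0(\mathbf{R})$). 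The eigenvalue ratio of the semisimple field fixes~$\alpha(\nabla,X_0)=-1/n$, while the isotropy type pins down~$\delta(\nabla,X_0)$: the semisimple case gives~$\delta=1$ (Type~$\mathrm{I}_0(n)$) and the unipotent case gives~$\delta=1+\alpha$ (Type~$\mathrm{II}_0(n)$).

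The decisive step is to feed the meromorphic frame~$(X_0,Y_0)$ back into~(\ref{affinvcon}), solve for the Christoffel symbols in the coordinates~$(x,y)$, and impose that they extend real-analytically across the origin. This should force~$\alpha=-1/n$ and, when~$n\notin\mathbf{Z}$, the vanishing~$\gamma=\phi=0$ needed to suppress the non-integral powers~$x^n$; the surviving terms then carry the exponents~$2n+1$ (Type~$\mathrm{I}$) or~$2n-1,2n-2,2n-3,2n-4$ (Type~$\mathrm{II}$), which are analytic exactly when~$n\in\frac12\mathbf{Z}$, their positivity yielding the bounds~$n\geq\frac12$ and~$n\geq\frac52$ respectively. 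Transporting the normalized connection back to the~$(x,y)$ coordinates reproduces the stated germ, taken at~$(0,0)$ in case~(\ref{pa1}) (Type~$\mathrm{II}^0$) and at~$(0,1)$ in case~(\ref{pa3}) (Type~$\mathrm{II}^1$), according to the point where the isotropy vanishes.

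It remains to separate the two bullets, which I would do by applying Proposition~\ref{typei-ii} to the special marked connection just obtained. Its Killing algebra is two-dimensional---hence equal to~$\langle A,B\rangle$, giving the first bullet with~$\nabla$ of Type~$\mathrm{I}$, $\mathrm{II}^0$ or~$\mathrm{II}^1$---except in the exceptional cases of that Proposition: $\alpha=-1$, $\epsilon=-\gamma^2$, $\phi=-\gamma$ for Type~$\mathrm{I}_0$ (the excluded triple~$(n,\phi,\epsilon)=(1,-\gamma,-\gamma^2)$), and~$\alpha=-\frac12$, $\phi=2\gamma$ for Type~$\mathrm{II}_0$ (the value~$n=2$, excluded from Type~$\mathrm{II}$). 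There the algebra is three-dimensional and isomorphic to~$\mathfrak{sl}(2,\mathbf{R})$, so~$\mathfrak{K}(\nabla)$ strictly contains~$\langle A,B\rangle$; transporting back, $\nabla$ is then either flat or the~$\mathfrak{sl}(2,\mathbf{R})$-invariant germ of Type~$\mathrm{III}$. The hard part is the analyticity computation of the third paragraph: handling the meromorphic frame, isolating exactly the constraints on~$\alpha,\dots,\phi$, and matching the integrality and positivity of the resulting exponents to the half-integer ranges of~$n$.
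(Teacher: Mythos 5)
Your proposal follows essentially the same route as the paper: linearize the Killing fields at the origin, compute the meromorphic centralizer frame $(X,Y)$, write $\nabla$ with constant coefficients in that frame via Lemma~\ref{killingcentralizer}, force the Christoffel symbols to be analytic at the origin to pin down $\alpha$, $\beta$, $\delta$ and the admissible half-integers~$n$, and invoke Proposition~\ref{typei-ii} to detect the exceptional parameters where $\mathfrak{K}(\nabla)$ jumps to~$\mathfrak{sl}(2,\mathbf{R})$. Two points to watch when filling in the details: the normal forms for the pair~$(A,B)$ do not come from Lemma~\ref{exponential} alone (the paper needs a case analysis on resonances, the Briot--Bouquet Lemma~\ref{briotbouquet} to remove higher-order terms of the non-vanishing field, and the non-diagonalizable subcase of the isotropy, which leads to a flat connection), and $\beta=0$ is ultimately forced by analyticity of~$\Gamma_{11}^2$ rather than by a real root of the cubic in~(\ref{actionadjconn}), which need not exist when~$\epsilon=0$.
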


In subsection~\ref{fixed points} we prove part (\ref{pa1}) of Proposition~\ref{propaffine}. Parts (\ref{pa2}) and (\ref{pa3}) will be respectively proved in sections~\ref{Semi-simplestabilizer} and~\ref{Unipotentstabilizer}.

\subsection{Fixed points}\label{fixed points}  In this subsection $A$ and~$B$ vanish simultaneously at~$0$ (which is then  a fixed point for the local action  of the Lie algebra generated by $A$ and $B$).  By Lemma~\ref{exponential}, $A$ and $B$  are both linear in exponential coordinates.  Since the Lie algebra  generated by $A$ and $B$ is solvable, they are  simultaneously upper-triangular. Since  $B$ is a  nonzero  commutator, we may suppose that $B=x\indel{y}$. Up to replacing~$A$ by~$A+\lambda B$, we have, for some~$n\in\mathbf{R}^*$,
$$A= \frac{1}{n}x\del{x}+\left(\frac{1}{n}-1\right)y\del{y},\; B=x\del{y}.$$
These fields are linearly independent in the complement of~$\{x=0\}$. The vector fields that, in the half plane~$\{x>0\}$, commute with these two are linear combinations of
$$X=-\frac{1}{n}\left(x\del{x}+y\del{y}\right),\; Y=-x^{1-n} \del{y}.$$
They satisfy the relation~$[X,Y]=Y$. By Lemma~\ref{killingcentralizer}, the connection is of the form~(\ref{affinvcon}).

Solving for the Christoffel symbols, we have
\begin{multline*}\Gamma_{11}^1=-{\textstyle \frac{1}{n}}\epsilon x^{2n-3}y^2+2\gamma x^{n-2}y-(n\alpha+1)x^{-1},\\
\Gamma_{11}^2=-{\textstyle \frac{1}{n}}\epsilon x^{2n-4}y^3+(2\gamma-\phi)x^{n-3}y^2+(1-2n-n\alpha+2n\delta)x^{-2}y-\beta n^2 x^{-n-1},\\
\Gamma_{12}^1={\textstyle \frac{1}{n}}\epsilon x^{2n-2}y-\gamma x^{n-1},\;
\Gamma_{12}^2={\textstyle \frac{1}{n}}\epsilon x^{2n-3}y^2+(\phi-\gamma)x^{n-2}y+(n-1-n\delta)x^{-1},\\
\Gamma_{22}^1=-{\textstyle \frac{1}{n}}\epsilon x^{2n-1},\;
\Gamma_{22}^2=-{\textstyle \frac{1}{n}}\epsilon x^{2n-2}y-\phi x^{n-1}.
\end{multline*}

Since~$\nabla$ is supposed to be analytic  in the  neighborhood of the origin, we must have~$\alpha=-1/n$ and~$\delta=1-1/n$.

If one of the Christoffel symbols does not vanish (if~$\nabla$ has a chance of not being flat), we must have~$n\in\frac{1}{2}\mathbf{Z}$. If~$n<2$, then all of the Christoffel symbols must vanish except, possibly, for~$\Gamma_{11}^2$, which will be a function of~$x$. In this case the curvature of~$\nabla$ vanishes and~$\nabla$ is flat. We are left with the case~$n \geq  2$ which implies, in particular, that~$\beta=0$. If~$\epsilon$, $\phi$ and~$\gamma$  vanish simultaneously, all the Christoffel symbols vanish and the connection is flat.  Leaving the flat  case aside, $\nabla$ is analytic at the origin in the following cases: 
\begin{itemize}
\item $n=2$, $\phi=2\gamma$.
\item $n\in\mathbf{Z}$, $n \geq 3$;
\item $n\notin\mathbf{Z}$, $\gamma =0$, $\phi=0$ and $n\geq 5/2$;
\end{itemize}

In the first case, we have a connection of Type~$\mathrm{III}$. According to Proposition~\ref{typei-ii}, $\mathfrak{K}(\nabla)\approx \mathfrak{sl}(2,\mathbf{R})$ and is, in the chosen coordinates, the Lie algebra of divergence-free linear vector fields. 

In the other two, the connections are  of Type~$\mathrm{II}^0(n)$.

\subsection{One-dimensional orbits} We will now analyze the case when the rank at the origin of the Killing algebra generated by $A$ and $B$ is one.

\subsubsection{Semisimple stabiliser}\label{Semi-simplestabilizer} We prove now part (\ref{pa2}) of Proposition~\ref{propaffine}. We will now suppose that~$B$ does not vanish at the origin, but that~$A$ does. In exponential coordinates at the origin, the Killing vector field~$A$ is linear. Let~$B=\sum_{i=0}^\infty B_i$ where~$B_i$ is a homogeneous polynomial vector field of degree~$i$. By homogeneity, we have~$[A,B_i]=B_i$ and, in particular, $[A,B_0]=B_0$. Thus, $-1$ is an eigenvalue of~$A$ and $B_0(0)=B(0)$ is the corresponding eigenvector. In particular, $A$ has real eigenvalues. It may or may not be diagonalizable.\\

\paragraph{Diagonalizable case}  If~$A$ is diagonalizable, we may suppose that it is of the form~$A=\lambda x\indel{x}-y\indel{y}$, with $\lambda\in\mathbf{R}$ and that~$B_0=\indel{y}$. For the sake of computations, let $B=y^{-1}xf(x,y)\indel{x}+g(x,y)\indel y$ with~$g$ an analytic function such that $g(0)= 1$ and~$f$ a meromorphic function such that~$B$ is analytic (such that~$xf$ is an analytic function which admits $y$ as a factor). If
$$f=\sum_{i=-1, j=1} a_{ij}x^iy^j,\; g=\sum_{i=0, j=0} b_{ij}x^iy^j$$ the equation~$[A,B]=B$ reads, at the formal level, $(\lambda i-j)a_{ij}=0$, $(\lambda i-j)b_{ij}=0$ with~$i$ and~$j$ in the corresponding range. There are four possibilities
\begin{itemize}
\item $\lambda\notin \mathbf{Q}$. This implies that $f\equiv 0$, $g\equiv 1$ and thus
\begin{equation}\label{una}A=\lambda x\del{x}-y\del{y} , \; B=\del{y}.\end{equation}
\item $\lambda\in\mathbf{Q}$, $\lambda <0$. This implies that~$g\equiv 1$, that~$\lambda\in\mathbf{Z}$ and that~$f=cx^{-1}y^{-\lambda}$ for some~$c\in\mathbf{R}$ (we may suppose~$c=1$ up to rescaling~$x$). In the coordinates~$(\lambda x+y^{-\lambda},y)$, we still have~(\ref{una}).
\item $\lambda=0$, $f\equiv 0$, $g=g(x)$, but in this case~$A$ and~$B$ are linearly independent nowhere.
\item $\lambda=p/q$ for relatively prime and positive integers~$p$ and $q$. This implies that~$f$ and~$g$ are functions of~$u=x^qy^p$, with~$f(0)=0$. If~$h$ and~$k$ are functions of~$u$,  with~$k(0)=1$, $h(0)=1$, satisfying the differential equations
$$uh'=-\frac{fh}{qf+pg},\; uk'=\frac{1-gk}{qf+pg},$$
then, in the coordinates~$(xh,yk)$, we get  the same normal form for $A$ and $B$ as in~(\ref{una}). 
\end{itemize}

The existence of solutions to the last equations is guaranteed by the following Lemma, whose proof may be found in~\cite[\S 12.6]{ince}.

\begin{lemma}[Briot-Bouquet]\label{briotbouquet} For $i=1,2$, let~$F_i(x_1,x_2,t)$ be analytic functions in the neighborhood of~$p=(a_1,a_2,0)\in\mathbf{R}^3$ such that~$F_i(a_1,a_2,0)=0$. If neither of the eigenvalues of~$\partial F_i/\partial x_j|_{p}$ is a strictly positive integer, then the system of ordinary differential equations~$tx_i'=F_i(x_1,x_2,t)$ has an analytic solution~$(x_1(t),x_2(t))$ with~$x_i(0)=a_i$.\end{lemma}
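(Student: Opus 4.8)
The plan is to prove this by the classical method of majorant series, the only genuine input being the non-resonance hypothesis on the eigenvalues. First I would translate the problem to the origin, setting $y_i=x_i-a_i$, so that we seek a solution with $y_i(0)=0$ of $t\,y_i'=F_i(y+a,t)$, where now $F_i(a,0)=0$. Write $\Lambda=(\partial F_i/\partial x_j|_p)$ for the $2\times 2$ matrix of the hypothesis, with (real or complex) eigenvalues $\lambda_1,\lambda_2$, and split $F_i(y+a,t)=\sum_j\Lambda_{ij}y_j+G_i(y,t)$, where $G_i$ is analytic near the origin with $G_i(0,0)=0$ and with vanishing first-order partial derivatives in the $y$-variables at the origin. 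Thus every monomial of $G_i$ is either a power $t^r$ with $r\ge 1$, or contains a factor $y_j$ together with at least one further factor.

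Next I would produce the formal power series solution. Substituting $y_i(t)=\sum_{k\ge 1}c_{i,k}t^k$ into $t\,y_i'=F_i$ and comparing coefficients of $t^k$ yields, for each $k\ge 1$, a linear system
$$(k\,\mathrm{Id}-\Lambda)\,\mathbf{c}_k=\mathbf{P}_k,$$
where $\mathbf{c}_k=(c_{1,k},c_{2,k})$ and $\mathbf{P}_k$ is a fixed polynomial in the Taylor coefficients of the $G_i$ and in the previously determined vectors $\mathbf{c}_1,\ldots,\mathbf{c}_{k-1}$ (the structure of $G_i$ described above guarantees that only indices $\ell<k$ occur). Here the hypothesis enters decisively: since $\det(k\,\mathrm{Id}-\Lambda)=(k-\lambda_1)(k-\lambda_2)$ and no eigenvalue is a strictly positive integer, the matrix $k\,\mathrm{Id}-\Lambda$ is invertible for every $k\ge 1$, so the $\mathbf{c}_k$ are determined uniquely and recursively. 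Moreover, because $(k\,\mathrm{Id}-\Lambda)^{-1}\to 0$ as $k\to\infty$ while each individual term is finite, there is a uniform bound $\|(k\,\mathrm{Id}-\Lambda)^{-1}\|\le C$ for all $k\ge 1$.

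Then I would establish convergence by majorants. Cauchy estimates for the analytic functions $G_i$ give a common majorant $\widehat\Phi(W,t)$, a convergent series with non-negative coefficients, no constant term, and whose $W$-linear part vanishes at $t=0$, dominating every $G_i$ after replacing both $y_1,y_2$ by the single variable $W$. Using $\|(k\,\mathrm{Id}-\Lambda)^{-1}\|\le C$ in the recursion gives $|c_{i,k}|\le C|\mathbf{P}_k|$, and one checks that the scalar functional equation $W=C\,\widehat\Phi(W,t)$ generates, coefficient by coefficient, a recursion with non-negative coefficients that dominates the one for the $|c_{i,k}|$. Since $\partial_W[W-C\,\widehat\Phi(W,t)]|_{(0,0)}=1\neq 0$, the analytic implicit function theorem provides a unique analytic solution $W(t)=\sum_{k\ge 1}W_k t^k$ with $W(0)=0$; an induction then gives $|c_{i,k}|\le W_k$ for all $k$. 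As $W$ is analytic near $0$, the series $\sum c_{i,k}t^k$ converge in a neighborhood of $t=0$, producing the desired analytic solution $(x_1(t),x_2(t))$.

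I expect the main obstacle to be the convergence step: exhibiting a concrete common majorant $\widehat\Phi$ for the $G_i$ and verifying that $W=C\,\widehat\Phi(W,t)$ dominates the vector recursion term by term, the delicate point being the bookkeeping in which the uniform constant $C$ replaces $(k\,\mathrm{Id}-\Lambda)^{-1}$. The formal solvability, by contrast, is immediate once the non-resonance condition is translated into the invertibility of $k\,\mathrm{Id}-\Lambda$ for all $k\ge 1$.
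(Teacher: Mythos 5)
Your argument is correct and is essentially the standard proof of this classical lemma: the paper itself does not prove it, but refers to Ince (\S 12.6), where exactly this scheme is carried out --- the non-resonance hypothesis gives invertibility of $k\,\mathrm{Id}-\Lambda$ for all integers $k\ge 1$ (hence a unique formal solution and a uniform bound on the inverses), and convergence follows by the method of majorants. There is nothing substantive to add or correct.
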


We may thus suppose that the vector fields $A$ and $B$ are given by the expressions~(\ref{una}) with~$\lambda\neq 0$. We introduce the notation $\lambda=1/n$, $n\in\mathbf{R}^*$. The vector fields $A$ and $B$ are linearly independent in the half plane~$\{x>0\}$.  The vector fields in this half-plane that simultaneously commute with them are linear combinations of
$$X=-\frac{1}{n}x\del{x},\;Y=-x^{-n}\del{y}.$$

As in the previous case, $\nabla$ admits the expression~(\ref{affinvcon}), with respect to the moving frame $(X,Y)$.
Solving, as before, for the Christoffel symbols, we have
\begin{multline*}\Gamma_{11}^1=-(n\alpha+1)x^{-1},\; \Gamma_{11}^2=-n^2\beta x^{-n-2},\;
 \Gamma_{12}^1=-\gamma x^n,\\
\Gamma_{12}^2=n(1-\delta)x^{-1},\; \Gamma_{22}^1=-{\textstyle \frac{1}{n}}\epsilon x^{2n+1},\; \Gamma_{22}^2=-\phi x^n.\end{multline*}

Since $\Gamma_{ij}^k$ are analytic functions at the origin, we  conclude that $\delta=1$ and~$\alpha=-1/n$, which implies $\Gamma_{11}^1=\Gamma_{12}^2 \equiv 0$. In fact we get more: either~$\Gamma_{ij}^k\equiv 0$, for all $1 \leq i,j,k \leq 2$ or~$n\in\frac{1}{2}\mathbf{Z}$. If~$n<-1/2$, then all of the Christoffel symbols must vanish except, possibly, for~$\Gamma_{11}^2$, which will be a function of~$x$. If~$n=-1/2$, all of the Christoffel symbols must vanish except for~$\Gamma_{22}^1$, which will be constant. In both cases the curvature of~$\nabla$ vanishes. If~$n=0$ the connection cannot be analytic.

We thus conclude that~$n>0$ and, in consequence, that~$\beta=0$. If~$\epsilon$, $\gamma$ and~$\phi$ all vanish,  the connection is flat, so we will suppose that one of these constants does not vanish. We have a non-flat analytic connection in the following situations:
\begin{itemize}
\item $n\in\mathbf{Z}$, $n\geq 1$,
\item $n\in\frac{1}{2}\mathbf{Z}$, $n\geq\frac{1}{2}$,  $\gamma=0$, $\phi=0$.
\end{itemize}

The connection induced in the right half-plane is an invariant connection of Type~I, with respect to the marking $X$.
By Proposition~\ref{typei-ii}, the Killing algebra is two-dimensional (and hence the one generated by~$A$ and~$B$) except in the case where~$n=1$ and~$\phi=-\gamma$, $\epsilon=-\gamma^2$,  with $\gamma \neq 0$, where it has dimension three. In this last case we have  the extra Killing field $2(xy+\gamma^{-1})\indel{x}-y^2\indel{y}$ and~$\nabla$ is thus locally homogeneous (we do not consider this last situation, since $\nabla$ is supposed not to be locally homogeneous). We thus find the connections of Type~$\mathrm{I}$ of Theorem~\ref{thm-local}. \\

\paragraph{Non-diagonalizable case} We may suppose that~$A=(- x+y)\indel{x}-y\indel{y}$ and that~$B_i=f_i\partial/\partial x+g_i\partial/\partial y$ with~$f_i$ and~$g_i$ homogeneous polynomials of degree~$i$. Up to multiplying~$B$ by a constant, we assume~$f_0=1$. The equation~$[A,B_i]=B_i$ splits (via Euler's relation) into the equations
$y  \partial g_i/\partial x=ig_i$, $y \partial f_i/\partial x=if_i$.
Hence, for every~$k\in\mathbf{N}$, $y \partial^k g_i/\partial x^k=i^kg_i$. Since~$g_i$ and~$f_i$ are polynomials, $g_i\equiv 0$, for all $i$,  and~$f_i\equiv 0$, for all ~$i\neq 0$. Thus,
$$A=(y-x)\del{x}-y\del{y},\; B=\del{x}.$$
They are linearly independent in the complement of~$\{y=0\}$. The space of vector fields that commute with these two in~$\{y>0\}$ is generated by
$$X= -y\log y\del{x}+y\del{y},\; Y=-y\del{x}.$$

The connection $\nabla$ admits the expression~(\ref{affinvcon}) with respect to $(X,Y)$.
Solving for the Christoffel symbols, 
\begin{multline*}\Gamma_{11}^1=-\frac{\phi}{y}-\frac{\epsilon}{y}\log y,\; \Gamma_{11}^2= \frac{\epsilon}{y} ,\;
 \Gamma_{12}^1= \frac{\delta-1}{y}+\frac{\gamma-\phi}{y}\log y-\frac{\epsilon}{y}\log^2 y,\\
\Gamma_{12}^2=-\frac{\gamma}{y}+\frac{\epsilon}{y}\log y ,\; \Gamma_{22}^1=\frac{\beta-1}{y}+\frac{2\delta-1-\alpha}{y}\log y +\frac{2\gamma-\phi}{y}\log^2 y -\frac{\epsilon}{y}\log^3 y ,\\ \Gamma_{22}^2= \frac{\alpha-1}{y}-\frac{2\gamma}{y}\log y+\frac{\epsilon}{y}\log^2 y.\end{multline*}
If the connection extends analytically to the origin, $\Gamma_{ij}^k\equiv 0$ and the connection is flat. This finishes the proof of item (\ref{pa2}) in Proposition~\ref{propaffine}. 

\subsubsection{Unipotent stabilizer}\label{Unipotentstabilizer}  We deal now with part (\ref{pa3}) of Proposition~\ref{propaffine}. We  suppose that~$B$ vanishes at~$0$, but that~$A$ does not. We may linearize~$B$ and, since it must vanish along the common one-dimensional orbit of~$A$ and~$B$, it must have one vanishing eigenvalue. Supposing that $B$ vanishes along~$x=0$, either
$B=x\indel{x}$ or~$B=x\indel y$. If~$A=xf\indel{x}+h\indel{y}$, the bracket relations imply, in the first case, $xf_x=-1$, and thus~$f(x)=-\log x+c$ and~$A$ cannot be analytic. In the second one, we have the relations~$f_y=0$ and~$h_y=f-1$ and thus~$f=f(x)$ and~$h=(f-1)y+g(x)$ with~$f\not\equiv 0$, $g(0)\neq 0$ (in particular, $f$ is analytic at the origin). Hence,
\begin{equation}\label{abcasfinal}A=xf(x)\del{x}+(y[f(x)-1]+g(x))\del{y},\; B=x\del y.\end{equation}
They are linearly independent in the complement of~$x=0$.
The vector fields that commute with these two in~$\{x>0\}$ are linear combinations of
$$X=-\left(xf(x)\del{x}+\left[yf(x)+u(x)\right]\del{y}\right),\;Y=\frac{1}{v(x)}\del{y},$$
where~$u$ and~$v$ are non-zero solutions to the differential equations
\begin{equation}\label{diffeqnsuv}xu'=xg'-g+\left(\frac{f-1}{f}\right)u,\;xv'=-\left(\frac{f-1}{f}\right)v,\end{equation}
defined in~$\{x>0\}$. By solving for the Christoffel symbols and using the above relations,
\begin{multline*}
\Gamma_{11}^1=-\frac{\alpha}{xf}-\frac{1}{x}-\frac{f'}{f}-\frac{2\gamma v}{xf}\left(yf+u\right)-\frac{\epsilon v^2}{xf}\left(yf+u\right)^2, \\
\Gamma_{11}^2=\frac{[2\delta-\alpha-2+f]}{x^2f^2}(yf+u)+\frac{\beta}{x^2f^2v}+\frac{[\phi-2\gamma]v}{x^2f^2}(yf+u)^2-\frac{\epsilon  v^2}{x^2f^2}(yf+u)^3-\frac{yf'+u'}{xf},\\
\Gamma_{12}^1=\gamma v+\epsilon v^2(yf+u),\\
\Gamma_{12}^2=\frac{\epsilon v^2}{xf}(yf+u)^2+\frac{[\gamma-\phi]v}{xf}(yf+u)-\frac{\delta}{xf}+\frac{1}{xf}-\frac{1}{x} ,  \\
\Gamma_{22}^1=-\epsilon xfv^2,\;
\Gamma_{22}^2=\phi v-\epsilon v^2(yf+u).
\end{multline*}

We will now prove that $f(0) \neq 0$. If~$f(0)=0$ then, from equation~(\ref{diffeqnsuv}), $v$ has an essential singularity at the origin, for its logarithmic derivative will have more than simple poles. Since~$\Gamma_{11}^1$ is analytic  at  the origin and  the coefficient of~$y^2$ in its expression contains $v^2$, this coefficient must vanish and thus $\epsilon =0$. Also  the coefficient of~$y$ in $\Gamma_{11}^1$ must vanish, and thus  $\gamma=0$. We must have~$\alpha=0$ for otherwise~$\Gamma_{11}^1$ would have pole of order greater than one at the origin. Finally, $\Gamma_{11}^1=-1/x-f'/f$ which has a pole at the origin since~$f$ is analytic: a contradiction.

Hence, if the connection extends analytically to the origin, $f(0)\neq 0$. We will now find new coordinates where the expressions of~$X$ and~$Y$ simplify.

Choose a coordinate~$z=x\xi(x)$ in which  the vector field~$xf(x)\partial/\partial x$ reads~$n^{-1}z\partial/\partial z$ for some~$n\in\mathbf{R}^*$. In the coordinates~$(x\xi(x),y\xi(x))$, which preserve the form of $X$ and $Y$, we have~$f(x)\equiv n^{-1}$. Hence, from equation~(\ref{diffeqnsuv}), $v(x)=-x^{n-1}$. We thus have
$$X=-\left( \frac{1}{n} x\del{x}+\left[ \frac{1}{n} y+u(x)\right]\del{y}\right),\;Y=-x^{1-n} \del{y}.$$
We will simplify even more the normal form of $X$ and $Y$. For this, we will first prove that $n\in\frac{1}{2}\mathbf{Z}$, $n\geq 2$.

Assume, by contradiction, that $v^2/x^2$ is \emph{not} analytic. From the coefficient of~$y^3$ in~$\Gamma_{11}^2$, $\epsilon=0$; from the coefficient of~$y$ in~$\Gamma_{12}^2$  and in~$\Gamma_{11}^1$, $\gamma=0$ and~$\phi=0$; from the analyticity of~$\Gamma_{11}^1$ and~$\Gamma_{12}^2$, $\alpha=-1/n$, $\delta=1-1/n$. These conditions imply that all the Christoffel symbols vanish except (possibly) for
$\Gamma_{11}^2$ which will be a function of~$x$. Thus  the connection $\nabla$  is flat: a contradiction. We must conclude that~$v^2/x^2$ is analytic and thus that~$n\in\frac{1}{2}\mathbf{Z}$, $n\geq 2$.

We apply  Lemma~\ref{briotbouquet} and consider $\psi(x)$  a solution to the differential equation
$x\psi'=[1-n]\psi-n[g(x)-g(0)]$ that
vanishes at the origin (the equation has a solution, for~$n>1 $ implies $1-n< 0$).

Now we replace  replace~$y$ by~$y+\psi(x)$. The vector field~$B$ in~(\ref{abcasfinal}) is preserved ; the vector field~$A$ changes to one where~$g$ is constant. Up to multiplying~$y$ by a constant,  we will suppose that~$g\equiv 1/n-1$. The differential equation~(\ref{diffeqnsuv}) for~$u$ becomes $xu'=\left(1-1/n\right)(1-n u)$. The general solution of this equation is~$u(x)=1/n+cx^{1-n}$. Hence, up to the addition of a multiple of~$Y$ to~$X$, we can suppose that~$u\equiv 1/n$. We thus have
$$A=   \frac{1}{n} \left( x\del{x}-(n-1)(y+1)\del{y} \right),\; B=x\del y$$
$$X=-   \frac{1}{n} \left(  x\del{x}+ (y+1)\del{y}\right),\;Y=-x^{1-n} \del{y}.$$
Once again, $\nabla$ has the expression~(\ref{affinvcon}) with respect to $(X,Y)$ and using the last expression of $X$ and $Y$, we find the Christoffel coefficients:
\begin{multline*}
\Gamma_{11}^1=-(n\alpha+1)x^{-1}+ 2\gamma x^{n-2} \left(y +1\right)-{\textstyle{\frac{1}{n}}}\epsilon x^{2n-3} \left(y +1\right)^2, \\
\Gamma_{11}^2=-n^2 \beta x^{-n-1}- [2(n-n\delta-1)+(n\alpha+1)] x^{-2}(y +1)-[\phi-2\gamma] x^{n-3} (y +1)^2-{\textstyle{\frac{1}{n}}}\epsilon  x^{2n-4}(y +1)^3,\\
\Gamma_{12}^1=-\gamma x^{n-1}+{\textstyle{\frac{1}{n}}}\epsilon x^{2n-2}(y+1),\\
\Gamma_{12}^2=(n-n\delta-1)x^{-1}-[\gamma-\phi] x^{n-2} (y +1)+{\textstyle{\frac{1}{n}}}\epsilon  x^{2n-3}(y +1)^2  \\
\Gamma_{22}^1=-{\textstyle{\frac{1}{n}}}\epsilon x^{2n-1},\;
\Gamma_{22}^2=-\phi x^{n-1}-{\textstyle{\frac{1}{n}}}\epsilon x^{2n-2}(y + 1).
\end{multline*}
For~$\Gamma_{11}^2$ and~$\Gamma_{11}^1$ to be analytic at the origin, $\beta=0$ and $\alpha=-1/n$. From the coefficient of~$x^{-2}$ in~$\Gamma_{11}^2$, $\delta=1-1/n$. If~$\epsilon$, $\gamma$ and~$\phi$ vanish simultaneously, the connection is flat. Leaving this case aside, the connection extends analytically to the origin in the following cases (recall that we have already established $n\in\frac{1}{2}\mathbf{Z}$, $n\geq 2$)
\begin{itemize}
\item $n=2$, $\phi=2\gamma$.
\item $n\in\mathbf{Z}$, $n\geq 3$
\item $n=m/2$, $m \in\mathbf{Z}$, $m\geq 5$ odd, $\gamma =0$, $\phi=0$.
\end{itemize}
The invariant connection in~$\mathrm{Aff}_0(\mathbf{R})$ associated to the restriction of this connection in~$\{x>0\}$ is of Type~II. From Proposition~\ref{typei-ii}, $\mathfrak{K}(\nabla)$ has dimension two (and  is generated by~$A$ and~$B$) except in the first case, where it has dimension three (in this last case~$(y+1)\indel{x}\in\mathfrak{K}(\nabla)$ and the connection is locally homogeneous). This eliminates the case $n=2$. These vector fields and connections are the germs at $(0,1)$ of those we found in the proof of part (\ref{pa1}) in Proposition~\ref{propaffine} (section~\ref{fixed points}). The connections are thus of Type~$\mathrm{II}^1$ .

\subsection{Local isometries and normal forms} So far we have proved that any germ of connection satisfying the hypothesis of Theorem~\ref{thm-local}, such that one of the two-dimensional algebras guaranteed by Proposition~\ref{dim2} is affine, is indeed the germ of a connection of Type~$\mathrm{I}$, $\mathrm{II}$ or~$\mathrm{III}$ in the neighborhood of some point~$O\in\mathbf{R}^2$ (the point~$(0,1)$ for the connections of Type~$\mathrm{II}^1$, the origin for the other cases). In order to get normal forms we must investigate the parameters that yield isometric connections.\\

The isomorphism type of the Killing Lie algebra of a connection is an isometry invariant. Our discussion splits naturally in two cases:

\subsubsection{Types~$\mathrm{I}$, $\mathrm{II}^0$ and $\mathrm{II}^1$ } By Proposition~\ref{propaffine}, the isometry class of a connection~$\nabla$ cannot belong simultaneously to two of the Types~$\mathrm{I}$, $\mathrm{II}^0$, $\mathrm{II}^1$. In all these cases the rank of~$\mathfrak{K}(\nabla)$ is one along~$\{x=0\}$ and two on its complement. In restriction to each connected component of the complement, $\nabla$ is locally modeled by a left-invariant connection in the affine group. For the convenience of the reader we reproduce, in Table~\ref{table1}, the vector fields associated to the normal forms of Theorem~\ref{thm-local} (recall that for the connections of Type $\mathrm{II}^1$, one should consider germs at $(0,1)$).\\

\begin{table}
\begin{tabular}{|c|c|c|c|c|}
\hline Type & $A$ & $B$ & $X$ & $Y$ \\ \hline
$\mathrm{I}(n)$ &  ${\textstyle \frac{1}{n}} x\del{x}-y\del{y}$ & $\del{y}$ &
$-{\textstyle \frac{1}{n}}x\del{x}$ & $-x^{-n}\del{y}$  \\ \hline
$\mathrm{II}(n)$ & ${\textstyle \frac{1}{n}}\left( x\del{x}+[1-n]y\del{y}\right)$ & $x\del{y}$ &
 $-{\textstyle \frac{1}{n}}\left(x\del{x}+y\del{y}\right)$ & $-x^{1-n}\del{y}$ \\ \hline
\end{tabular}
\caption{Killing vector fields ($A$ and~$B$) and their centralizers ($X$ and~$Y$) in~$\{x>0\}$  for the connections of Theorem~\ref{thm-local} whose Killing algebra is isomorphic to~$\mathfrak{aff}(\mathbf{R})$ ($n\in\frac{1}{2}\mathbf{Z}$, $n\geq \frac{1}{2}$ for Type~$\mathrm{I}$ and $n\geq \frac{5}{2}$ for Type~$\mathrm{II}$).}\label{table1}
\end{table}

We will begin by proving that, within the connections of Type~$\mathrm{I}$, the value of~$n$ is an isometry invariant and that the same is true for the connections of Types~$\mathrm{II}^0$ and $\mathrm{II}^1$. 

For Type~$\mathrm{I}(n)$, $A$ generates the space of Killing vector field vanishing at~$0$. It is normalized by~$[A,B]=B$. Its eigenvalue at the origin in the direction transverse to~$\{x=0\}$ is~$1/n$.

For Type~$\mathrm{II}^0(n)$, the semisimple vector fields are those of the form~$A+\lambda B$. All of them vanish at the origin and the eigenvalue in the direction transverse to~$\{x=0\}$ is~$1/n-1$.

For Type~$\mathrm{II}^1(n)$, the vector field~$X$ is the only analytic vector field in the centralizer of~$\mathfrak{K}(\nabla)$ such that, on either side of~$\{x=0\}$, there exists a vector field~$Y$ on the centralizer of~$\mathfrak{K}(\nabla)$ such that~$[X,Y]=Y$. The most general semisimple vector field is~$A+\lambda B$ and has, in the neighborhood of~$(0,1)$, the  primitive first integral~$h=x(y-\lambda x)^{1/(n-1)}$. We have~$X\cdot h=(1-n)^{-1}h$. The eigenvalue~$(1-n)^{-1}$ is intrinsically attached to~$\nabla$.\\

If a diffeomorphism maps a connection~$\nabla$ of Type~$\mathrm{I}(n)$ to a connection~$\nabla'$ of the same Type, it must map~$\mathfrak{K}(\nabla)$ to~$\mathfrak{K}(\nabla')$ and thus it must actually preserve~$\mathfrak{K}(\nabla)$, which is independent of the parameters~$(\gamma,\epsilon,\delta)$. The same is true for connections of Types~$\mathrm{II}^0(n)$ and $\mathrm{II}^1(n)$. We will study, in all cases,  the group of germs of diffeomorphisms fixing~$O$ and preserving~$\mathfrak{K}(\nabla)$. \\

Any germ of diffeomorphism~$g:(\mathbf{R}^2,O)\to(\mathbf{R}^2,O)$ that preserves~$\mathfrak{K}(\nabla)$ should preserve the line~$\{x=0\}$, where the rank of this algebra is one. It may do so either by preserving or by exchanging the connected components of its complement. It may  preserve or reverse the orientation. Hence, within the group of germs of automorphisms of~$\mathfrak{K}(\nabla)$, the subgroup of those preserving the orientation and the half-plane~$\{x=0\}$ is a normal subgroup of index one, two or four.\\

If~$g$ preserves the half-plane~$\{x>0\}$, then it must preserve the centralizer of~$\mathfrak{K}(\nabla)$ within the half-plane, the Lie algebra generated by~$X$ and~$Y$. Since, within this algebra, $X$ is the only element that extends analytically to the origin and  is normalized by~$[X,Y]=Y$, we must have~$g_*X=X$.  From~(\ref{actionadjconn}), $g$ has the effect of multiplying~$Y$ by some constant, positive, if~$g$ preserves orientation, negative, if it does not. 

We claim that, in the case when~$g$ preserves orientation, the positive constant is arbitrary. For the connections of Types~$\mathrm{I}$ and~$\mathrm{II}^0$, the flow of the vector field~$X$ fixes~$O$, preserves~$\mathfrak{K}(\nabla)$, preserves~$X$ and multiplies~$Y$ by an arbitrary positive constant. 
For the connections of Type~$\mathrm{II}^1$, the vector field~$A+(1-n)X$ vanishes at~$O$, its flow preserves~$\mathfrak{K}(\nabla)$ and~$X$ and multiplies~$Y$ by an arbitrary positive constant. Hence,  the action upon the parameters of the connection of these orientation-preserving mappings is
\begin{equation}\label{actionparameteres} (\gamma,\epsilon,\phi)\mapsto (\mu \gamma, \mu^2\epsilon,\mu\phi) \end{equation}
for~$\mu>0$.

Let us now consider the case where~$g$  preserves the half-plane~$\{x>0\}$,  but not the orientation. We claim that this cannot happen  for the family of connections of Type~$\mathrm{II}^1$ and that the above negative constant is arbitrary for the connections of Types~$\mathrm{I}$ and~$\mathrm{II}^0$.  Let
\begin{equation}\label{miroirtau}\rho(x,y)=(x,-y).\end{equation}
For the connections of Types~$\mathrm{I}$ and~$\mathrm{II}^0$, $\rho(O)=O$, $\rho$ preserves the half-plane~$\{x>0\}$ and satisfies~$\rho_*X=X$, $\rho_*Y=-Y$. In these cases, the action upon the parameters of the connection is~(\ref{actionparameteres}) for~$\mu=-1$. This, combined with the previous results, shows that the negative constant is arbitrary.  If~$\nabla$ is a connection of Type~$\mathrm{II}^1$ and if~$g:(\mathbf{R}^2,O)\to(\mathbf{R}^2,O)$ preserves $\mathfrak{K}(\nabla)$ and the half-plane~$\{x>0\}$, but does not preserve the orientation, by the previous arguments, we may suppose that~$g_*X=X$ and~$g_*Y=-Y$. The most general mapping doing this is given by
$$(x,y)\mapsto (e^t x,e^{(1-n)t}(2x-y)+s e^t x)$$
with~$s,t\in\mathbf{R}$. But if such a map fixes~$(0,1)$ then~$-e^{(1-n)t}=1$, which is impossible.\\

Let us now consider the case where~$g$ does not preserve neither the half-plane~$\{x>0\}$, nor the orientation. Let
\begin{equation}\label{miroirsig}
\sigma(x,y)=(-x,y).
\end{equation}
The vector field~$X$ is well-defined in a neighborhood of~$O$ and we have~$\sigma_*(X)=X$. 

If~$n\in\mathbf{Z}$, there is a natural extension of~$Y$ onto~$\{x<0\}$, that we will still call~$Y$. We have~$\sigma_*(X)=X$, $\sigma_*Y=(-1)^nY$ (for Type~$\mathrm{I}$) and~$\sigma_*Y=(-1)^{n+1}Y$ (for Types~$\mathrm{II}^0$ and~$\mathrm{II}^1$). In particular, $\sigma$ is an isometry for the connections of Type~$\mathrm{I}(n)$, when~$n$ is even, and for those of Type~$\mathrm{II}(n)$, if~$n$ is odd. For the opposite parities, the action upon the parameters of the connection is~(\ref{actionparameteres}) for~$\mu=-1$.

If~$n\notin\mathbf{Z}$, the vector field~$\sigma_*Y$ is well-defined in~$\{x<0\}$ and satisfies~$[X,\sigma_*Y]=\sigma_*Y$. We have~$\nabla_X X=\alpha X $, $\nabla_X (\sigma_*Y)=\delta (\sigma_*Y)$ and $\nabla_{\sigma_*Y} (\sigma_*Y)=-\epsilon X$. The transformation has the effect of changing the sign of~$\epsilon$ (in some sense, $\sigma$ acts upon the coefficients as~(\ref{actionparameteres}) for~$\mu=\sqrt{-1}$).\\

The last case, where~$g$ preserves the orientation, but not~$\{x>0\}$, is a composition of the two orientation-reversing cases (it takes place if and only if both of them take place).\\

The results for~$n\in\mathbf{Z}$ are summarized in Table~\ref{tab:isomnor}.

\begin{table}
\begin{center}
\begin{tabular}{|c|cc|ccc|} \hline
& \begin{tabular}{c}
preserves \\ $\{x>0\}$?
\end{tabular}
& \begin{tabular}{c}
preserves \\ orientation?
\end{tabular} & $\mathrm{I}(n)$ & $\mathrm{II}^0(n)$ & $\mathrm{II}^1(n)$ \\
\hline $1$ & yes & yes & $1$ & $1$ & $1$ \\
$\sigma$ & no & no &  $(-1)^n$ & $(-1)^{n+1}$ & $(-1)^{n+1}$\\
$\rho$ & yes & no & $-1$ & $-1$ & $\times$ \\
$\sigma\circ\rho$ & no & yes & $(-1)^{n+1}$ & $(-1)^{n}$ & $\times$ \\ \hline
\end{tabular}
\end{center}
\caption{Action upon the family of connections of some transformations preserving~$\mathfrak{K}(\nabla)$, in the case $n\in\mathbf{Z}$.}\label{tab:isomnor}
\end{table}

\begin{remark}\label{isomesp} We proved that~$\sigma$ is an isometry for the connections of Type~$\mathrm{I}(n)$, if~$n$ is even, and for the connections of Type~$\mathrm{II}(n)$, if~$n$ is odd. \end{remark}

\begin{remark}\label{twoside} For all the connections of Theorem~\ref{thm-local} there is a germ of isometry fixing~$O$ that does not preserve the half-plane~$\{x>0\}$.
 \end{remark}

\begin{remark} If~$n\notin\mathbf{Z}$, for the connections of Type~$\mathrm{I}$ and~$\mathrm{II}$ (seen as global connections in~$\mathbf{R}^2$), the left-invariant connection in the affine group induced by the restriction of~$\nabla$ to~$\{x>0\}$ is \emph{not} equivalent to the connection induced by the restriction to~$\{x<0\}$ (the sign of~$\epsilon$, which is an invariant, is not the same). These give examples of connections in~$\mathbf{R}^2$ (with polynomial Cristoffel symbols) where the isometry pseudogroup acts with an open orbit but not with a dense one. \end{remark}

\subsubsection{Type~$\mathrm{III}$}

The local diffeomorphisms~$\Phi$ preserving~$\mathfrak{K}(\nabla)$ are necessarily linear, for they must preserve the space of divergence-free linear vector fields on~$(\mathbf{R}^2,0)$.  They are generated by~$\mathfrak{K}(\nabla)$, by the homotheties (the flow of~$X$) and by~$\sigma$. Al these preserve~$X$ and the last two (which do not, a priori, preserve~$\nabla$) preserve~$Y$. The effect of these diffeomorphisms is to multiply~$Y$ by an arbitrary constant and we thus obtain the normal form for the connections of  Type~$\mathrm{III}$.

\section{The global models}\label{sec:const}

We will begin by proving parts~(\ref{main-1st}) and~(\ref{main-mod}) of Theorem~\ref{thm-global}, by constructing the connections~$\nabla_{n,m}$ in~$\mathbf{R}^2$ announced in the Theorem and by studying their isometries.

\subsection{The connection} The idea behind the construction of~$\nabla_{n,m}$ may be sketched as follows:
\begin{enumerate}
\item to view~$\mathbf{R}^2$ as the union of the sets $V_i=\{(x,y)|x\in (i,i+1) \}$, $\ell_i=\{(x,y)|x=i\}$, $i\in\mathbf{Z}$.
\item to consider the action of~$\widetilde{\sigma}(x,y)= (-x,y)$ and~$\widetilde{\phi}(x,y)=(x+2,y)$ on~$\mathbf{R}^2$
\item to construct a connection~$\nabla$ in a neighborhood of the closure of~$V_0$ whose germ at~$\ell_0$ is preserved by~$\widetilde{\sigma}$ and whose germ at~$\ell_1$ is preserved by~$\widetilde{\sigma}\circ\widetilde{\phi}^{-1}$.
\item to propagate the connection to~$\mathbf{R}^2$ by the group generated by~$\widetilde{\sigma}$ and~$\widetilde{\phi}$. 
\end{enumerate}

Within the left-invariant connections in~$\mathrm{Aff}_0(\mathbf{R})$, we will say that a marked connection  is of Type~$\Xi(n)$, if it is of Type~$\mathrm{I}_0(n)$ and~$n$ is even, or if it is of Type~$\mathrm{II}^1_0(n)$ and~$n$ is odd.

Let~$(n,m)\in \mathbf{Z}^2$, $n,m\geq 2$. Let~$\nabla_0$ be a left-invariant connection in~$\mathrm{Aff}_0(\mathbf{R})$ such that~$X_0$ and~$X_0-Y_0$ are markings of~$\nabla$, such that~$(\nabla_0,X_0-Y_0)$ is of Type~$\Xi(m)$ and that~$(\nabla_0,X_0)$ is of Type~$\Xi(n)$. Such a connection \emph{exists and is unique} by formula~(\ref{iftwomarkings}). Let~$(\gamma,\phi, \epsilon)=(\gamma_0,\phi_0, \epsilon_0)$ be the corresponding parameters.

\begin{lemma}\label{nonvanishingparams} The parameters~$\gamma_0$ and~$\phi_0$ do not vanish simultaneously.\end{lemma}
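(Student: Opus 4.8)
The plan is to reduce the statement to an explicit computation of $\gamma_0$ and $\phi_0$ in terms of the two markings, using the relations recorded in Remark~\ref{deuxtypes}, and then to check the non-vanishing by a short case analysis on the parities of $n$ and $m$. First I would observe that, since $X_0$ is a marking of~$\nabla_0$, the parameter~$\beta_0$ vanishes; hence, in the notation of Remark~\ref{deuxtypes}, one has $\alpha=\alpha(\nabla_0,X_0)$ and $\delta=\delta(\nabla_0,X_0)$, while the second marking~$X_0-Y_0$ is exactly the case $\lambda=-1$, with invariants $\alpha'=\alpha(\nabla_0,X_0-Y_0)$ and $\delta'=\delta(\nabla_0,X_0-Y_0)$. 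Setting $\lambda=-1$ in formula~(\ref{iftwomarkings}) then yields
\begin{equation*}
\gamma_0 = 1+\alpha-\delta-\delta',\qquad \phi_0 = 2\delta-1-\alpha'.
\end{equation*}

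Next I would translate the Type~$\Xi$ hypotheses into values of these invariants. By Definition~\ref{defnmarking}, $\alpha=-1/n$ and $\alpha'=-1/m$, whereas the value of~$\delta$ is dictated by the parity of~$n$: $\delta=1$ when $n$ is even (Type~$\mathrm{I}_0(n)$) and $\delta=1-1/n$ when $n$ is odd (Type~$\mathrm{II}_0(n)$), and likewise for~$\delta'$ in terms of~$m$. The key point is that it already suffices to control $\phi_0=2\delta-1+1/m$: when $n$ is even this equals $1+1/m$, and when $n$ is odd (so $n\geq 3$) it equals $1-2/n+1/m\geq 1/3+1/m$. In both situations $\phi_0>0$, which proves the lemma.

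I do not expect a genuine obstacle here; the whole difficulty is bookkeeping. The one place that calls for care is correctly reading off $(\alpha,\delta)$ and $(\alpha',\delta')$ from the Type~$\Xi(n)$ and Type~$\Xi(m)$ data---in particular, remembering that the parity of~$n$ (resp.~$m$) decides between Type~$\mathrm{I}_0$ and Type~$\mathrm{II}_0$---and correctly identifying $\lambda=-1$ as the parameter attached to the marking~$X_0-Y_0$. As a sanity check I would also evaluate $\gamma_0=1+\alpha-\delta-\delta'$, whose analogous case analysis gives $\gamma_0<0$ in every parity combination; thus in fact neither parameter vanishes individually, which is stronger than the asserted non-simultaneous vanishing.
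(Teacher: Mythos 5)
Your proof is correct and follows essentially the same route as the paper: both rest on formula~(\ref{iftwomarkings}) with $\lambda=-1$ together with the values of $\alpha$, $\delta$, $\alpha'$, $\delta'$ dictated by the Type~$\Xi$ hypotheses, the paper arguing by contradiction while you compute $\phi_0$ directly. Your version even yields the slightly stronger conclusion that $\phi_0>0$ and $\gamma_0<0$ individually.
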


\begin{proof}If~$\gamma_0$ and~$\phi_0$ vanished simultaneously, we would have, from~(\ref{deuxtypes}), $\delta+\delta'-1-\alpha=0$ and~$1-2\delta+\alpha'=0$. If~$(\nabla,X_0)$ were of Type~$\mathrm{I}$, we would have~$\delta=1$ which would then imply~$\alpha'=1$, an impossibility. If~$(\nabla,X_0)$ were of Type~$\mathrm{II}$, we would have~$\delta=1+\alpha$ and this would imply~$\delta'=0$, forcing~$(\nabla,X_0+\lambda Y_0)$ to be of type~$\mathrm{II}$, this is, $\delta'=1+\alpha'$ and thus $\alpha'=-1$. But this contradicts the fact that~$m\geq 2$. \end{proof}

Let~$V_0=\{(u,v)\in\mathrm{Aff}_0(\mathbf{R}), 0< v/u<1 \}$. It is invariant by~$A_0$ and is bounded by the common orbits of $A_0$ and the two special markings of~$\nabla_0$ just defined. Equivalently, $V_0$ is the set where  
\begin{equation}\label{orient}\frac{A_0\wedge(X_0-Y_0)}{A_0\wedge X_0}=1-\frac{u}{v}\end{equation}
is negative. Let~$\beta:\mathrm{Aff}_0(\mathbf{R})\to \mathrm{Aff}_0(\mathbf{R})$ be the involutive mapping given by
\begin{equation}\label{voltear}\beta(u,v)=(u,u-v).\end{equation}
The image of~$\nabla_0$ under~$\beta$ is the unique left-invariant connection~$\nabla_0'$ in~$\mathrm{Aff}_0(\mathbf{R})$ such that~$X_0$ and~$X_0-Y_0$ are markings of~$\nabla_0'$, such that~$(\nabla_0',X_0-Y_0)$ is of Type~$\Xi(n)$ and such that~$(\nabla_0',X_0)$ is of Type~$\Xi(m)$. Moreover, $\beta$ preserves~$V_0$ and the vector field~$A_0$. In particular, if~$n=m$, $\beta:V_0\to V_0$ is an orientation-reversing isometry of~$\nabla_0$. Within the affine group, $\beta$ is the inner automorphism of~$(-1,0)$ followed by the right translation by~$(1,1)$.

We will add a boundary component to~$V_0$ and extend~$\nabla_0$ analytically  to it:
\begin{description}
\item[Case~1, $n$ is odd] Let~$\nabla$ be a connection of type~$\mathrm{II}^1(n)$ in~$\mathbf{R}^2$ with the parameters~$(\gamma,\phi,\epsilon)=(\gamma_0,\phi_0, \epsilon_0)$. Consider the embedding~$\Psi:V_0\to \mathbf{R}^2$ given by
$$\Psi(u,v)=\left(\frac{1}{\sqrt[n]{u}},\frac{v}{\sqrt[n]{u}}\right)=(x,y).$$
It maps~$X_0$ to $X$, $Y_0$ to~$Y$ and~$A_0$ to~$A$. In consequence, it maps~$\nabla_0$ to~$\nabla$. The first integral~$v/u$ of~$A_0$ is mapped to the first integral~$yx^{n-1}$ of~$A$ and thus~$\Psi(V_0)=\{(x,y)|\, 0<x, 0<yx^{n-1}<1\}$. Let~$\ell=\{(x,y)\in \mathbf{R}^2, x=0, y>0\}$.

\item[Case 2, $n$ is even] Let~$\nabla$ be a connection of type~$\mathrm{I}(n)$ with parameters~$(\gamma,\phi,\epsilon)=(\gamma_0,\phi_0, \epsilon_0)$. Consider the embedding~$\Psi :V_0\to \mathbf{R}^2$ given by
$$\Psi(u,v)=\left(\frac{1}{\sqrt[n]{u}},v-1\right)=(x,y).$$
The vector fields $X_0$ and $Y_0$ are mapped, respectively, to the vector fields~$X$ and $Y$ and thus, $\Psi$ maps~$\nabla_0$ to~$\nabla$. The vector field~$A_0$ is mapped to~$A-B$. The first integral~$v/u$ of~$A_0$ is mapped  to the first integral~$(1+y)x^{n}$ of~$A-B$ and thus~$\Psi(V_0)=\{(x,y)|\, x>0, 1>(1+y)x^n>0\}$. Let~$\ell=\{(x,y)|\, x=0, y>-1\}$.\end{description}

The set~$\Psi(V_0)\cup\ell$ is naturally a manifold-with-boundary and gives an embedding of~$V_0$ into a manifold-with-boundary~$V_0\cup\partial^+V_0$ to which~$\nabla_0$ extends. By exchanging the roles of~$n$ and~$m$, via~$\beta$, the above construction may be repeated in order to construct another boundary component~$\partial^-V_0$ of~$V_0$ and to extend analytically the connection to it.

We thus have a manifold-with-boundary~$\overline{V}_0$ whose interior is~$V_0$ and whose boundary components are~$\partial^+V_0$ and~$\partial^-V_0$. There is an analytic connection~$\nabla_0$ on~$\overline{V}_0$.

For~$j\in \mathbf{Z}$, let~$\overline{V}_j=\overline{V}_0$ together with a tautological mapping~$\pi_j:\overline{V}_0\to \overline{V}_j$. Let~$\nabla_j$ be the connection in~$\overline{V}_j$ such that~$\pi_j$ is an isometry. We will now glue~$\overline{V}_0$ and~$\overline{V}_1$ along~$\partial^+V_0$ and~$\partial^+V_1$ while gluing~$\nabla_0$ to~$\nabla_1$. Consider the immersions~$\Psi:(V_0\cup\partial^{+}V_0)\to\mathbf{R}^2$ and~$\sigma\circ\Psi\circ\pi_1^{-1}:(V_1\cup\partial^+V_1)\to\mathbf{R}^2$, for the mapping~$\sigma$ defined in~(\ref{miroirsig}). By identifying the points that have the same image, we obtain an identification between~$\partial^{+}V_0$ and~$\partial^+V_1$ that produces a submanifold~$\ell_1$ of the gluing. For the connections of Type~$\Xi(n)$, $\sigma$ is an isometry of~$\nabla$ (Remark~\ref{isomesp}) and the connections~$\nabla_0$ and~$\nabla_1$ agree after the gluing.  We can proceed in exactly the same way to identify~$\partial^+V_{2k}$ with~$\partial^+V_{2k+1}$.  By exchanging the roles of~$n$ and~$m$, via $\beta$,  we can now glue~$\partial^-V_{2k}$ with~$\partial^-V_{2k-1}$.

In this way we obtain a manifold~$M$, diffeomorphic to~$\mathbf{R}^2$, endowed with a connection~$\nabla_{n,m}$. This is the connection of item~(\ref{main-1st}) of Theorem~\ref{thm-global}.

\subsection{The Isometries}
We will  now study the global isometries of the connection~$\nabla_{m,n}$ just defined.

The vector field~$A$ is globally well-defined in~$M$ for~$\sigma_* A =A$. It is complete, since it preserves~$V_i$ and~$\ell_i$, for every~$i$, and is complete in restriction each of them. The flow of~$A$ gives thus a action of~$\mathbf{R}$ by isometries.

There is a unique orientation-preserving (tautological) isometry~$\phi:M\to M$ such that~$\phi(V_j)=V_{j+2}$ and such that~$\phi\circ\pi_{-2}:V_0\to V_0$ is the identity. This isometry gives an action of~$\mathbf{Z}$ on~$M$ by orientation-preserving isometries. The flow of~$A$ commutes with~$\phi$.

If~$n=m$ there is, moreover, a unique orientation-preserving (tautological)  isometry~$\phi^\frac{1}{2}:M\to M$ such that~$\phi^\frac{1}{2}(V_j)=V_{j+1}$,  such that~$\phi^\frac{1}{2}|_{V_0}\circ\pi_{-1}$ is given by the mapping~$\beta$ of formula~(\ref{voltear}). The flow of~$A$ commutes with~$\phi^\frac{1}{2}$. We naturally have~$\phi^\frac{1}{2}\circ\phi^\frac{1}{2}=\phi$.

\begin{proposition} The group of orientation-preserving isometries of~$\nabla_{m,n}$ is isomorphic to~$\mathbf{Z}\times\mathbf{R}$. If~$m\neq n$, it is generated by~$\phi$ and by the flow of~$A$.  If~$m=n$, it is generated by~$\phi^\frac{1}{2}$ and by the flow of~$A$. 
\end{proposition}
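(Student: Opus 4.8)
The plan is to prove that every $F\in\mathrm{Isom}^+(\nabla_{m,n})$ lies in the group $H$ generated by $\phi$ (resp.\ $\phi^{\frac{1}{2}}$) and the flow of $A$, the reverse inclusion being clear. First I would observe that, local homogeneity being an isometric invariant, $F$ preserves the closed set $L=\bigcup_i\ell_i$ on which $\mathfrak{K}(\nabla_{m,n})$ fails to be transitive. Since the $\ell_i$ are disjoint, properly embedded and linearly ordered (they cut $M$ into the ordered strips $V_i$), $F$ induces a bijection of the index set of the form $i\mapsto\epsilon i+c$ with $\epsilon\in\{\pm 1\}$. The transverse eigenvalue of the Killing field generating the isotropy along $\ell_i$—equal to $1/n$ for a curve modelled on Type~$\mathrm{I}(n)$ and to $(1-n)^{-1}$ for one modelled on Type~$\mathrm{II}^1(n)$—is an isometric invariant distinguishing the curves modelled on $\Xi(n)$ from those modelled on $\Xi(m)$. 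Hence, if $m\neq n$, $F$ respects the parity of the index and $c$ is even; composing $F$ with a power of $\phi$ (resp.\ of $\phi^{\frac{1}{2}}$ if $m=n$) we may assume the index map is $i\mapsto\epsilon i$, so that in particular $F$ fixes $\ell_0$ setwise.

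Next I would rule out $\epsilon=-1$. In that case $F$ would fix $\ell_0$ while exchanging its two sides, so near $\ell_0$ it would be a germ of self-isometry interchanging the half-planes $\{x>0\}$ and $\{x<0\}$. By the local analysis preceding Table~\ref{tab:isomnor}, such germs arise from $\sigma$ of~(\ref{miroirsig}) post-composed with side-preserving isometries, and the orientation-preserving ones among them reduce to $\sigma\circ\rho$, with $\rho$ as in~(\ref{miroirtau}). On a Type~$\mathrm{I}$ curve $\sigma\circ\rho$ acts on the parameters through~(\ref{actionparameteres}) with $\mu=-1$, and on a Type~$\mathrm{II}^1$ curve it does not occur at all; since $\gamma_0$ and $\phi_0$ do not vanish simultaneously (Lemma~\ref{nonvanishingparams}), $\sigma\circ\rho$ is not an isometry of $\nabla_{m,n}$. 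Thus $\epsilon=+1$ and $F$ preserves each strip $V_i$ together with its orientation.

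The heart of the argument—and the step I expect to be the main obstacle—is to prove that $F_*A=A$. I would write $\mathfrak{K}(\nabla_{m,n})=\langle A,B\rangle\cong\mathfrak{aff}(\mathbf{R})$ with $[A,B]=B$, where $B$ is the global Killing field obtained by extending a local one with $[A,B]=B$ through Nomizu's theorem on the simply connected $M$. As $F_*$ is an automorphism of $\mathfrak{aff}(\mathbf{R})$ and every such automorphism is inner, of the form~(\ref{inner}), one has $F_*A=A+\nu B$ and $F_*B=\mu B$. The key geometric input is that $A$ is nowhere zero on $M$: in each normal form its singular point lies on $\{x=0\}$ but outside the attached boundary curve $\ell$, i.e.\ at the ideal corner that is the common limit of the $\ell_i$. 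By contrast, for $\nu\neq 0$ the field $A+\nu B$ is again semisimple and its singular point moves along $\{x=0\}$ into the interior of some $\ell_i\subset M$—for one sign of $\nu$ on a curve of one family and, since the gluing $\beta$ of~(\ref{voltear}) sends $B$ to $-B$, for the other sign on a curve of the other family. As an isometry carries the zero set of $A$ onto that of $F_*A$, and $A$ has no zero while $A+\nu B$ has one whenever $\nu\neq 0$, we must have $\nu=0$, that is, $F_*A=A$.

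Finally, $F$ restricts to an orientation-preserving self-isometry of the left-invariant model carried by $V_0$ (whose Killing algebra is the algebra of right-invariant fields, as required, by Proposition~\ref{typei-ii}), so by Proposition~\ref{isomleft}(\ref{isomleft-ori}) it is a left translation; writing it as $\varphi^{A}_{t}\circ\varphi^{B}_{s}$ in terms of the flows of $A$ and $B$ and using $(\varphi^{A}_{t}\circ\varphi^{B}_{s})_*A=A+se^{-t}B$ together with $F_*A=A$ forces $s=0$. Hence $F|_{V_0}=\varphi^{A}_{t}$, and by analyticity $F=\varphi^{A}_{t}$ on all of $M$; undoing the earlier normalization gives $F\in H$. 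To conclude I would check that $H\cong\mathbf{Z}\times\mathbf{R}$: the flow of $A$ is a faithful $\mathbf{R}$-action, since its orbits—such as each $\ell_i$—are non-closed; $\phi$ (resp.\ $\phi^{\frac{1}{2}}$) generates an infinite cyclic group commuting with it; and the two subgroups meet trivially, because a nonzero power of $\phi$ permutes the strips while the flow of $A$ preserves each of them. This yields $\mathrm{Isom}^+(\nabla_{m,n})\cong\mathbf{Z}\times\mathbf{R}$ with the stated generators.
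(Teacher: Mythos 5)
Your overall strategy is different from the paper's (which first classifies the isometries of the single strip $(V_0,\nabla_0)$ and then assembles the global group), and several of your steps are sound: the reduction via the permutation of the curves $\ell_i$, the exclusion of side-exchanging germs via Table~\ref{tab:isomnor} and Lemma~\ref{nonvanishingparams}, and the endgame via Proposition~\ref{isomleft}. But the step you yourself single out as the heart of the argument --- proving $F_*A=A$ by comparing zero sets --- contains a genuine gap. On a boundary curve modelled on Type~$\mathrm{II}^1(n)$ the local generator of the derived algebra is $B=x\,\indel{y}$, which vanishes identically along $\{x=0\}$; consequently $A+\nu B=\frac{1}{n}x\indel{x}+\left(\frac{1-n}{n}y+\nu x\right)\indel{y}$ has exactly the same singular point as $A$, namely the ideal point $(0,0)$, which lies \emph{outside} $\ell=\{x=0,\ y>0\}$ and outside $M$, \emph{for every} $\nu$. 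The singular point does not ``move into the interior of some $\ell_i$'' as you claim: that happens only on Type~$\mathrm{I}$ curves, where $B=\indel{y}$ is transverse-free of zeros on $\{x=0\}$. Hence if both $n$ and $m$ are odd, $A+\nu B$ is nowhere vanishing on all of $M$ for every $\nu$ (it also has no zero in the strips $V_i$, where it corresponds to $A_0+\nu B_0=-u\indel{u}+(\nu-v)\indel{v}$ with $u>0$), and your criterion cannot force $\nu=0$. Even when exactly one of $n,m$ is even, the argument excludes only one sign of $\nu$.

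The paper closes this step differently, and you can substitute its argument directly: since $F$ permutes the strips $V_i$ and $A$ is complete in restriction to each of them, $F_*A$ must again be complete in restriction to each $V_i$; but under the identification of $V_i$ with the cone $V_0=\{0<v/u<1\}\subset\mathrm{Aff}_0(\mathbf{R})$, the field $A_0+\nu B_0$ with $\nu\neq 0$ does not preserve $V_0$ --- along its orbits $v/u$ is a nonconstant affine function of $e^{t}$, so they cross the boundary rays in finite time. This forces $\nu=0$ in all cases and the remainder of your proof then goes through.
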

\begin{proof} Let us begin by studying the group of global isometries of~$\nabla_0$ in~$V_0$ in order to prove that any orientation-preserving isometry of~$(V_0,\nabla_0)$ embeds into the flow of~$A_0$ and that~$V_0$ admits an orientation-reversing isometry if and only if~$n=m$.

Let~$f:V_0\to V_0$ be an isometry of~$\nabla_0$. From Proposition~\ref{isomleft} and the previously established fact that $(\gamma_0,\epsilon_0,\phi_0)\neq (0,0,0)$,  $\mathfrak{K}(\nabla_0)$ is the Lie algebra of right-invariant vector fields (this follows also from Proposition~\ref{typei-ii}). In consequence, $f$  should map the complete vector field $A_0$ to a complete vector field of the form~$A_0+\lambda B_0$. But such a vector field cannot preserve~$V_0$ (be complete in restriction to~$V_0$), unless~$\lambda=0$. We must conclude that~$f_* A_0 =A_0$. Since~$B_0$ generates the derived algebra of~$\mathfrak{K}(\nabla_0)$, $f$ must map~$B_0$ to a multiple of it, positive if~$f$ preserves orientation, negative, if it does not. 

If~$f$ preserves orientation, up to composing with the flow of~$A$, we may suppose that~$f_*B_0=B_0$. By the orientation-preserving hypothesis and Proposition~\ref{isomleft}, $f$ is a left translation. But the fact that it preserves the markings of~$\nabla_0$ implies that~$f$ must be the identity. This proves that the orientation-preserving isometries of~$(V_0,\nabla_0)$ embed into the flow of~$A$. If~$f$ does not preserve orientation, it must exchange the special markings and we have~$n=m$. The mapping~$\beta$ of formula~(\ref{voltear}) is such an isometry and, after composing with it, we get an orientation-preserving isometry that embeds into the flow of~$A$.

Let~$f:M\to M$ be an orientation-preserving isometry. Since~$\mathfrak{K}(\nabla)$ has rank two exactly in the union of the~$V_i$, we should have~$f(V_0)=V_k$ for some~$k\in\mathbf{Z}$. The mapping~$\pi_{-k}\circ f|_{V_0}:V_0\to V_0$ is an isometry, orientation preserving if~$k$ is even, orientation-reversing if~$k$ is odd. If~$k$ is even (in particular, if~$m\neq n$), up to composing~$f$ with a suitable time of the flow of~$A$ on~$M$, we may suppose that~$\pi_{-k}\circ f|_{V_0}$ is the identity: this implies that~$f=\phi^k$. If~$k$ is odd,  composition with~$\phi^\frac{1}{2}$ reduces the claim to the previous case. \end{proof}

Let $G$ be a discrete  group  acting  by orientation-preserving isometries on $M$,  such that the quotient is a compact surface~$S$. Since every isometry preserves the nowhere vanishing vector field~$A$, $S$ admits a nowhere vanishing vector field and is thus a torus. Thus, in order to understand the compact quotients of~$M$, we should understand the homomorphisms~$\Phi:\mathbf{Z}^2\to \mathrm{Isom}^+(M,\nabla_{n,m})$. We already established an isomorphism between~$\mathrm{Isom}^+(M,\nabla_{n,m})$ and~$\mathbf{Z}\times\mathbf{R}$.

We claim that we can choose an ordered set of generators~$(e_1,e_2)$ of~$\mathbf{Z}^2$ such that
$\Phi(e_1)=(0,\tau)$ and $\Phi(e_2)=(k,\theta)$, with~$\tau>0$ and~$k>0$. Let~$\Phi(e_i)=(n_i,\tau_i)\in\mathbf{Z}\times\mathbf{R}$. We cannot have~$n_1=0$ and~$n_2=0$. Suppose first  that~$n_1n_2\neq 0$. Let~$k>0$ be the greatest common divisor of~$n_1$ and~$n_2$, so that~$n_i=k p_i$, with $p_1, p_2\in\mathbf{Z}$. Let~$q_1,q_2\in\mathbf{Z}$, such that~$p_1q_1+p_2q_2=1$. In the basis~$(p_2e_1-p_1e_2$, $q_1e_1+q_2e_2)$ of $\mathbf{Z}^2$, the images of the generators read~$(0,\tau)$ and~$(k,\theta)$, which reduces the case~$n_1n_2\neq 0$ to the case~$n_1n_2=0$. Up to replacing the generators by their inverses, we have~$\tau>0$ and~$k>0$. In this setting, $\tau$, $k$ and the class of $\theta$ in~$\mathbf{R}/\tau\mathbf{Z}$ are intrinsically defined and depend only upon the representation.  This proves item~(\ref{main-mod}) of Theorem~\ref{thm-global}.

\section{Quasihomogeneous connections on compact surfaces}\label{Quasi-compact-surfaces}

Let~$S$ be a compact oriented analytic surface endowed with a quasihomogeneous analytic connection~$\nabla_s$. Let~$\Omega\subset S$ be the open set  where the rank of the  Killing algebra is at least two. Our main assumption is that~$\Omega$ is neither empty,  nor all of~$S$. At the points in the boundary of~$\Omega$, $\nabla_s$ is locally isomorphic to one of the quasihomogeneous germs of connection classified in Theorem~\ref{thm-local}. Thus, the Killing algebra of~$\nabla_s$ is  isomorphic  either to~$\mathfrak{aff}(\mathbf{R})$, or to~$\mathfrak{sl}(2,\mathbf{R})$. We shall analyze separately these two cases in order to completely classify the pairs $(S, \nabla_s)$. In particular, we will prove, in Section~\ref{sl2}, that the case~$\mathfrak{sl}(2,\mathbf{R})$ does not occur.  Section~\ref{globalaff} achieves the proof of Theorem~\ref{thm-global}, in the case where the Killing algebra is isomorphic to~$\mathfrak{aff}(\mathbf{R})$.

\subsection{When the killing algebra is isomorphic to~$\mathfrak{aff}(\mathbf{R})$}\label{globalaff}

We will now prove part~(\ref{main-rec}) of Theorem~\ref{thm-global}. We will prove that, under the hypothesis that the killing algebra is isomorphic to~$\mathfrak{aff}(\mathbf{R})$, that $(S,\nabla_s)$ is isometric to a compact quotient of one of the affine manifolds~$(M,\nabla_{n,m})$ defined in the previous section.\\

 By Theorem~\ref{thm-local}, at the  points in the boundary of~$\Omega$, $\nabla_s$  is of Type~$\mathrm{I}$, $\mathrm{II}^0$ or $\mathrm{II}^1$.\\

Our first claim is that~\emph{$S$ is a torus and that each connected component of~$S\setminus\Omega$ is a cylinder}. We will begin by defining a non-singular foliation~$\mathcal{F}$ in~$S$. 
In the neighborhood of every point we may find two Killing vector fields~$A$ and~$B$ of~$\nabla_s$ such that~$[A,B]=B$. The vector field~$B$, the generator of the derived Lie algebra of~$\mathfrak{K}(\nabla_s)$ is well-defined up to multiplication by a constant. At the points where~$B$ does not vanish, define~$\mathcal{F}$ as the foliation tangent to~$B$. At the points where~$B$ vanishes, $B$ is, in suitable coordinates, a nonzero multiple of~$x\indel{y}$ (proof of Theorem~\ref{thm-local}; see Table~\ref{table1}). In this case, define~$\mathcal{F}$ locally as the kernel of~$dx$. This gives a foliation without singularities in all of~$S$, which is thus a torus.

Let~$p$ be a point in the boundary of~$\Omega$. If~$B$ does not vanish at~$p$, then the orbit of~$B$ through~$p$ (a leaf of~$\mathcal{F}$) gives, locally, the boundary of~$\Omega$. If~$B$ vanishes at~$p$, in the coordinates of Theorem~\ref{thm-local}, $\Omega$ is the complement of~$\{x=0\}$ and~$\mathcal{F}$ is given by the kernel of~$dx$. In all cases, the complement of~$\Omega$ is a union of leaves of~$\mathcal{F}$ and is thus   a finite union  of circles. By the Poincar\'e-Hopf index Theorem, every connected component of~$\Omega$ is a cylinder, as we claimed.\\

Thus, $S\setminus\Omega $ is a finite number of homologous simple closed curves. Let~$C$ be a connected component of~$\Omega$. We will denote by~$\overline{C}$ the manifold-with-boundary obtained by adding to each end of~$C$ the circle in~$S\setminus\Omega$ that compactifies it (this is, even if $S\setminus\Omega$ has only one connected component, $\overline{C}$ will still be considered as a manifold with two boundary components). Let~$\partial C$ be a boundary component of~$\overline{C}$. Let~$W$ be a tubular neighborhood of~$\partial C$ within~$S$. Let~$\Pi:\widetilde{C}\to C$ and~$P:\widetilde{W}\to W$ be universal coverings of~$C$ and~$W$. Since the inclusions of~$C\cap W$ in~$C$ and in~$W$ are homotopy equivalences, there is a natural identification between~$\Pi^{-1}(C\cap W)$ and~$P^{-1}(C\cap W)$. 

The restriction of~$\nabla_s$ to~$C$ is locally homogeneous at every point. By Proposition~\ref{constgx}, Remark~\ref{rem-gxorient} and item~(\ref{isomleft-ori}) of Proposition~\ref{isomleft}, there exists a left-invariant connection~$\nabla_0$ on~$\mathrm{Aff}_0(\mathbf{R})$, a \emph{developing map}~$\mathcal{D}:\widetilde{C}\to\mathrm{Aff}_0(\mathbf{R})$, which is a local  isomorphism  between~$\Pi^{-1}(\nabla_s)$ and~$\nabla_0$, and a \emph{holonomy} morphism~$\mu:\pi_1(C)\to \mathrm{Aff}_0(\mathbf{R})$ such that for every~$q\in\widetilde{C}$ and~$\gamma\in\pi_1(C)$,
\begin{equation}\label{developing}\mathcal{D}(\gamma \cdot q)=\mu(\gamma)\mathcal{D}(q),\end{equation}
where~$\gamma \cdot q$ is the image of~$q$ under the deck transformation associated to~$\gamma$. \\

We now claim that \emph{the image of the holonomy is non-trivial and belongs to a semisimple one-parameter group}. Let us prove this. If the holonomy  is trivial, the developing map is, by formula~(\ref{developing}), invariant by the action of~$\pi_1(C)$ by deck transformations and, in consequence, induces a well-defined map $\mathcal{D}^\flat:C\to\mathrm{Aff}_0(\mathbf{R})$. The Killing vector fields of~$\nabla_s$ are then globally well-defined on $C$: they are  pull-back of  right-invariant vector fields on~$\mathrm{Aff}_0(\mathbf{R})$. The Killing vector fields of~$\nabla_s$ in~$C$ are complete, for they preserve the boundary, and hence induce a (transitive)  action of~$\mathrm{Aff}_0(\mathbf{R})$ on~$C$. The map~$\mathcal{D}^\flat$ is equivariant with respect to this action of~$\mathrm{Aff}_0(\mathbf{R})$ on~$C$ and its action by left translations upon itself. But the first action has a non-trivial stabilizer and the second one is free. This contradiction shows that the holonomy cannot be trivial.

Assume, by contradiction, that  the image of the holonomy is generated by an element of the form~$(1,\tau)$ with~$\tau\neq 0$. This is the flow of~$B_0$ in time~$\tau$. In particular, $B_0$ is holonomy-invariant and induces a well-defined Killing vector field~$B_s$ in~$C$, which preserves the  boundary  and is thus complete. Furthermore, its solutions are all periodic and have  period~$\tau$ (it is \emph{isochronous}). The developing map~$\mathcal{D}$ maps~$\Pi^{-1}_*(B_s)$ to a constant multiple of~$B_0=\partial/\partial v$. The completeness of~$B_s$ in~$C$ implies that the image of the developing map~$\mathcal{D}(\widetilde{C})$ is saturated by~$B_0$, and is thus of the form $V=\{u_0<u<u_1\}$, with~$0\leq u_0<u_1\leq\infty$. 

Let~$p\in \partial C$. By Theorem~\ref{thm-local}, there exists a neighborhood~$U$ of~$p$ and a mapping~$\Psi_0:(U,p)\to(\mathbf{R}^2,O)$ that maps~$\nabla_s$ to a connection~$\nabla$ of Type~$\mathrm{I}$ or of Type~$\mathrm{II}$. The image of~$B_s$ under this mapping is a multiple of the vector field~$B$, $\Psi_0(B_s)=\lambda B$ for some~$\lambda\in\mathbf{R}$. (Notice that~$\lambda B$ is a complete vector field in~$\mathbf{R}^2$.) By its isochronicity, $B_s$ cannot vanish at~$p$ and thus~$B$ does not vanish at~$O$: the connection~$\nabla$ cannot be of Type~$\mathrm{II}$ and is necessarily of Type~$\mathrm{I}$ (by Prop.~\ref{propaffine}).

Suppose, without loss of generality, that~$W$ (the tubular neighborhood of~$\partial C$) is saturated by the flow of~$B_s$. Let~$T\subset \widetilde{W}$ be a transversal intersecting once and only once each orbit of~$P^{-1}_*B_s$ and such that~$P(T)\subset U$.  For each~$q\in \widetilde{W}$, let~$t_q\in\mathbf{R}$ be such that~$\Phi_{P_*^{-1}B_s}^{t_q}(q)$, the flow of~$P_*^{-1}B_s$ in time~$t_q$ applied to~$q$, belongs to~$T$. Let~$\Psi:\widetilde{W}\to\mathbf{R}^2$ be the mapping defined by
\begin{equation}\label{analogue}\Psi(q)=\Phi_{\lambda B}^{-t_q}\circ\Psi_0\circ P \circ\Phi_{P_*^{-1}B_s}^{t_q}(q).\end{equation}
It is a well-defined  diffeomorphism onto its image which  maps~$P^{-1}(\nabla_s)$ to~$\nabla$ 
and $P^{-1}_*B_s$ to~$\lambda B$ (it is, in some sense, a developing map).

The vector field $\lambda B$ has the first integral~$h(x,y)=x$ and $(X\cdot h)/h=-1/n<0$ (see Table~\ref{table1}). The function~$\Psi^*h$ and the vector field~$\Psi^{-1}_*X$ are defined in~$\widetilde{W}$. They are invariant under the flow of~$P^{-1}(B_s)$. In consequence, their images $h_s=(\Psi\circ P^{-1})^*h$ and~$X_s=P_*\Psi^{-1}_*X$  are well-defined in~$W$ and satisfy the same relation.

Hence, for each point~$q$ in~$C\cap W$, the orbit of $X_s$ starting at~$q$ is defined for all positive time. Moreover, this orbit is properly embedded in~$C$ and, as time goes to infinity, accumulates to~$\partial C$, the zero set of~$h_s$.

The image of~$\Pi^{-1}_*X_s$ under~$\mathcal{D}$ is a vector field of the form~$X_0+\lambda Y_0=u\partial/\partial u+[\cdots]\partial/\partial v$. By our previous arguments, for the initial conditions coming from~$C\cap W$, the forward orbits of this vector field are complete in~$V$. We conclude that the constant~$u_1$ defining $V$ is infinite and that~$p$ belongs to the end of~$C$ corresponding to~$u_1$. The  normal forms do not allow us to describe the end of~$C$ corresponding to~$u_0$, this is, such an end is impossible to compactify. This shows, by contradiction,  that the holonomy is not of the form~$(1,\tau)$.

We conclude that the holonomy is generated by an element of the form~$(e^{\tau},v)$ with~$\tau\neq 0$. It is conjugated to~$(e^{\tau},0)$, the flow of~$A_0$ in time~$\tau$.  This proves our claim. \\

We will henceforth suppose that the holonomy is generated by the flow of~$A_0$ in time~$\tau$. The holonomy preserves $A_0$ and  induces, via~$\mathcal{D}$, a well-defined Killing vector field~$A_s$ in~$C$. As in the  previous case, $A_s$ will be complete and isochronous with period~$\tau$.  The image of the developing map~$\mathcal{D}(\widetilde{C})$, being saturated by~$A_0$, is a cone of the form~$V=\{c_0<v/u<c_1\}$,
with~$-\infty\leq c_0<c_1\leq\infty$. There is a one-to-one mapping~$\mathcal{D}^\flat:C\to\langle \Phi_{A_0}^\tau \rangle\backslash  V$.

Let~$\partial C$ be a boundary component of~$C$ and let~$p\in\partial C$. The vector field~$A_s$ does not vanish at~$p$. Let~$B_s\in\mathfrak{K}(\nabla_s)$ be a Killing vector field in a neighborhood of~$p$  satisfying~$[A_s,B_s]=B_s$ (it is well-defined up to multiplication by a constant). Either $B_s$ vanishes identically along~$\partial  C$ or it is everywhere non-zero (by Prop.~\ref{propaffine}, the two cases lead to different normal forms for $\nabla_s$ at $p$).
\begin{itemize}
\item \emph{If~$B_s$ vanishes identically along~$\partial C$}, then  there exists~$n\in\frac{1}{2}\mathbf{Z}$, parameters~$(\gamma,\epsilon,\phi)$ and a map~$\Psi_0:(U,p)\to(\mathbf{R}^2,(0,1))$ mapping~$\nabla_s$ to the  connection~$\nabla$ of Type~$\mathrm{II}^1(n)$, with parameters~$(\gamma,\epsilon,\phi)$, and mapping~$C\cap U$ to~$\{x>0\}$ (Theorem~\ref{thm-local} and Remark~\ref{twoside}). Let~$E$ be the vector field in~$\mathbf{R}^2$ that is mapped by~$\mathcal{D}\circ\Psi_0^{-1}$ to~$A_0$. It is of the form~$A+\lambda B$, for some $\lambda \in \mathbf{R}$. By post-composition of $\Psi_0$ with a suitable time of the flow of~$B$, we will suppose that~$E=A$. The vector field $E$ has the primitive first integral~$h(x,y)=xy^{1/(n-1)}$ and $X\cdot \log(h)=1/(1-n)<0$.

\item \emph{If~$B_s$ does not vanish identically along~$\partial C$}, then there exists~$n\in\frac{1}{2}\mathbf{Z}$, parameters~$(\gamma,\epsilon,\phi)$, and a map~$\Psi_0:(U,p)\to(\mathbf{R}^2,0)$ mapping~$\nabla_s$ to the  connection~$\nabla$ of Type~$\mathrm{I}(n)$, with parameters~$(\gamma,\epsilon,\phi)$ and mapping~$C\cap U$ to~$\{x>0\}$. Let~$E=A+\lambda B$ ($\lambda\neq 0$) be the vector field that maps into~$A_0$ via~$\mathcal{D}\circ \Psi_0^{-1}$. Up to a change of coordinates given by the flow of~$A$ and since~$\rho_*B=-B$ for~(\ref{miroirtau}), we may suppose that~$\lambda=-1$, this is, $E=A-B$. It has the primitive first integral~$h(x,y)=x(1-y)^{1/n}$ that satisfies $X\cdot \log(h)=-1/n<0$. 
\end{itemize}

Define~$\Psi:\widetilde{W}\to\mathbf{R}^2$ in a way analogue to~(\ref{analogue}). It is a diffeomorphism onto its image, maps~$P^{-1}(\nabla_s)$ to~$\nabla$ and $P^{-1}_* A_s $ to~$E$. Moreover, it is equivariant with respect to the flow of~$P^{-1}_* A_s$ in~$\widetilde{W}$ and the flow of~$E$ in~$\Psi(\widetilde{W})\subset\mathbf{R}^2$. Hence, if~$\Phi_E^\tau:\mathbf{R}^2\to\mathbf{R}^2$ denotes the flow of~$E$ in time~$\tau$, there is a one-to-one mapping~$\Psi^\flat:W\to \Psi(\widetilde{W})/\langle\Phi_E^\tau\rangle$.

As before, the function $h_s=(\Psi\circ P^{-1})^*h$ and the vector field~$X_s=P_*\Psi^{-1}_*X$ are well-defined in~$W$ and~$X_s\cdot \log(h_s)$ is a strictly negative real. The level curves of~$h_s$ are circles. Those in~$C$ are  orbits of~$A_s$ and~$\partial C=h_s^{-1}(0)$. For each point~$q$ in~$C\cap W$, the orbit of $X_s$ is, in positive time, \emph{complete  and properly embedded}  in~$C$.

Suppose that the end of~$C$ associated to~$\partial C$ corresponds to the end of~$V$ associated to~$c_1$.  The image of~$\Pi^{-1}_*X_s$ under~$\mathcal{D}$ is a vector field of the form~$Z=X_0+\lambda Y_0=u(\partial/\partial u-\lambda \partial/\partial v)$. For the forward orbits of~$Z$ to be complete and properly embedded in~$\langle \Phi_{A_0}^\tau \rangle\backslash V$, we must have that~$V$ is, on the corresponding side, bounded by~$v/u=-\lambda$, the common orbit of~$Z$ and~$A_0$ (otherwise, $Z$ will not be complete in~$\langle \Phi_{A_0}^\tau \rangle\backslash V$ or its orbits will not be properly embedded: see Figure~\ref{fig:propemb}). In particular, $c_1<\infty$.\\

\begin{figure}
 \includegraphics[width=3in]{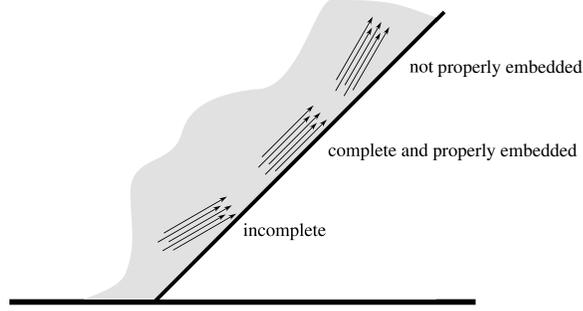}
\caption{In the quotient of the cone~$V$, the forward orbits of a semisimple left-invariant field may be incomplete, may not be properly embedded or, in only one case, may be complete and properly embedded.}\label{fig:propemb}
\end{figure}

Thus, the connection~$\nabla_0$ has a marking~$Z$ and~$V$ is bounded by the unique common orbit of~$Z$ and~$A_0$. Since~$Z$ is defined in all of~$V$,  the vector field~$X_s$ may be extended to all of~$C$. Every orbit of~$X_s$ in~$C$ accumulates to~$\partial C$ as time goes to infinity. 

For the other boundary component~$\partial'C$ of~$\overline{C}$ we define analogue  objects  $X_s'$, $h_s'$, $Z'$, $n'$, $\nabla_0'$, $(\gamma',\epsilon',\phi')$. Notice that~$X_s$ and~$X_s'$ are different, since their orbits in positive time converge to different boundary components of~$\overline{C}$. \\

In the neighborhood of a point~$p$ in~$\partial C$, $\Psi_0(X_s)=X$ and~$\Psi_0(X_s')=X+\lambda Y$ for some~$\lambda\in\mathbf{R}$. By Remark~\ref{deuxtypes}, the nature of~$\nabla_s$ at~$\partial' C$ (if it is either of Type~$\mathrm{I}$ or of Type~$\mathrm{II}$) and the value of~$n'$ determine the value of~$(\gamma,\epsilon,\phi)$ in~$\nabla$, up to the natural equivalence. In particular, by (the proof of) Lemma~\ref{nonvanishingparams}, $\gamma$ and~$\phi$ cannot vanish simultaneously. We must conclude that~$n\in\mathbf{Z}$. In an analogue way, $n'\in\mathbf{Z}$.\\

We now have that~$\overline{C}$ is very similar to the quotient of~$V_0$ (as defined in the previous section) under the flow of~$A$ in time~$\tau$, except for the assumption upon the parity of~$n$ and~$n'$. We will now prove that \emph{if~$n$ is even (resp. if~$n$ is odd), $\nabla_0$ is of Type~$\mathrm{I}$ (resp. of Type~$\mathrm{II}$).}\\

Let us consider the cylinder~$C'$ neighboring~$C$ across~$\partial C$. It has the boundary components~$\partial C$ and~$\partial''C$. Consider the coordinate~$\Psi_0$ around the point~$p\in\partial  C$ and suppose (Remark~\ref{twoside}) that this coordinates map~$C \cap U$ to~$\{x>0\}$. In~$\{x>0\}$ we have two vector fields~$X$ and~$X+\lambda Y$ that give two special (Definition~\ref{defnmarking}) markings for the corresponding connection in the affine plane. These two vector fields induce also two special markings for the connection in the affine plane induced by the restriction of~$\nabla_s$ to~$C'$. However, an invariant connection on the affine group  does not have three special markings:  indeed, by contradiction, from formula~(\ref{nothreespmarkings}),
\begin{itemize}
\item if there are two markings of Type~$\mathrm{I}$, $\delta=1$ and~$\delta'=1$, and thus $\alpha''=\delta''$: the third special marking must be of Type~$\mathrm{II}$ (for~$\delta''\neq\alpha''+1$), and thus  $n=-1$. This is impossible: we conclude that the connection does not have three special markings;
\item if there are two markings of Type~$\mathrm{II}$, $\delta=1+\alpha$ and~$\delta'=1+\alpha$ and thus $3\delta''=\alpha''+1$. If the third marking is of Type~$\mathrm{II}$, then $\delta''=0$ and~$\alpha''=-1$; if it is of Type~$\mathrm{I}$,  then $\delta''=1$ and~$\alpha''=2$. In both cases this is in contradiction with the possible values of $n$. We must conclude that the connection does not have three special markings.
\end{itemize}
We conclude that \emph{the vector field~$X+\lambda  Y$ induces both the special marking corresponding to~$\partial' C$ and the one corresponding to~$\partial'' C$}. Hence, in~$C$ and in~$C'$, the special markings are given by~$X$ and~$X+\lambda Y$.  For the connections of Type~$\mathrm{II}^1$, we have
$$\frac{E\wedge (X+\lambda Y)}{E\wedge X}=1+\lambda \frac{E\wedge Y}{E\wedge X}=1+\frac{\lambda}{x^{n-1}y}, $$
whose sign, close to~$(0,1)$, is determined by the sign of~$\lambda/x^{n-1}$. For connections of Type~$\mathrm{I}$, we have
$$\frac{E\wedge (X+\lambda Y)}{E\wedge X}=1+\lambda \frac{E\wedge Y}{E\wedge X}=1+\frac{\lambda}{x^n(y+1)}. $$
Its sign is controlled by~$\lambda/x^n$.  But we previously established in~(\ref{orient}) that these expressions must be negative. Hence, $n$ must be an even integer for Type~$\mathrm{I}$ and an odd one for Type~$\mathrm{II}^1$, as claimed.\\

This implies that~$\overline{C}$ is covered by~$\overline{V}_0$ for the connection~$\nabla_{n,n'}$ of the previous section and that the same is true of~$\overline{C'}$, since the connection in the universal covering is determined by the numbers attached to the special markings and since the period of~$A_s$ is the same in both~$C$ and~$C'$. This implies that~$(S,\nabla_s)$ is a global quotient of~$(M,\nabla_{n,n'})$. This finishes the proof of Theorem~\ref{thm-global} in the case where~$\mathfrak{K}(\nabla)\approx\mathfrak{aff}(\mathbf{R})$.

\subsection{When the killing algebra is isomorphic to~$\mathfrak{sl}(2,\mathbf{R})$}   \label{sl2} We prove here:

\begin{proposition} Let $S$ be a compact real analytic surface endowed with a  real-analytic torsion-free affine connection $\nabla_s$. If the Killing algebra of~$\nabla_s$ is isomorphic to  $\mathfrak{sl}(2,\mathbf{R})$ and $\nabla_s$ is locally homogeneous on a nontrivial open set in $S$, then $\nabla_s$ is locally homogeneous on all of $S$.
\end{proposition}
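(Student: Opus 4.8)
The plan is to argue by contradiction: assume that the maximal set~$\Omega\subset S$ on which~$\mathfrak{K}(\nabla_s)$ has rank two is nonempty but not all of~$S$, and show that this is incompatible with the compactness of~$S$. First I would determine the structure of~$S\setminus\Omega$. At a boundary point~$p$ of~$\Omega$ the germ of~$\nabla_s$ satisfies the hypotheses of Theorem~\ref{thm-local}, hence is of Type~$\mathrm{I}$, $\mathrm{II}$ or~$\mathrm{III}$; since the Killing algebra does not depend on the point and is isomorphic to~$\mathfrak{sl}(2,\mathbf{R})$, whereas Types~$\mathrm{I}$ and~$\mathrm{II}$ have Killing algebra~$\mathfrak{aff}(\mathbf{R})$, the germ at~$p$ is necessarily of Type~$\mathrm{III}$. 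By the Remark following Theorem~\ref{thm-local}, the non-homogeneity locus of such a germ is a single point, so~$p$ is isolated in~$S\setminus\Omega$ and, by compactness, $S\setminus\Omega=\{p_1,\dots,p_N\}$ is finite. This is already in sharp contrast with the affine case of Section~\ref{globalaff}, where that locus is a union of closed geodesics. In the Type~$\mathrm{III}$ normal form the whole of~$\mathfrak{K}(\nabla_s)$ vanishes at~$p_i$ and, by Lemma~\ref{exponential}, is realized in exponential coordinates as the standard representation of~$\mathfrak{sl}(2,\mathbf{R})$ by linear divergence-free fields on~$\mathbf{R}^2$; thus each~$p_i$ is an isolated fixed point, and a punctured neighborhood of~$p_i$ is a piece of the unique two-dimensional orbit, realizing the end~$r\to 0$ of the model~$X=\mathbf{R}^2\setminus\{0\}$ of~$SL(2,\mathbf{R})$.

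The cleanest way to finish would be to reduce to the unimodular setting treated in~\cite{sorin-locally}. Consider the connection induced by~$\nabla_s$ on the line bundle~$\Lambda^2 T^*S$ (which is trivial after passing, if necessary, to the oriented double cover); its curvature is a globally defined analytic two-form~$\kappa$. A direct computation from the Type~$\mathrm{III}$ Christoffel symbols gives, for the trace one-form, $T=3\gamma(y\,dx-x\,dy)$ and hence~$\kappa=\pm 6\gamma\,dx\wedge dy$, where~$\gamma$ is the Type~$\mathrm{III}$ parameter. On each orbit~$\kappa$ is an~$\mathfrak{sl}(2,\mathbf{R})$-invariant two-form, hence a constant multiple of the invariant area~$dx\wedge dy$. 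If one can show that~$\gamma=0$ at every~$p_i$, then the invariant area is parallel, $\nabla_s$ is unimodular, and the theorem of~\cite{sorin-locally} quoted in the introduction gives at once that~$\nabla_s$ is locally homogeneous on all of~$S$, contradicting~$\Omega\neq S$.

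The heart of the matter, and the step I expect to be the main obstacle, is precisely to force~$\gamma=0$, equivalently to rule out a second analytic Type~$\mathrm{III}$ cap. Following the method of Section~\ref{globalaff}, I would develop a connected component~$C$ of~$\Omega$ into~$X$ (Proposition~\ref{constgx}). Because the Killing fields are single-valued on a full disk around each~$p_i$, the holonomy around a small loop encircling~$p_i$ is trivial, and the chart~$\Psi_0$ of Theorem~\ref{thm-local} is itself the developing map near~$p_i$, identifying~$C$ near~$p_i$ with the end~$r\to 0$ of~$X$. Since~$\overline{C}\setminus C\subseteq\{p_1,\dots,p_N\}$, compactness forces \emph{every} end of~$C$ to be capped by a Type~$\mathrm{III}$ point; in particular the end~$r\to\infty$ of the orbit must accumulate on some~$p_j$, where it would again have to realize the end~$r\to 0$. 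But the two ends of~$X=\mathbf{R}^2\setminus\{0\}$ are preserved by the connected group~$SL(2,\mathbf{R})$, and the only~$SL(2,\mathbf{R})$-equivariant analytic self-maps of~$X$ are the homotheties~$v\mapsto a v$, all of which preserve each end. Matching the cap at~$p_i$ with the cap at~$p_j$ produces an equivariant transition that would have to interchange the two ends — impossible. Equivalently, in terms of~$\kappa$: near the far end the invariant form~$\pm 6\gamma\,dx\wedge dy$ blows up relative to any smooth area at~$p_j$ unless~$\gamma=0$, while~$\kappa$ is a smooth (indeed exact, so integrating to zero) two-form on the compact~$S$.

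The real work, therefore, lies in controlling the global structure of the orbit~$C$: one must establish the completeness and properness of the developing map along the one-parameter subgroups of~$\mathfrak{sl}(2,\mathbf{R})$ — in the spirit of the analysis of semisimple Killing fields carried out in Section~\ref{globalaff} — and handle the possibility that~$C$ is a nontrivial cover or quotient of the model, so that the identification of the two ends, and hence the equivariance of the transition map, is genuinely justified. Once this is in place the contradiction is immediate, either directly (the~$r\to\infty$ end cannot be a Type~$\mathrm{III}$ cap) or through the vanishing of~$\gamma$ and the appeal to~\cite{sorin-locally}. In both formulations the conclusion is the same: a compact surface with Killing algebra~$\mathfrak{sl}(2,\mathbf{R})$ that is locally homogeneous on a nonempty open set is locally homogeneous everywhere, so the~$\mathfrak{sl}(2,\mathbf{R})$ case does not occur.
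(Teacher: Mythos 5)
Your opening is exactly the paper's: at boundary points of~$\Omega$ the germ must be of Type~$\mathrm{III}$ (the Killing algebra being $\mathfrak{sl}(2,\mathbf{R})$ rules out Types~$\mathrm{I}$ and~$\mathrm{II}$), so $S\setminus\Omega$ is a finite set of points, each capping an end of the model $X=\mathbf{R}^2\setminus\{0\}$ at its $r\to 0$ side. Your proposed endgame is also sound in principle, and is essentially equivalent to the paper's: the paper derives the contradiction from the fact that the $\mathrm{SL}(2,\mathbf{R})$-invariant area form has infinite integral on each of the two copies of $\mathbf{R}^2$ glued along $\mathbf{R}^2\setminus\{0\}$, while your version says the $r\to\infty$ end cannot be matched to an $r\to 0$ cap because equivariant self-maps of~$X$ are homotheties and preserve the ends. (Your computation that $\gamma=0$ is exactly the unimodularity condition is also correct, though the detour through~\cite{sorin-locally} is unnecessary once the structure is known.)

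The genuine gap is precisely the step you yourself defer as ``the real work'': you never establish the global structure of~$\Omega$ --- that it is connected, that it has exactly two ends, that the Killing fields (a priori only multivalued, since the model homogeneous space is the \emph{universal cover} of $\mathbf{R}^2\setminus\{0\}$ and the holonomy could be a nontrivial deck transformation along a loop of~$\Omega$) are single-valued, and that the developing map identifies~$\Omega$ with $\mathbf{R}^2\setminus\{0\}$ rather than with a cover or quotient of it. Without this, the end-matching argument cannot be run: one cannot even speak of ``the $r\to\infty$ end of~$C$'' accumulating on some~$p_j$. The paper resolves all of this at once with a short topological device that your proposal lacks: the centralizer of $\mathfrak{K}(\nabla_s)$ in the $\widetilde{\mathrm{GL}(2,\mathbf{R})}$-action is the Euler field $x\indel{x}+y\indel{y}$, which defines a singular analytic foliation on all of~$S$ whose only singularities are the points of $S\setminus\Omega$, each of Poincar\'e--Hopf index~$1$. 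Hence $\chi(S)=\#(S\setminus\Omega)>0$, so $S$ is a sphere and there are exactly two exceptional points; simple connectedness then makes every Killing field globally defined, forces the $\widetilde{\mathrm{SL}(2,\mathbf{R})}$-action to factor through a faithful transitive $\mathrm{SL}(2,\mathbf{R})$-action on the cylinder $\Omega\cong\mathbf{R}^2\setminus\{0\}$, and only then does the area (or end-matching) contradiction go through. You should either supply this index argument or an equivalent substitute before your conclusion can be considered proved.
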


Let~$\Omega\subset S$ be the maximal  open set where there Killing algebra is transitive. The normal form of $\nabla_s$ at points of the boundary of~$\Omega$ is that of  Type~$\mathrm{III}$. In particular, $\mathfrak{K}(\nabla_s)\approx\mathfrak{sl}(2,\mathbf{R})$ and the points of $S \setminus \Omega$ are isolated ($\Omega$ is connected). The restriction of~$\nabla_s$ to~$\Omega$ is locally homogeneous everywhere and is locally modeled in the homogeneous space of~$\widetilde{\mathrm{SL}(2,\mathbf{R})}$ (the universal covering  of $\mathrm{SL}(2, \mathbf{R})$) associated to the subgroup generated by~$\left(\begin{array}{cr} 0 & 1 \\ 0 & 0 \end{array}\right)\in\mathfrak{sl}(2,\mathbf{R})$. This homogeneous space is, naturally, the universal covering of~$\mathbf{R}^2\setminus\{0\}$. Notice that it is also acted on  transitively by $\widetilde{\mathrm{GL}(2,\mathbf{R})}$.

Let us  define a foliation with singularities~$\mathcal{F}$ on~$S$. For each point of~$\Omega$, define~$\mathcal{F}$ as the foliation tangent to the centralizer of~$\mathfrak{K}(\nabla_s)$ in the Lie algebra
generated by the $\widetilde{\mathrm{GL}(2,\mathbf{R})}$-action. In~$\mathbf{R}^2\setminus\{0\}$, this centralizer is generated by~$x\indel{x}+y\indel{y}$. For the points~$p\notin\Omega$, in the coordinates of Theorem~\ref{thm-local}, the foliation extends as the foliation with singularities induced by the previous vector field at~$0$.  These are the only singularities of~$\mathcal{F}$. They have Poincar\'e-Hopf index~$1$. Hence, $S$ is a sphere, the complement of~$\Omega$ consists of two points and~$\Omega$ is a cylinder. 

Since~$S$ is simply connected, all the killing vector fields are globally well-defined in~$S$ and induce an action of~$\widetilde{\mathrm{SL}(2,\mathbf{R})}$ that, in fact (from the normal forms in the neighborhood of the points that do not belong to~$\Omega$), factors through a faithful action of~$\mathrm{SL}(2,\mathbf{R})$. By the normal forms, the restriction of this last action to~$\Omega$ is transitive.
Thus $\Omega$ identifies with $\mathbf{R}^2\setminus\{0\}$, seen as a homogeneous space of $\mathrm{SL}(2,\mathbf{R})$. 

We can  see~$S$ as the identification of two copies of~$\mathbf{R}^2$ glued along~$\mathbf{R}^2\setminus\{0\}$ in a $\mathrm{SL}(2,\mathbf{R})$-equivariant way. In particular, there is an invariant area form on~$S$ (which is a constant multiple of the standard area form  in restriction to each copy of~$\mathbf{R}^2$). Since $S$ is compact, the integral of the area form is finite. In the copies of~$\mathbf{R}^2$, the integral of the area form is infinite. This contradiction finishes the proof of the Proposition and of Theorem~\ref{thm-global}.

\section{Connections in~$\mathbf{R}^2$ that are invariant under the special linear group}   \label{sl2chapter}

We will now study the geodesics of the connection of Type~III in~$\mathbf{R}^2$. We will prove that such a connection is geodesically complete if and only if~$\gamma=0$. We may describe the connections and their geodesics by making use of the \emph{special affine curvature} of curves in the plane (see, for example, \cite[Chapter~I]{spivak}):

\begin{proposition} A non-flat torsion-free  real-analytic  geodesically complete connection $\nabla$  in~$\mathbf{R}^2$, invariant under the action of~$\mathrm{SL}(2,\mathbf{R})$, is of the form
$$\nabla_{\del{x}}\del{x}=ky^2E,\;
 \nabla_{\del{x}}\del{y}=-kxyE,\;
 \nabla_{\del{y}}\del{y}=kx^2E,$$
for~$E=x\indel{x}+y\indel{y}$ and some~$k\in\mathbf{R}^*$. The parametrized geodesics are either:
\begin{itemize}
 \item  lines through the origin at constant speed  or
\item conics of special affine curvature~$\sqrt[3]{k}$, centered at the origin and parametrized at constant speed with respect to special affine arc length.
\end{itemize}
\end{proposition}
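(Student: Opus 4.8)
The plan is to first pin down all the invariant connections, then read off completeness from a single conserved quantity, and finally integrate the geodesic equation explicitly.

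First I would identify the invariant connections. Since $\nabla$ is invariant under $\mathrm{SL}(2,\mathbf{R})$, the divergence-free linear vector fields are Killing; as this algebra is transitive on $\mathbf{R}^2\setminus\{0\}$, the restriction of $\nabla$ there is locally homogeneous and hence, by Theorem~\ref{thm-local} (and the discussion of Type~$\mathrm{III}$ in Section~\ref{fixed points}), is a connection of Type~$\mathrm{III}$, with parameters $(\gamma,\epsilon)$. One sees this directly as well: writing $\alpha=x\,dy-y\,dx$ for the (up to scale unique) $\mathrm{SL}(2,\mathbf{R})$-invariant $1$-form and $E=x\del{x}+y\del{y}$ for the invariant radial field, the space of $\mathrm{SL}(2,\mathbf{R})$-invariant symmetric $(1,2)$-tensor fields on $\mathbf{R}^2$ is two-dimensional; one basis element is $(\alpha\otimes\alpha)\otimes E$ and accounts for the $\epsilon$-term, the other for the $\gamma$-term. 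Thus it suffices to treat the two-parameter Type~$\mathrm{III}$ family.

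Next I would exhibit the conserved quantity. Along a geodesic $c(t)=(x(t),y(t))$ set $w=x\dot y-y\dot x=\alpha(\dot c)$, the areal velocity. Substituting the Christoffel symbols of Type~$\mathrm{III}$ into $\dot w=x\ddot y-y\ddot x$, the $\epsilon$-contributions cancel and one is left with $\dot w=2\gamma w^2$. If $\gamma\neq 0$, any geodesic with $w(0)\neq0$ has $w(t)=w(0)/(1-2\gamma w(0)t)$, which blows up in finite time; since a bounded $(c,\dot c)$ would keep $w$ bounded, the geodesic leaves every compact set of the tangent bundle in finite time and is therefore incomplete. Hence geodesic completeness forces $\gamma=0$, in which case $w$ is a first integral. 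With $\gamma=0$ the connection becomes $\nabla_UV=k\,\alpha(U)\alpha(V)\,E$ with $k=-\tfrac12\epsilon$, which is exactly the stated form, and non-flatness gives $k\neq0$. Because $w$ is now constant, the equation $\nabla_{\dot c}\dot c=0$ collapses to the linear constant-coefficient system $\ddot c=-kw^2\,c$. When $w=0$ the velocity is radial and $c$ is a line through the origin traversed at constant speed; when $w\neq0$ the linear flow is elliptic (for $k>0$) or hyperbolic (for $k<0$) and its orbits are conics centered at the origin. In every case solutions exist for all time, so $\gamma=0$ indeed yields a complete connection, which closes the equivalence.

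Finally I would compute the special affine curvature. Differentiating $[\dot c,\ddot c]$ and using $\ddot c=-kw^2 c$ shows that it equals the constant $kw^3$, so $t$ is a constant multiple of special affine arc length $s$, with $ds=\sqrt[3]{kw^3}\,dt$. Differentiating $\ddot c=-kw^2 c$ once more gives $\dddot c=-kw^2\dot c$; rescaling to the parameter $s$ turns this into $c_{sss}=-\sqrt[3]{k}\,c_s$, the defining relation of a curve of constant special affine curvature $\sqrt[3]{k}$ (positive for the ellipses when $k>0$, negative for the hyperbolas when $k<0$). The two steps I expect to require the most care are the clean bookkeeping of the cube roots and sign conventions in this last computation, and the rigorous passage from the finite-time blow-up of $w$ to genuine geodesic incompleteness.
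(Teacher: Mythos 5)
Your argument is correct and follows essentially the same route as the paper: the decisive step in both is the Riccati equation $\dot w=2\gamma w^2$ for the areal velocity $w=x\dot y-y\dot x$ (the paper's $u'=2\gamma u^2$, obtained via an $\mathrm{SL}(2,\mathbf{R})$-moving frame), whose finite-time blow-up forces $\gamma=0$. You finish by reducing to the linear system $\ddot c=-kw^2c$ and computing the special affine curvature directly, whereas the paper exhibits the model geodesics $(\cos s,\sin s)$ and $(\cosh s,\sinh s)$ and invokes $\mathrm{SL}(2,\mathbf{R})$-invariance and homotheties; this difference is cosmetic.
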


A straightforward calculation shows that the curve~$s\mapsto (s,0)$ is a geodesic of every connection of Type~III and thus every line through the origin parametrized at constant speed is a geodesic. Let~$v(s)=(x(s),y(s))$ be a non-constant geodesic of~$\nabla$ that is not a line through the origin, this is, such  that $v(s)$ and~$v'(s)$ are linearly independent. For each~$s$, let~$A(s)\in\mathrm{SL}(2,\mathbf{R})$ such that~$A(s)v(s)=(1,0)$ and such that~$A(s)v'(s)=(0,u(s))$. We have~$u =x y' -y x'$. The equation of the geodesics becomes the first-order non-linear equation
$u'=2\gamma \tau^2$. The solution to this equation with initial condition~$\tau_0$ is
$$u(s)=\frac{u_0}{1-2\gamma u_0 s}.$$
We must conclude that, if the connection is geodesically complete, $\gamma=0$. In this case, if~$\epsilon=-2$, then a  geodesic is given by~$v(s)=(\cos(s),\sin(s))$. By invariance, the other geodesics are ellipses of area~$1$ centered at the origin (equivalently, ellipses centered at the origin whose special affine curvature is~$1$). If~$\epsilon=2$, then a  geodesic is given by~$v(s)=(\cosh(s),\sinh(s))$ (a hyperbola centered at the origin with special affine curvature equal to~$1$). By the natural rescalings induced by homotheties, we obtain the proof of the Proposition.

\bibliography{lesreferences}

\begin{thebibliography}{BMM08}

\bibitem[Amo79]{Amores}
A.~M. Amores.
\newblock Vector fields of a finite type {$G$}-structure.
\newblock {\em J. Differential Geom.}, 14(1):1--6 (1980), 1979.

\bibitem[Ben97]{Benoist2}
Yves Benoist.
\newblock Orbites des structures rigides (d'apr\`es {M}. {G}romov).
\newblock In {\em Integrable systems and foliations/{F}euilletages et
  syst\`emes int\'egrables ({M}ontpellier, 1995)}, volume 145 of {\em Progr.
  Math.}, pages 1--17. Birkh\"auser Boston, Boston, MA, 1997.

\bibitem[BF05]{BF}
E.~Jerome Benveniste and David Fisher.
\newblock Nonexistence of invariant rigid structures and invariant almost rigid
  structures.
\newblock {\em Comm. Anal. Geom.}, 13(1):89--111, 2005.

\bibitem[BFL92]{BFL}
Yves Benoist, Patrick Foulon, and Fran{\c{c}}ois Labourie.
\newblock Flots d'{A}nosov \`a distributions stable et instable
  diff\'erentiables.
\newblock {\em J. Amer. Math. Soc.}, 5(1):33--74, 1992.

\bibitem[BMM08]{BMM}
Robert~L. Bryant, Gianni Manno, and Vladimir~S. Matveev.
\newblock A solution of a problem of {S}ophus {L}ie: normal forms of
  two-dimensional metrics admitting two projective vector fields.
\newblock {\em Math. Ann.}, 340(2):437--463, 2008.

\bibitem[CQB03]{CQ}
A.~Candel and R.~Quiroga-Barranco.
\newblock Gromov's centralizer theorem.
\newblock {\em Geom. Dedicata}, 100:123--155, 2003.

\bibitem[DG91]{DG}
G.~D'Ambra and M.~Gromov.
\newblock Lectures on transformation groups: geometry and dynamics.
\newblock In {\em Surveys in differential geometry ({C}ambridge, {MA}, 1990)},
  pages 19--111. Lehigh Univ., Bethlehem, PA, 1991.

\bibitem[Dum08]{Dumitrescu}
Sorin Dumitrescu.
\newblock Dynamique du pseudo-groupe des isom\'etries locales sur une
  vari\'et\'e lorentzienne analytique de dimension 3.
\newblock {\em Ergodic Theory Dynam. Systems}, 28(4):1091--1116, 2008.

\bibitem[Dum12]{sorin-locally}
Sorin Dumitrescu.
\newblock Locally homogeneous rigid geometric structures on surfaces.
\newblock {\em Geom. Dedicata}, 160:71--90, 2012.

\bibitem[Ehr36]{ehresmann}
Ehresmann.
\newblock Sur les espaces localement homog\`enes.
\newblock {\em Enseign. Math.}, 35:317--333, 1936.

\bibitem[Fer02]{Feres}
Renato Feres.
\newblock Rigid geometric structures and actions of semisimple {L}ie groups.
\newblock In {\em Rigidit\'e, groupe fondamental et dynamique}, volume~13 of
  {\em Panor. Synth\`eses}, pages 121--167. Soc. Math. France, Paris, 2002.

\bibitem[Fis11]{Fisher-zimmer}
David Fisher.
\newblock Groups acting on manifolds: around the {Z}immer program.
\newblock In {\em Geometry, rigidity, and group actions}, Chicago Lectures in
  Math., pages 72--157. Univ. Chicago Press, Chicago, IL, 2011.

\bibitem[Gro88]{Gro}
Michael Gromov.
\newblock Rigid transformations groups.
\newblock In {\em G\'eom\'etrie diff\'erentielle ({P}aris, 1986)}, volume~33 of
  {\em Travaux en Cours}, pages 65--139. Hermann, Paris, 1988.

\bibitem[Inc44]{ince}
E.~L. Ince.
\newblock {\em Ordinary {D}ifferential {E}quations}.
\newblock Dover Publications, New York, 1944.

\bibitem[KN96]{kobayashi-nomizu}
Shoshichi Kobayashi and Katsumi Nomizu.
\newblock {\em Foundations of differential geometry. {V}ol. {I}}.
\newblock Wiley Classics Library. John Wiley \& Sons Inc., New York, 1996.
\newblock Reprint of the 1963 original.

\bibitem[KOV04]{koo}
Old{\v{r}}ich Kowalski, Barbara Opozda, and Zden{\v{e}}k Vl{\'a}{\v{s}}ek.
\newblock A classification of locally homogeneous connections on 2-dimensional
  manifolds via group-theoretical approach.
\newblock {\em Cent. Eur. J. Math.}, 2(1):87--102 (electronic), 2004.

\bibitem[Lie80]{Lie}
Sophus Lie.
\newblock Theorie der {T}ransformationsgruppen {I}.
\newblock {\em Math. Ann.}, 16(4):441--528, 1880.

\bibitem[Mel09]{Melnick}
Karin Melnick.
\newblock Compact {L}orentz manifolds with local symmetry.
\newblock {\em J. Differential Geom.}, 81(2):355--390, 2009.

\bibitem[Nom60]{Nomizu}
Katsumi Nomizu.
\newblock On local and global existence of {K}illing vector fields.
\newblock {\em Ann. of Math. (2)}, 72:105--120, 1960.

\bibitem[Opo04]{Opozda}
Barbara Opozda.
\newblock Locally homogeneous affine connections on compact surfaces.
\newblock {\em Proc. Amer. Math. Soc.}, 132(9):2713--2721 (electronic), 2004.

\bibitem[PTV96]{Tri}
Friedbert Pr{\"u}fer, Franco Tricerri, and Lieven Vanhecke.
\newblock Curvature invariants, differential operators and local homogeneity.
\newblock {\em Trans. Amer. Math. Soc.}, 348(11):4643--4652, 1996.

\bibitem[Sha97]{Sharpe}
R.~W. Sharpe.
\newblock {\em Differential geometry}, volume 166 of {\em Graduate Texts in
  Mathematics}.
\newblock Springer-Verlag, New York, 1997.
\newblock Cartan's generalization of Klein's Erlangen program, With a foreword
  by S. S. Chern.

\bibitem[Spi70]{spivak}
Michael Spivak.
\newblock {\em A comprehensive introduction to differential geometry. {V}ol.
  {II}}.
\newblock Published by M. Spivak, Brandeis Univ., Waltham, Mass., 1970.

\bibitem[Sza98]{Szaro}
John~P. Szaro.
\newblock Isotropy of semisimple group actions on manifolds with geometric
  structure.
\newblock {\em Amer. J. Math.}, 120(1):129--158, 1998.

\bibitem[Thu97]{thurston}
William~P. Thurston.
\newblock {\em Three-dimensional geometry and topology. {V}ol. 1}, volume~35 of
  {\em Princeton Mathematical Series}.
\newblock Princeton University Press, Princeton, NJ, 1997.
\newblock Edited by Silvio Levy.

\bibitem[Wol67]{Wolf}
Joseph~A. Wolf.
\newblock {\em Spaces of constant curvature}.
\newblock McGraw-Hill Book Co., New York, 1967.

\bibitem[Zeg96]{Zeghib}
Abdelghani Zeghib.
\newblock Killing fields in compact {L}orentz {$3$}-manifolds.
\newblock {\em J. Differential Geom.}, 43(4):859--894, 1996.

\end{thebibliography}
\bibliographystyle{alpha}

\end{document}